\newcommand{\ueq}[1][]{%
  \if\relax\detokenize{#1}\relax
    \sbox0{$\underbrace{=}_{}$}%
    \mathrel{\mathmakebox[\wd0]{=}}
  \else
    \mathrel{\underbrace{=}_{\mathclap{#1}}}
  \fi}
\newcommand {\ctn}{\cite}
\newtheorem{theorem}{Theorem}
\newtheorem{corollary}[theorem]{Corollary}
\newtheorem{remark}[theorem]{Remark}
\begin{document}

\title{\vspace{-0.8in}
\textbf{Bayes Meets Riemann Again: Large Prime Discovery and Re-emergence of the Bone of Contention}}
\author{Durba Bhattacharya$^{\dag}$, Sucharita Roy$^{\dag}$ and Sourabh Bhattacharya$^{\ddag,+}$}
\date{\vspace{-0.5in}}
\maketitle%
\begin{center}
$^{\dag}$ St. Xavier's College\\
$^{\ddag}$ Indian Statistical Institute\\
$+$ Corresponding author:  \href{mailto: bhsourabh@gmail.com}{bhsourabh@gmail.com}
\end{center}


\begin{abstract}
Prime numbers have fascinated mathematicians since antiquity.
Obsession about primes continue even to this day, not only for unravelling
mathematical secrets about their properties, but also for discovering larger and larger prime numbers. The advent of supercomputing facilities have
inspired collaborations among thousands of mathematicians and computer scientists resulting in enormous projects for large prime-hunting. Although
primes form an integral part of encryption/decryption, giant prime numbers are seldom useful in this regard. However, a relatively new field of computer science,
namely, ``Locally Decodable Codes", seems to require applications of very large prime numbers. 


Beneath the practical aspects of prime-hunting, which is largely a brute-force and financially expensive supercomputing exercise, lies the abstract elegance of the 
greatest ever unsolved mathematical conjecture -- the Riemann Hypothesis, intimately tied to the well-established prime number theorem.
The latter establishes prime distribution as essentially random. This perspective
motivates statistical approaches, and the rigour of the Bayesian paradigm, which is also foundationally miles ahead of its frequentiest counterpart, makes it the natural
choice.


We show that the prime number theorem suggests a nonhomogeneous Poisson process for
prime counts, yielding primes as waiting times. We prove that this process is almost surely consistent with the prime
number theorem, asymptotics for large primes, and prime gap results. Building on it, we
develop a fast recursive Bayesian theory for large prime prediction and Riemann Hypothesis
validation, proving it agrees with the traditional but computationally infeasible non-recursive
	Bayesian framework in the limit. Crucially, but not surprisingly (given our past work), our Bayesian approach emphatically falsifies the
Riemann Hypothesis.


We further design a prime-hunting method by simulating from recursive posterior predictive distributions via Transformation-based MCMC. Importantly, 
a simple change-of-variable enables simulation of candidate Mersenne prime exponents. Requiring only modest computing resources, our method has identified 
259 primes exceeding 140 million, of which 184 are strong Mersenne exponent candidates. These correspond to potential Mersenne primes with
42--242 million digits.
\\[2mm]
{\bf Keywords:} 
	{\it Mersenne prime; Nonhomogeneous Poisson process; Prime number theorem; Recursive Bayesian statistics; Riemann Hypothesis; Transformation based 
	Markov Chain Monte Carlo.}
\end{abstract}

\section{Introduction}
\label{sec:intro}

Human curiosity and sense of adventure are the driving forces behind great scientific and mathematical discoveries. It is not necessary that
all such adventures would lead to coveted treasures of great practical values, since the fruits of the adventure of ``thought" need not immediately spill over to the 
practical world. This is often true of pure mathematics, where the practical significance of mathematical discoveries are often not considered important.
Rather, the tree bearing the fruit of discovery, indeed, the method leading to the discovery, is considered to be the holy one, in wistful anticipation of more fruits. 
Agrees G. H. Hardy:{\it ``Pure mathematics is on the whole distinctly more useful than applied. For what is useful above all is technique, and mathematical technique
is taught mainly through pure mathematics."}

Prime numbers, the key ingredients of number theory, have mystified pure mathematicians since time immemorial. A nice account of the obsession for prime numbers
can be found in \ctn{Derby03}, particularly, in connection with Riemann Hypothesis (RH), the greatest ever unsolved conjecture in pure mathematics. 
Somewhat surprisingly, however, although it was known since the ancient ages that the number of prime numbers is infinite, there is an increasing obsession
for discovering larger and larger prime numbers. The efforts for finding larger and larger prime numbers seem to echo a popular Bengali film dialogue: ``{\it janar kono 
sesh naai, janar chesta britha taai}". That is, fruitless is the pursuit of knowledge that is endless!

The craze for large prime numbers, even in the current age, is evident from the announcement of large sum of prize money for discovering large prime numbers
by the Electronic Frontier Foundation (EFF). In $1999$, the discoverer of a prime number with one million digits was awarded 
US\$50,000. In 2008, a ten-million-digit prime did not only win a US\$100,000 prize money, but also a Cooperative Computing Award from the EFF. 
Furthermore, this prime number came to be known as the 29-th top invention of 2008.

Explosion of the computing technology has probably played the major role in contributing to this prime craze. Indeed, both of these large prime numbers have been discovered
by means of tens of thousands of computers and thousands of researchers, in association with a project called the Great Internet Mersenne Prime Search (GIMPS). 
As per the records, 
a further US\$250,000 prize has been announced for the first prime with at least one billion digits, and
a US\$3,000 research discovery award for discovering a new Mersenne prime of less than 100 million digits. Note that a Mersenne prime if of the form $2^n-1$,
for some positive integer $n$. If $n$ is composite, then so is $2^n-1$; hence, equivalently, a Mersenne prime is of the form $2^p-1$, where $p$ is a prime number.

The largest known prime number, as of October $12$, $2024$, is $2^{136,279,841}-1$, which is the $52$-th Mersenne prime number having $41,024,320$ digits.
This was discovered on a network of thousands of dedicated graphics units, supervised by an ex-Nvidia employee, Luke Durant. Interestingly, Durant spent about one year
and 2 million US dollars on this project -- the amount spent far exceeding any prize money announced so far for discovering large prime numbers!

But is the quest for large primes and the offer of large sums of prize money just a nonsensical craze? 
The importance of large prime numbers in secure encryption/decryption is undeniable, after the creation of the RSA (Ron Rivest, 
Adi Shamir and Leonard Adleman) algorithm in $1977$. The key concept driving this algorithm is that, multiplication of two large prime numbers, although
perhaps tedious, produces a large enough number in any reasonable computer, without requiring much computer resources. However, prime factorization of the large
number produced is a highly challenging task, even for the best of computers. The larger the prime numbers, tighter the security. However, the largest prime numbers 
discovered, which are made public, are not used in RSA algorithms as these might possess security threat. But if it becomes possible to generate larger and larger prime numbers
with affordable ease, then of course, ``{\it janar kono sesh naai, janar chesta britha taai}" should perhaps be replaced with ``{\it janar kono sesh naai, tobu janar chesta 
chalao bhai}" (keep on learning brother, even though there is no end to knowledge!)   
Moreover, in the emerging area of computer science ``Locally Decodeable Codes",  new Mersenne primes seem to have immediate applications; see, for example,
\url{http://cgis.cs.umd.edu/~gasarch/TOPICS/pir/threepir.pdf}.

Associated with prime numbers, as already mentioned, is the greatest unsolved mathematical conjecture of all time -- namely, RH.
Although most mathematicians around the word believe it to be true, so far, every attempt to prove the truth of the hypothesis turned out to be erroneous. 
A relatively recent investigation, using characterization of infinite series using Bayesian statistical theory, utterly surprisingly, yielded the negative result, 
that the conjecture is highly likely to be false; see \ctn{SRoy20} and \ctn{Roy23}. Lucid discussion around the result, along with the thoughts of prominent mathematicians
of the past, are provided in \ctn{SB22}. Proposition of a novel and efficient methodology for discovering large primes, that also allows further investigation of 
RH embedded in the same methodology, would be in keeping with G. H. Hardy's philosophy discussed in the beginning -- upholding the methodological 
tree that promises to bear the ripe fruits of success. 

However, we wish to push the philosophy of Hardy a bit further. We do not wish to allow the new method to become trapped within the realms of abstract mathematics, however
elegant. We demand that the methodology discovers new and larger primes extremely fast computationally and without any financial investment at all, unlike the 
case of the latest and largest prime discovered.

With such a goal that attempts to bridge both pure mathematical and practical aspects, we propose a novel Bayesian theory to capture new and large prime numbers
and to address RH, woven into the same theory. Beginning with the basic prime number theorem (PNT), we arrive at a nonhomogeneous Poisson process
for modeling prime counts. The waiting time distribution then describes the distribution of the prime numbers. As we shall point out, the probabilistic concept is inherent
in the basic prime number theory, making statistical approaches most appropriate for prime investigation. 

The philosophy of probabilistic approaches in pure mathematics is likely to infuriate pure mathematicians! However, escape from randomness, even in pure
mathematics, is impossible. Indeed, the number of computable reals is only countable, while there exist uncountably many uncomputable numbers, 
among which, at least the value of the so-called ``halting probability" 
is completely random in the actual probabilistic sense! The number is also {\it maximally unknowable}; see \ctn{SB23} for an account of this discovery by Gregory Chaitin. 
In the words of its discoverer:
``{\it This is a place where God plays dice. I don’t know if any of you remember the dispute many years
ago between Neils Bohr and Albert Einstein about quantum mechanics. Einstein said, `God doesn't
play dice!', and Bohr said, `Well, He does in quantum mechanics!' I think God also plays dice in
pure mathematics}".

Returning to our stochastic approach to prime numbers, it will be seen that the more precise PNT is also associated with RH, which
we incorporate in our nonhomogeneous Poisson process (NHPP) model. We formally establish that our NHPP is almost surely consistent with respect to the PNT; we further 
establish its asymptotic equivalence with large prime numbers and also prove a fundamental asymptotic result related to prime gaps, directly using our NHPP.
We then consider priors on the parameters associated with the basic PNT and RH parts. 
Posterior distributions of these two parameters would then provide reliable validation of the basic PNT and RH, when the conditioned 
number of primes are theoretically taken to infinity. As will be shown, our asymptotic results are considerably robust with respect to prior choice. 

However, given the first $k$ prime numbers, the traditional Bayesian approach would include $2^k$ terms in the posteriors of the parameters and $2^{k+1}$ terms in the 
posterior predictive distribution for capturing new primes.
To illustrate, the posterior predictive distribution 
given the first one million primes, would involve of the order of $2^{\mbox{million}}$ terms, 
which is clearly infeasible to deal with. 
Instead, we create a recursive Bayesian theory where the posterior of the parameters at $k$-the stage is essentially taken as the prior for the same at the $(k+1)$-th
stage. With this recursive prior, the posterior of the parameters for the $(k+1)$-th stage is computed conditional on the $k$-th prime only. This recursive posterior
and the associated recursive posterior predictive distribution consist of only two and four terms, respectively, for any $k$! We further show that these recursive distributions 
asymptotically yield the Same Bayesian inference as the original non-recursive posteriors and posterior predictive distributions.

We show in particular that our Bayesian theory always supports PNT, which is already mathematically established.
But interestingly, and most importantly, RH always turned out to be false with respect to our Bayesian asymptotic approaches. Our results hold for various choices
of the priors, including informative and noninformative, signifying robustness of our approach and results. As we show, our theoretical investigation also allows us to
strengthen the PNT in its general form proven in the past and essentially supports the most precise version of the PNT proven recently.

For the prime-discovery purpose, we 
propose the Transformation based Markov Chain Monte Carlo (TMCMC)  (\ctn{Dutta14}) to generate realizations from the recursive
posterior predictive distribution, suitably approximated and re-parameterized for large primes. In particular, we propose a mixture of additive and multiplicative 
transformations for TMCMC, which is capable of making local moves
as well as large jumps; see \ctn{Dey16} for the details; see also \ctn{Dutta12}. For each realization generated by such TMCMC, we check the primality using an 
efficient prime-checking algorithm 
for Mersenne primes. Any positive result would yield a new and larger prime number. Note that TMCMC convergence diagnostics are unnecessary here, since we are 
not interested in learning the posterior predictive {\it per se}, the closed form of which we are able to derive anyway, but only to detect new and large prime numbers.
Most of the simulations are expected to represent the high-density regions of the recursive posterior predictive distributions, so that the effective sample size is not expected
to be large, for finding new primes. 

Most importantly, a simple change-of-variable of our recursive posterior predictive distribution, facilitates discovering, via TMCMC,
strong candidates for Mersenne prime exponents. In fact, implementation of our theory and methods yielded $184$ potential Mersenne prime exponents, among a pool
of $259$ primes, all greater than $140$ million, which are not yet considered by GIMPS. The number of digits of our potential Mersenne primes 
range from $42,141,405$ to $242,429,718$.

The roadmap of our contributions is given as follows. We begin in Section \ref{sec:motivation} by motivating NHPP for the distribution of prime counts from the PNT.
Formally, we introduce the NHPP and the resultant distribution of the prime numbers in Section \ref{sec:nhpp}. 
In Section \ref{sec:embedding} we formally establish almost sure consistency of our NHPP with respect to some fundamental results on prime numbers. 
In Section \ref{sec:recursive} we introduce our key principles for recursive Bayesian theory for prime prediction and present the details in Section \ref{sec:recursive_details}.
Details of the asymptotic convergence (in probability) of the parameters of our model, with respect to the recursive posterior distributions, with particular focus
on falsification of RH, are provided in Section \ref{sec:con}; in this section we also validate the already established basic PNT.
Emphatic falsification of RH with another version of PNT is detailed in Section \ref{sec:another}.
Using our recursive Bayesian theory, we derive an upper bound for the prime counting function, the details of which are provided in Section \ref{eq:upper_bound}, which is,
in fact, smaller than that given by the traditional PNT. Sharper bounds with our recursive Bayesian theory, depending upon established precise bounds, are discussed
in Section \ref{sec:sharp}. In Section \ref{sec:compare}, we compare relevant bounds approved by our recursive Bayesian theory via an asymptotic model comparison framework
via ratio of recursive posterior predictive densities.
We introduce traditional, non-recursive Bayesian theory for primes in Section \ref{sec:rec_nonrec} and establish its inference-wise asymptotic equivalence with the
recursive Bayesian counterpart. This section also includes establishment of asymptotic equivalence of model selection results based on non-recursive and 
recursive Bayesian theories. In Section \ref{sec:disco_primes} we propose our method of discovering large Mersenne primes using our recursive posterior predictive distributions
and TMCMC, providing details of the experiments and the potential Mersenne primes discovered. Finally, we summarize our contributions and 
make concluding remarks in Section \ref{sec:conclusion}.

\section{The PNT motivation behind NHPP for the prime counts}
\label{sec:motivation}

Let $\varphi(x)$ denote the deterministic prime counting function, that is, the number of primes less than or equal to $x$, 
For more general notation, for any Borel set $A\in\mathfrak B(\mathbb R^+)$, let $\mathcal N(A)$ denote the number of primes in $A$. 
Here $\mathbb R^+=(0,\infty)$ and $\mathfrak B(\mathbb R^+)$ is the
Borel sigma-algebra of $\mathbb R^+$.

Then it is well-known from the prime number theorem (PNT), that as $x\rightarrow\infty$,
\begin{equation*}
	\varphi(x)=\mathcal N((0,x])\sim \frac{x}{\log x},
\end{equation*}
where, for any two functions $f_1(x)$ and $f_2(x)$, $f_1(x)\sim f_2(x)$ stands for $\frac{f_1(x)}{f_2(x)}\rightarrow 1$, as $x\rightarrow\infty$.
It follows that the probability of a prime in $(0,x]$ is
\begin{equation}
	\frac{\varphi(x)}{[x]}=\frac{\mathcal N((0,x])}{[x]}\sim\frac{1}{\log x},
	\label{eq:eq2}
\end{equation}
where $[x]$ denotes the largest integer not exceeding $x$.
Observe that (\ref{eq:eq2}) explicitly shows that probabilistic approaches to dealing with prime numbers, albeit themselves deterministic, are most welcome,
even from the pure mathematical perspective.

Now, for $i=1,\ldots,N$, let
\begin{equation*}
	X_i=\left\{\begin{array}{cc}1 & \mbox{if}~i~\mbox{is prime}\\ 0 & \mbox{otherwise}\end{array}\right.
\end{equation*}
It then follows from (\ref{eq:eq2}) that the mean and variance of $X_i$ are given by
\begin{align}
	&E(X_i)=P(X_i=1)\sim\frac{1}{\log N};\notag\\ 
	&Var(X_i)\sim\frac{1}{\log N}\left(1-\frac{1}{\log N}\right),\notag 
\end{align}
from which it follows that if $X=\sum_{i=1}^NX_i$, then
\begin{align}
	E(X)&\sim\frac{N}{\log N};\notag\\ 
	Var(X)&\sim\frac{N}{\log N}\left(1-\frac{1}{\log N}\right).\notag 
\end{align}

Now let $Y_k$ denote the number of primes between $N_k$ and $N_{k+1}$. Then 
\begin{equation}
	Y_k=\sum_{i=N_k}^{N_{k+1}}X_i\label{eq:eq8}
\end{equation}
and it follows from the above developments that, when $N_k\rightarrow\infty$ as $k\rightarrow\infty$,
\begin{align}
	E(Y_k)&\sim\sum_{i=N_k}^{N_{k+1}}\frac{1}{\log i};\notag\\ 
	Var(Y_k)&\sim\sum_{i=N_k}^{N_{k+1}}\frac{1}{\log i}\left(1-\frac{1}{\log i}\right).\notag 
\end{align}
Now note that
\begin{equation*}
	\left(1-\frac{1}{\log N_{k+1}}\right)\sum_{i=N_k}^{N_{k+1}}\frac{1}{\log i} < \sum_{i=N_k}^{N_{k+1}}\frac{1}{\log i}\left(1-\frac{1}{\log i}\right)
	< \left(1-\frac{1}{\log N_k}\right)\sum_{i=N_{k}}^{N_{k+1}}\frac{1}{\log i}.
\end{equation*}
from which it follows that when $N_k\rightarrow\infty$, as $k\rightarrow\infty$,
\begin{equation}
E(Y_k)\sim Var(Y_k),~\mbox{as}~k\rightarrow\infty.
	\label{eq:po2}
\end{equation}
Also note that, $\sum_{i\in\mathcal S_1}X_i$ and $\sum_{i\in\mathcal S_2}X_i$ are independent whenever the sets $\mathcal S_1$ and $\mathcal S_2$ are disjoint.
This property, in conjunction with (\ref{eq:po2}), strongly suggests a Poisson process model for the prime counts, Since $E(Y_k)$ and $Var(Y_k)$ do not depend only upon
the difference $N_{k+1}-N_k$, the suggested Poisson process is nonhomogeneous. Interestingly and perhaps quite importantly, we establish that the PNT is embedded
in an appropriate nonhomogeneous Poisson process (see Theorems \ref{theorem:embedding1} and \ref{theorem:embedding2}).

%

\subsection{The more precise PNT and RH}
\label{subsec:precise_PNT}
The more precise PNT says that 
\begin{equation}
	\varphi(x)=\int_0^x\frac{1}{\log t}dt+O\left(x^r\log x\right),~\mbox{as}~x\rightarrow\infty,\label{eq:eq11}
\end{equation}
where $r$ is the upper bound of the real parts of the zeros of Riemann's zeta function. It is known that $0\leq r\leq 1$ (see, for example, \ctn{Ingham1932}). 
The integral on the RHS of (\ref{eq:eq11}) is referred to as $Li(x)$ and we denote $\frac{1}{\log x}$ as $li(x)$. 

Importantly, \ctn{Koch1901} proved that if RH is true, then $r=1/2$ in (\ref{eq:eq11}). It follows that if (\ref{eq:eq11}) fails to hold
with $r=1/2$, then RH must be false.

We shall next consider modeling the prime counts as an appropriate Poisson process with intensity measure composed of the $Li$ function and the function
$\sqrt{x}\log x$. Our goal is not only to capture large prime numbers, but also to verify RH in light of the result of \ctn{Koch1901} and to
provide new bounds for the prime counting function, if RH fails to hold with respect to our theory.

\section{An NHPP for prime counts}
\label{sec:nhpp}
Consider an NHPP $\mathcal N(\cdot)$ with intensity measure
\begin{equation}
	\Lambda_{\alpha,\beta}(A)=\alpha\int_A li(u)du+\beta\int_A f(u)du,
	\label{eq:intensity_measure}
\end{equation}
for any set $A\in\mathfrak B\left(\mathbb R^+\right)$, where $\alpha>0$, $\beta>0$. In (\ref{eq:intensity_measure}), in keeping with the error term 
in (\ref{eq:eq11}) and RH, 
we consider the function $f(\cdot)$ to be such that
$$\int_0^x f(t)dt = \sqrt{x}\log x,$$ so that it follows by differentiation that
\begin{equation}
f(x)=\frac{1}{\sqrt{x}}\left(\log\sqrt{x}+1\right).\label{eq:eq12}
\end{equation}

However, we shall set $\mathcal N([0,2))=0$ almost surely, so that the above intensity function will be supported on $t\geq 2$. In other words, we shall
consider an NHPP on $[2,\infty)$. This is because $0$ and $1$ are usually not counted as prime numbers. 
Thus, we re-define the $Li(x)$ function as
\begin{equation}
	Li(x)=\int_2^xli(u)du.
	\label{eq:Li2}
\end{equation}
Note that this definition also removes the singularity at $1$ in the original definition.

The proof of Theorem \ref{theorem:embedding1} shows that $$\mathcal N([2,x])\sim \Lambda_{\alpha,\beta}([2,x]),~\mbox{almost surely, as}~x\rightarrow\infty,$$
showing that $\Lambda_{\alpha,\beta}([2,x])$ is the asymptotic equivalent of the prime counting function outside the null set.
Now, since the difference between the prime counting function and the $Li$ function changes sign infinitely many times 
(\ctn{Little14}, \ctn{Skewes33}), the aforementioned result shows that we must allow $\Lambda_{\alpha,\beta}((0,x])$ to be greater than or less than $Li(x)$ 
infinite number of times, as $x$ is increased. Indeed, as we shall point out subsequently, for the purpose of inference with a novel recursive Bayesian theory,
it would make sense to allow $\alpha$ and $\beta$ to depend upon the stages of the recursion as we proceed sequentially with the ascending order of the prime numbers. 
Thus, for every prime number, these stage-wise parameters are allowed to increase or decrease adaptively to take account of larger or smaller values of the intensity measure
in comparison with the $Li$ function. Even with the traditional, non-recursive Bayesian approach, the posteriors of the parameters induce the same effect
as the number of primes, conditioned upon, increases.

Further, as we shall show, our asymptotic theory does not depend upon the particular form of the function $f(\cdot)$ chosen in the intensity measure, 
but only on the order $O(f(\cdot))$, subject to positivity of the intensity measure. Thus, if the parameters $\alpha$ and $\beta$ converge to $1$ with 
respect to their posterior distributions, in some suitable sense,
then the resultant intensity measure would give the asymptotic upper bound for the intensity measure, which acts as a suitable proxy for the prime counting function.

With (\ref{eq:eq12}), the intensity function for our NHPP is then of the form
\begin{equation}
	\lambda_{\alpha,\beta}(t)=\alpha li(t)+\beta f(t).
	\label{eq:intensity1}
\end{equation}

\subsection{Distribution of the prime numbers as the analogue of the waiting time distribution}
\label{subsec:waiting_time}
Let $Z_1$ denote the waiting time till the first event. Then
\begin{equation}
	P(Z_1>t)=P(\mathcal N([2,t])=0)=\exp\left\{-\Lambda_{\alpha,\beta}([2,t])\right\},\label{eq:w1}
\end{equation}
where
\begin{equation}
	\Lambda_{\alpha,\beta}=\int_0^t\lambda_{\alpha,\beta}(u)du=\alpha\int_2^t\frac{1}{\log u}du+\beta\int_2^t\frac{1}{\sqrt{u}}\left(\log u+1\right)du.\label{eq:w2}
\end{equation}
It follows that the density of $Z_1$ is given by
\begin{align}
	f_{Z_1,\alpha,\beta}(t)&=\exp\left\{-\Lambda_{\alpha,\beta}([2,t])\right\}\Lambda'_{\alpha,\beta}([2,t])\notag\\
	&=\exp\left\{-\Lambda_{\alpha,\beta}([2,t])\right\}\left(\alpha\frac{1}{\log t}+\beta\frac{\log t+1}{\sqrt{t}}\right).\label{eq:z1}
\end{align}
Now let $s\geq 2$, $t>0$ and $\tilde Z_2$ be the inter-arrival time. Then
\begin{align}
	P\left(\tilde Z_2>t|Z_1=s\right)&=P\left(\mathcal N((s,s+t])=0|Z_1=s\right)\notag\\
	&=P\left(\mathcal N((s,s+t]))=0\right)\notag\\
	&=\exp\left\{-\Lambda_{\alpha,\beta}((s,s+t])\right\}.\label{eq:w4}
\end{align}
The density of $\tilde Z_2$ is then given by
\begin{align}
	f_{\tilde Z_2,\alpha,\beta}(t)&=\exp\left\{-\Lambda_{\alpha,\beta}((s,s+t])\right\}\Lambda'_{\alpha,\beta}((s,s+t])\notag\\
	&=\exp\left\{-\Lambda_{\alpha,\beta}((s,s+t])\right\}\left(\alpha\frac{1}{\log (s+t)}+\beta\frac{\log (s+t)+1}{\sqrt{s+t}}\right).\label{eq:z2_tilde}
\end{align}
Now, if $Z_2$ is the waiting time till the second event, given $Z_1=s$, then $Z_2=\tilde Z_2+s$ has the density
\begin{equation}
	f_{Z_2,\alpha,\beta}(t|Z_1=s)=\exp\left\{-\Lambda_{\alpha,\beta}((s,t])\right\}\left(\alpha\frac{1}{\log t}+\beta\frac{\log t+1}{\sqrt{t}}\right);~t>s.\label{eq:z2}
\end{equation}
In general, given $Z_k=t_k$, the density of $Z_{k+1}$ is given by
\begin{equation}
	f_{Z_{k+1},\alpha,\beta}(t|Z_k=t_k)=\exp\left\{-\Lambda_{\alpha,\beta}((t_k,t])\right\}\left(\alpha\frac{1}{\log t}+\beta\frac{\log t+1}{\sqrt{t}}\right);~t>t_k.\label{eq:zk}
\end{equation}

\section{Almost sure consistency of our NHPP with respect to PNT and prime gaps}
\label{sec:embedding}

\begin{theorem}
	\label{theorem:embedding1}
	Our NHPP with $\alpha=1$ and any given $\beta>0$, satisfies
	\begin{equation}
		\mathcal N([2,x])\sim\frac{x}{\log x},~\mbox{almost surely, as}~x\rightarrow\infty.
		\label{eq:embedding1}
	\end{equation}
\end{theorem}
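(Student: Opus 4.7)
The plan is to reduce the claim to two independent pieces: an analytic asymptotic for the intensity measure $\Lambda_{1,\beta}([2,x])$, and a probabilistic strong law for the Poisson process $\mathcal N([2,x])$ driven by that intensity. Combining them gives the stated a.s. equivalence.

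First I would write, using (\ref{eq:w2}) with $\alpha=1$,
\begin{equation*}
\Lambda_{1,\beta}([2,x]) \;=\; Li(x) \;+\; \beta\bigl(\sqrt{x}\log x - \sqrt{2}\log 2\bigr),
\end{equation*}
and establish the two elementary asymptotics
\begin{equation*}
Li(x) \sim \frac{x}{\log x}, \qquad \sqrt{x}\log x \;=\; o\!\left(\frac{x}{\log x}\right), \quad x\to\infty.
\end{equation*}
The first follows by one integration by parts ($Li(x)=x/\log x + \int_2^x du/\log^2 u$, with the integral of lower order), and the second is immediate since $\log^2 x/\sqrt{x}\to 0$. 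Together these give $\Lambda_{1,\beta}([2,x]) \sim x/\log x$ for any fixed $\beta>0$, independently of the probabilistic part of the argument. This step is routine.

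Next, and this is the step that really does the work, I would prove the SLLN-type statement
\begin{equation*}
\frac{\mathcal N([2,x])}{\Lambda_{1,\beta}([2,x])} \;\longrightarrow\; 1 \quad \text{almost surely as } x\to\infty.
\end{equation*}
The standard route is a time change: since $\Lambda_{1,\beta}([2,\cdot])$ is continuous, strictly increasing, and diverges to $\infty$, we can write $\mathcal N([2,x]) = M(\Lambda_{1,\beta}([2,x]))$, where $M$ is a unit rate Poisson process on $[0,\infty)$. The classical strong law for $M$, namely $M(t)/t\to 1$ a.s., then transfers directly. If one prefers a self-contained argument, take integer $n$, let $Y_n = \mathcal N([n,n+1])$, which are independent Poisson with means $\mu_n = \Lambda_{1,\beta}([n,n+1])$ satisfying $\sum\mu_n=\infty$; an application of Kolmogorov's SLLN for independent (non-identically distributed) summands with $\sum_n \mathrm{Var}(Y_n)/(\sum_{k\le n}\mu_k)^2 < \infty$ yields $\sum_{k\le n} Y_k / \sum_{k\le n}\mu_k \to 1$ a.s., and monotonicity of $\mathcal N([2,x])$ in $x$ lets one interpolate between integers.

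Finally, a one-line combination completes the proof:
\begin{equation*}
\frac{\mathcal N([2,x])}{x/\log x}
= \frac{\mathcal N([2,x])}{\Lambda_{1,\beta}([2,x])}\cdot\frac{\Lambda_{1,\beta}([2,x])}{x/\log x} \;\longrightarrow\; 1\cdot 1 = 1\quad\text{a.s.}
\end{equation*}
The only real obstacle is the SLLN step: making sure the exceptional null set can be taken independent of the continuous parameter $x$, and that the variance sum condition is checked for the chosen block decomposition. The time-change viewpoint sidesteps both issues cleanly, which is why I would present the argument in that form.
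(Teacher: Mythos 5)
Your proof is correct, but it takes a genuinely different route from the paper's. The paper controls $\mathcal N([2,x])/\Lambda_{\alpha,\beta}([2,x])$ directly via a fourth-moment bound: it applies Markov's inequality with the fourth central moment of a Poisson variable ($3\Lambda^2+\Lambda$), obtains a tail bound of order $(\log n)^2/n^2$ along integers $n$, sums it, and invokes Borel--Cantelli. You instead decouple the probabilistic and analytic parts: you isolate $\Lambda_{1,\beta}([2,x])\sim x/\log x$ as a routine calculus fact (integration by parts for $Li$, plus $\sqrt{x}\log x=o(x/\log x)$), and then obtain the strong law $\mathcal N([2,x])/\Lambda_{1,\beta}([2,x])\to 1$ a.s.\ by the standard time-change to a unit-rate Poisson process $M$ (or, alternatively, by blocking and Kolmogorov's SLLN for independent non-identically-distributed summands with $\sum_n\mathrm{Var}(Y_n)/(\sum_{k\le n}\mu_k)^2<\infty$). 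Both arguments are valid; the trade-off is instructive. The paper's moment-plus-Borel--Cantelli argument is elementary and self-contained, and --- importantly --- it is the template the authors reuse in Theorem~\ref{theorem:pgap1}, where the interval $(x,x+x^\theta]$ has much smaller expected mass and a fifth moment is needed to make the tail summable, so keeping the moment machinery in play is a deliberate choice. Your time-change argument is cleaner in one respect the paper leaves implicit: Borel--Cantelli as applied in the paper delivers a.s.\ convergence only along the integer skeleton $x=n$, and the interpolation to continuous $x$ (using monotonicity of $\mathcal N$ and $\Lambda$ and the slow variation of $\Lambda$) is needed but not written out, whereas the time-change formulation $\mathcal N([2,x])=M(\Lambda_{1,\beta}([2,x]))$ with $M(t)/t\to 1$ a.s.\ handles the continuous parameter and the single exceptional null set at once, as you note. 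One small thing to keep in mind if you were to use the block version: you would still need the interpolation step, so the time-change formulation really is the cleaner of your two variants.
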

\begin{proof}
	Since $E\left[\mathcal N([2,x])\right] =\Lambda_{\alpha,\beta}([2,x])$, we have, by Markov's inequality, for any $\epsilon>0$,
	\begin{align}
		&P\left(\left|\frac{\mathcal N([2,x])}{\Lambda_{\alpha,\beta}([2,x])}-1\right|>\epsilon\right)<
		\epsilon^{-4}E\left[\frac{\mathcal N([2,x])}{\Lambda_{\alpha,\beta}([2,x])}-1\right]^4\notag\\
		&\qquad =\epsilon^{-4}\frac{\left\{3\Lambda_{\alpha,\beta}([2,x])\right\}^2+\Lambda_{\alpha,\beta}([2,x])}{\left[\Lambda_{\alpha,\beta}([2,x])\right]^4}
		\sim\epsilon^{-4}\frac{3\alpha \left(\log x\right)^2}{x^2},~\mbox{as}~x\rightarrow\infty.
		\label{eq:em1}
	\end{align}
	Now, since
	$$\sum_{n=1}^{\infty}\frac{\left(\log n\right)^2}{n^2}<\infty,$$
	it follows from (\ref{eq:em1}) that
	$$\sum_{n=1}^{\infty}P\left(\left|\frac{\mathcal N([2,n])}{\Lambda_{\alpha,\beta}([2,n])}-1\right|>\epsilon\right)<\infty,$$
	and hence, due to the Borel-Cantelli lemma, it holds that
	$$ \mathcal N([2,x])\sim\Lambda_{\alpha,\beta}([2,x]),~\mbox{almost surely, as}~x\rightarrow\infty.$$
	Since $\Lambda_{\alpha,\beta}([2,x])\sim\frac{\alpha x}{\log x}$, it follows that
	\begin{equation}
		\mathcal N([2,x])\sim\alpha\frac{x}{\log x},~\mbox{almost surely, as}~x\rightarrow\infty.
		\label{eq:em2}
	\end{equation}
	Since (\ref{eq:em2}) holds given any $\alpha>0$, (\ref{eq:embedding1}) holds when $\alpha=1$.
\end{proof}

\begin{corollary}
	\label{cor:cor_pnt0}
	It follows from Theorem \ref{theorem:embedding1} and the PNT that
	our NHPP with $\alpha=1$ and any given $\beta>0$, satisfies
	\begin{equation*}
		\mathcal N([2,x])\sim\varphi(x),~\mbox{almost surely, as}~x\rightarrow\infty.
	\end{equation*}
	\end{corollary}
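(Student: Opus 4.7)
The plan is to prove the corollary by an elementary transitivity argument for asymptotic equivalence, combining Theorem \ref{theorem:embedding1} with the classical PNT. Since both $\mathcal{N}([2,x])$ and $\varphi(x)$ are asymptotically equivalent to the same deterministic function $x/\log x$, they must be asymptotically equivalent to each other; the only subtlety is keeping track of the ``almost surely'' qualifier, which appears on only one of the two limits.

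More concretely, first I would invoke Theorem \ref{theorem:embedding1} to fix a set $\Omega_0$ of full probability on which
\[
\frac{\mathcal{N}([2,x])}{x/\log x} \longrightarrow 1 \quad \text{as } x \to \infty.
\]
Next I would invoke the PNT, $\varphi(x) \sim x/\log x$, which is a deterministic statement and hence holds for every $\omega \in \Omega_0$. Then on this common full-measure set I would write the identity
\[
\frac{\mathcal{N}([2,x])}{\varphi(x)} = \frac{\mathcal{N}([2,x])}{x/\log x} \cdot \frac{x/\log x}{\varphi(x)},
\]
which is well defined for all sufficiently large $x$ since $\varphi(x) \to \infty$. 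Both factors converge to $1$, the first almost surely via Theorem \ref{theorem:embedding1} and the second surely via the PNT, so their product converges to $1$ almost surely, yielding $\mathcal{N}([2,x]) \sim \varphi(x)$ almost surely as $x \to \infty$.

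There is essentially no main obstacle here; the argument is one line once the two inputs are in hand. The only thing to be careful about is that the almost-sure exceptional null set is inherited from Theorem \ref{theorem:embedding1} alone (the PNT contributes nothing random), and that division by $\varphi(x)$ is legitimate for $x$ large enough. No further estimates, Borel--Cantelli applications, or properties of the intensity measure $\Lambda_{\alpha,\beta}$ are needed beyond what has already been established in the preceding theorem.
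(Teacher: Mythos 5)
Your proof is correct and is exactly the transitivity argument the paper implicitly relies on; the corollary is stated without proof precisely because it follows this immediately. Your care in noting that the exceptional null set comes only from Theorem \ref{theorem:embedding1} and that the PNT is deterministic is the right and complete bookkeeping.
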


\begin{remark}
	\label{ewmark:em1}
	We shall show that the posterior distribution of $\alpha$ converges to $1$ in probability, proving that our NHPP is consistent with  the PNT
	in this sense, whenever 
	$$\frac{\hat\beta_x\int_2^xf(u)du}{x/\log x}\rightarrow 0,~\mbox{as}~x\rightarrow\infty,$$
	where $\hat\beta_x$ denotes posterior expectation of $\beta$, to be clarified subsequently.
\end{remark}
	 Theorem \ref{theorem:embedding1} implies that $Z_N\sim N\log N$, almost surely, as $N\rightarrow\infty$, 
	formalized as Theorem \ref{theorem:embedding2}. 

\begin{theorem}
	\label{theorem:embedding2}
	With respect to our NHPP with $\alpha=1$ and any given $\beta>0$, 
	\begin{equation}
		Z_N\sim N\log N,~\mbox{almost surely, as}~N\rightarrow\infty.
		\label{eq:embedding2}
	\end{equation}
\end{theorem}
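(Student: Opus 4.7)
\medskip
\noindent\textbf{Proof plan.} The strategy is to invert the counting-process asymptotics of Theorem \ref{theorem:embedding1} to obtain waiting-time asymptotics. The fundamental identity I will exploit is that, by definition of the waiting times, $\mathcal N([2,Z_N])=N$ almost surely (the $N$-th arrival puts exactly $N$ events in the closed interval up to $Z_N$). Since the intensity $\lambda_{1,\beta}$ is integrable on every bounded interval of $[2,\infty)$, the NHPP is non-explosive and hence $Z_N\to\infty$ almost surely as $N\to\infty$. I will work on the almost sure event on which both $Z_N\to\infty$ and the conclusion of Theorem \ref{theorem:embedding1} hold; outside a null set everything below is deterministic.

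On that event, substituting $x=Z_N$ into (\ref{eq:embedding1}) gives
\begin{equation*}
N \;=\; \mathcal N([2,Z_N]) \;\sim\; \frac{Z_N}{\log Z_N}, \qquad \text{as } N\to\infty,
\end{equation*}
i.e.\ $Z_N\sim N\log Z_N$ almost surely. What remains is to replace $\log Z_N$ by $\log N$. I would do this by a short bootstrapping argument. For any $\varepsilon>0$ and all sufficiently large $N$, the relation above yields $Z_N\le (1+\varepsilon)N\log Z_N$, so
\begin{equation*}
\log Z_N \;\le\; \log N + \log\log Z_N + \log(1+\varepsilon).
\end{equation*}
This crude inequality already forces $\log Z_N = O(\log N)$ (since $\log\log Z_N$ is negligible compared with $\log Z_N$), which in turn gives $\log\log Z_N = O(\log\log N)$. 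Feeding this back into the displayed inequality, and using the symmetric lower bound $Z_N\ge (1-\varepsilon)N\log Z_N\ge (1-\varepsilon)N$ (so $\log Z_N\ge \log N + \log(1-\varepsilon)$), I get $\log Z_N = \log N + O(\log\log N)$, so $\log Z_N/\log N\to 1$ almost surely.

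Combining the two steps, $Z_N \sim N\log Z_N \sim N\log N$ almost surely, which is (\ref{eq:embedding2}). The only genuinely delicate point is the bootstrapping inversion of $x/\log x$, since the asymptotic equivalence is non-linear; but because $x\mapsto x/\log x$ is eventually strictly increasing and of regular variation of index $1$, the inversion is standard and the argument above makes it explicit. No additional probabilistic input beyond Theorem \ref{theorem:embedding1} and non-explosion of the NHPP is required.
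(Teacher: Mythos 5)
Your proposal is correct and follows essentially the same route as the paper: both use $\mathcal N([2,Z_N])=N$, substitute $x=Z_N$ into Theorem \ref{theorem:embedding1} to get $Z_N\sim N\log Z_N$ a.s., then show $\log Z_N\sim\log N$ and combine. The only cosmetic difference is that you establish $\log Z_N\sim\log N$ by an explicit bootstrapping inequality (and explicitly justify $Z_N\to\infty$ via non-explosion), whereas the paper reaches the same intermediate relation by taking logarithms of the counting-process asymptotics, $\log\mathcal N([2,x])\sim\log x-\log\log x\sim\log x$.
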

\begin{proof}
From Theorem \ref{theorem:embedding1} it follows that outside the null set $\mathbb N_1$ (say), 
	\begin{equation}
		\log \left[\mathcal N([2,x])\right]\sim\log x-\log\log x\sim \log x,~\mbox{as}~x\rightarrow\infty.
		\label{eq:pnt1}
	\end{equation}
	Also let $\mathbb N_2$ be the null set outside of which $$Z_N\rightarrow\infty,~\mbox{as}~N\rightarrow\infty.$$
	The following arguments, which essentially constitute the well-known proof in the deterministic prime number scenario, 
	are then valid outside the null set $\mathbb N_1\cup\mathbb N_2$.

		Since $\mathcal N([2,Z_N])=N$, we have 
	\begin{equation}
		\log\left[\mathcal N([2,Z_N])\right]\sim\log N,~\mbox{as}~ N\rightarrow\infty, 
		\label{eq:pnt2}
	\end{equation}
		and applying (\ref{eq:pnt1}) with $x=Z_N$ gives
		\begin{equation}
			\log \left[\mathcal N([2,Z_N])\right]\sim\log Z_N,~\mbox{as}~N\rightarrow\infty.
			\label{eq:pnt3}
		\end{equation}
		Combining (\ref{eq:pnt2}) and (\ref{eq:pnt3}) gives
		\begin{equation}
			\log Z_N\sim\log N,~\mbox{as}~N\rightarrow\infty.
			\label{eq:pnt4}
		\end{equation}
		Again, since $$\frac{Z_N}{\log Z_N}\sim\mathcal N([2,Z_N])=N,$$ as $N\rightarrow\infty$, we have
		$$Z_N\sim N\log(Z_N),$$, which, combining with (\ref{eq:pnt4}) yields
		$$Z_N\sim N\log N,~\mbox{as}~N\rightarrow\infty.$$

		In other words, (\ref{eq:embedding2}) holds.
\end{proof}

\begin{corollary}
	\label{cor:cor_pnt}
	It follows from the PNT that $p_N\sim N\log N$, as $N\rightarrow\infty$, where $p_N$ is the $N$-th prime number. Combining this with Theorem \ref{theorem:embedding2}
	implies that with respect to our NHPP with $\alpha=1$ and any given $\beta>0$, $$Z_N\sim p_N,~\mbox{almost surely, as}~N\rightarrow\infty.$$
\end{corollary}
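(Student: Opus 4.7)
The plan is to chain two asymptotic equivalences: the classical asymptotic for the $N$-th prime coming from the PNT, and the almost-sure asymptotic for the waiting time $Z_N$ established in Theorem \ref{theorem:embedding2}. Once both quantities are shown to be asymptotically equivalent to the common scale $N\log N$, the desired almost-sure equivalence $Z_N\sim p_N$ is obtained by simple ratio manipulation.

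First, I would invoke the standard consequence of the PNT that $p_N\sim N\log N$ as $N\to\infty$. This is deterministic and does not involve our probabilistic construction; it follows by inverting $\varphi(x)\sim x/\log x$ in essentially the same way the deterministic counterpart of Theorem \ref{theorem:embedding2} is derived. Next, I would invoke Theorem \ref{theorem:embedding2} directly, which gives that there exists a null set $\mathbb N\subset\Omega$ such that for every $\omega\notin\mathbb N$,
\begin{equation*}
\frac{Z_N(\omega)}{N\log N}\longrightarrow 1 \quad \text{as } N\to\infty.
\end{equation*}

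The final step is to write, outside $\mathbb N$,
\begin{equation*}
\frac{Z_N(\omega)}{p_N}=\frac{Z_N(\omega)}{N\log N}\cdot\frac{N\log N}{p_N},
\end{equation*}
and note that the first factor converges to $1$ almost surely by Theorem \ref{theorem:embedding2}, while the second factor converges to $1$ deterministically by the PNT corollary $p_N\sim N\log N$. Hence the product converges to $1$ almost surely, which is exactly the claim $Z_N\sim p_N$ almost surely as $N\to\infty$.

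There is no substantive obstacle here; the corollary is a genuine corollary in the sense that essentially all the work has already been done in Theorem \ref{theorem:embedding2}. The only point worth being careful about is that the exceptional null set is inherited from Theorem \ref{theorem:embedding2} (the deterministic PNT statement does not enlarge it), so the conclusion is valid pointwise outside a single null set, which is what ``almost surely'' requires.
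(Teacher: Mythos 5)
Your proof is correct and coincides with the paper's intent: the paper states this as an immediate corollary without supplying a separate argument, and your chaining of the deterministic PNT equivalence $p_N\sim N\log N$ with the almost-sure conclusion of Theorem \ref{theorem:embedding2} via the ratio factorization is exactly the intended reasoning. Your remark that the exceptional null set is inherited solely from Theorem \ref{theorem:embedding2} is a correct and worthwhile observation.
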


\begin{remark}
	\label{remark:em3}
	Theorems \ref{theorem:embedding1} and \ref{theorem:embedding2}, which are the stochastic analogues of the PNT, show, in conjunction with 
	Corollary \ref{cor:cor_pnt} that our NHPP is ``consistent" with respect to the PNT in the almost sure sense.
	Again, Remark \ref{ewmark:em1} remains valid in this context.
\end{remark}

\begin{remark}
	\label{remark:em4}
	Theorem \ref{theorem:embedding2} is an encouraging result from our Bayesian perspective, indicating that the Bayesian prediction of $Z_N$, for sufficiently
	large $N$, is expected to be close to the $N$-th prime number $p_N$. More precisely, the posterior predictive distribution of $Z_N$ is expected to include
	$p_N$ in its high-density region. Hence, application of TMCMC, which simulates from high-density regions with high probability, 
	is expected to significantly reduce the effort of discovering large prime numbers.

\end{remark}

\ctn{Hoh1930} proved the important result of number theory that there exits $\theta<1$ such that
\begin{equation}
	\varphi(x+x^\theta)-\varphi(x)\sim\frac{x^\theta}{\log x},~\mbox{as}~x\rightarrow\infty,
	\label{eq:ho1}
\end{equation}
from which it follows that for sufficiently large $N$, the ``prime gaps" $p_{N+1}-p_N$ satisfy
$$p_{N+1}-p_N<p^\theta_N,~\mbox{for sufficiently large}~N.$$

With our NHPP, we establish a stronger result compared to (\ref{eq:ho1}), where $\theta>1/2$. We formalize the result as Theorem \ref{theorem:pgap1}.
\begin{theorem}
	\label{theorem:pgap1}
	Our NHPP with $\alpha=1$ and any given $\beta>0$, satisfies, for $\theta>1/2$,
	\begin{equation}
		\mathcal N([2,x+x^{\theta}])-\mathcal N(]2,x])=\mathcal N((x,x^{\theta}])\sim\frac{x^{\theta}}{\log x},~\mbox{almost surely, as}~x\rightarrow\infty.
		\label{eq:pgap1}
	\end{equation}
\end{theorem}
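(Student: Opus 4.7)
The plan is to mimic the proof of Theorem \ref{theorem:embedding1} for the sliding interval $(x, x+x^\theta]$ in place of $[2,x]$, with an additional sandwich step to upgrade from integer $x$ to arbitrary real $x$. The three ingredients are (i) an asymptotic evaluation of the intensity $\Lambda_{1,\beta}((x, x+x^\theta])$, (ii) a fourth-moment Markov inequality combined with Borel--Cantelli along the integer sequence, and (iii) control of boundary fluctuations to pass from integers to all real $x$.

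First I would evaluate $\Lambda_{1,\beta}((x, x+x^\theta])$. The $\alpha=1$ part, $\int_x^{x+x^\theta}du/\log u$, is squeezed between $x^\theta/\log(x+x^\theta)$ and $x^\theta/\log x$, both of which are $\sim x^\theta/\log x$. The $\beta$-part $\beta\int_x^{x+x^\theta}\frac{\log u+1}{\sqrt u}du$ is $O(x^{\theta-1/2}\log x)$, and the ratio $(x^{\theta-1/2}\log x)/(x^\theta/\log x)=(\log x)^2/\sqrt x\to 0$, so the $\beta$-term is asymptotically negligible and $\Lambda_{1,\beta}((x, x+x^\theta])\sim x^\theta/\log x$.

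Next, since $\mathcal N((x, x+x^\theta])$ is Poisson with mean $\Lambda_x := \Lambda_{1,\beta}((x,x+x^\theta])$, the fourth central moment identity $E[(\mathcal N-\Lambda_x)^4]=3\Lambda_x^2+\Lambda_x$ and Markov's inequality give
\begin{equation*}
P\left(\left|\frac{\mathcal N((x,x+x^\theta])}{\Lambda_x}-1\right|>\epsilon\right)\leq\frac{3\Lambda_x^2+\Lambda_x}{\epsilon^4\Lambda_x^4}\sim\frac{3(\log x)^2}{\epsilon^4 x^{2\theta}}.
\end{equation*}
Specialising to integer $x=n$, the series $\sum_n(\log n)^2/n^{2\theta}$ converges precisely when $\theta>1/2$ -- exactly the assumed range -- so Borel--Cantelli yields
\begin{equation*}
\frac{\mathcal N((n,n+n^\theta])}{n^\theta/\log n}\longrightarrow 1,\quad\mbox{almost surely, as}~n\to\infty.
\end{equation*}

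Finally, for arbitrary $x\in[n,n+1)$, the monotonicity of $\mathcal N([2,\cdot])$ gives the sandwich
\begin{equation*}
\mathcal N((n+1,n+n^\theta])\leq\mathcal N((x,x+x^\theta])\leq\mathcal N((n,n+1+(n+1)^\theta]).
\end{equation*}
Both bounds differ from $\mathcal N((n,n+n^\theta])$ only by counts on two boundary intervals near $n$ and near $n+n^\theta$, each of length at most two and hence of mean $O(1/\log n)$. A second Markov--Borel--Cantelli step (using a sufficiently high moment of a Poisson variable) shows these boundary counts are almost surely $O(n^{\epsilon})$ for every $\epsilon>0$, which is $o(n^\theta/\log n)$; combined with $n^\theta/\log n\sim x^\theta/\log x$ for $x\in[n,n+1)$, the sandwich delivers (\ref{eq:pgap1}). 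The main obstacle -- absent in Theorem \ref{theorem:embedding1} -- is the non-monotonicity of $x\mapsto\mathcal N((x,x+x^\theta])$, which forces this separate boundary control; incidentally, the cutoff $\theta>1/2$ is sharp for the method, since it is simultaneously what makes the $\beta$-term of the intensity negligible and the Borel--Cantelli series along integers summable.
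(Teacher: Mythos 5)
Your proof is correct and follows the same high-level strategy as the paper (asymptotic evaluation of the intensity, a moment-based Markov bound, Borel--Cantelli along integers), but you are actually more careful than the paper's proof in two respects, and the comparison is worth noting. First, you use the fourth central moment, so your bound $P(|Y|>\epsilon)\leq\epsilon^{-4}E[Y^4]$ is a legitimate application of Markov's inequality; the paper instead writes $P(|Y|>\epsilon)<\epsilon^{-5}E[Y^5]$, which is not a valid instance of Markov's inequality since the fifth power is odd -- one would need $E[|Y|^5]$, and $|Y|^5\neq Y^5$ because $Y=\mathcal N/\Lambda-1$ takes both signs. Your choice of an even moment sidesteps this. (Incidentally, the paper's algebra inside that step also slides between $\epsilon^{-5}$ and $\epsilon^{-4}$ and between $\{10\Lambda\}^2$ and $10\Lambda^2$, but those are cosmetic; the structural issue is the odd power.) Second, your sandwich step upgrading almost sure convergence along integers to all real $x$ is genuinely needed here, and the paper omits it: in Theorem \ref{theorem:embedding1} the count $\mathcal N([2,x])$ is monotone in $x$ and the integer-to-real passage is automatic, but the sliding-window count $\mathcal N((x,x+x^{\theta}])$ is not monotone in $x$, so the boundary control you supply is a real (if minor) gap in the published argument. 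The sandwich inclusions and the high-moment bound on the boundary Poisson counts are both correct.

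One caveat on your closing commentary: the claim that the cutoff $\theta>1/2$ ``is simultaneously what makes the $\beta$-term of the intensity negligible'' is not right. As your own computation shows, the $\beta$-term ratio is $(\log x)^2/\sqrt{x}$, in which $\theta$ has cancelled, so the $\beta$-contribution is negligible for every $\theta>0$. The only place $\theta>1/2$ actually bites is the summability of $\sum_n(\log n)^2/n^{2\theta}$; and even that threshold is an artifact of stopping at the fourth moment, since higher even moments would yield convergence for smaller $\theta$. This does not affect the validity of your proof, only the ``sharpness'' remark at the end.
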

\begin{proof}
	Note that $E\left[\mathcal N((x,x+x^{\theta}]\right] =\Lambda_{\alpha,\beta}((x,x+x^{\theta})]$. Also, it is easy to verify that for $\theta>1/2$,
	$$\Lambda_{\alpha,\beta}((x,x+x^{\theta}))\sim \frac{x^{\theta}}{\log x},~\mbox{as}~x\rightarrow\infty.$$

	Here we have, by Markov's inequality, for any $\epsilon>0$,
	\begin{align}
		&P\left(\left|\frac{\mathcal N((x,x+x^{\theta}])}{\Lambda_{\alpha,\beta}((x,x+x^{\theta}])}-1\right|>\epsilon\right)<
		\epsilon^{-5}E\left[\frac{\mathcal N((x,x+x^\theta])}{\Lambda_{\alpha,\beta}((x,x+x^\theta])}-1\right]^5\notag\\
		&\qquad =\epsilon^{-5}\frac{\left\{10\Lambda_{\alpha,\beta}((x,x+x^\theta])\right\}^2+\Lambda_{\alpha,\beta}((x,x+x^\theta])}
		{\left[\Lambda_{\alpha,\beta}((x,x+x^\theta)]\right]^5}
		\sim\epsilon^{-4}\frac{10\alpha \left(\log x\right)^3}{x^{3\theta}},~\mbox{as}~x\rightarrow\infty.
		\label{eq:pgap2}
	\end{align}
	Now, since
	$$\sum_{n=1}^{\infty}\frac{\left(\log n\right)^3}{n^{3\theta}}<\infty~\mbox{for}~\theta>1/2,$$
	it follows from (\ref{eq:pgap2}) that
	$$\sum_{n=1}^{\infty}P\left(\left|\frac{\mathcal N((n,n+n^\theta])}{\Lambda_{\alpha,\beta}((n,n+n^\theta])}-1\right|>\epsilon\right)<\infty,$$
	and hence, due to the Borel-Cantelli lemma, it holds that
	$$ \mathcal N((x,x+x^\theta])\sim\Lambda_{\alpha,\beta}((x,x+x^\theta]),~\mbox{almost surely, as}~x\rightarrow\infty.$$
	Since $\Lambda_{\alpha,\beta}((x,x+x^\theta])\sim\frac{\alpha x^\theta}{\log x}$, it follows that
	\begin{equation}
		\mathcal N((x,x+x^\theta])\sim\alpha\frac{x^\theta}{\log x},~\mbox{almost surely, as}~x\rightarrow\infty.
		\label{eq:pgap3}
	\end{equation}
	Since (\ref{eq:pgap3}) holds given any $\alpha>0$, (\ref{eq:embedding1}) holds when $\alpha=1$.
\end{proof}

\begin{remark}
	\label{remark:remark_pgp}
	Under assumptions on the Riemann zeta function, \ctn{Ing1937} had shown that, as $x\rightarrow\infty$,
	$$\varphi(x+x^\theta)-\varphi(x)\sim\frac{x^\theta}{\log x},~\mbox{for any}~\theta>\frac{1+4c}{2+4c},$$
	for some $c>0$. This is in keeping with Theorem \ref{theorem:pgap1}, but importantly, our result does not require
	any assumption on the Rienann zeta function.
\end{remark}

\begin{remark}
	\label{remark:pgap2}
	It is easy to see that Theorem \ref{theorem:pgap1} remains valid for any error function $f(\cdot)$ satisfying
	$$\frac{\hat\beta_x\int_x^{x+x^\theta}f(u)du}{x^\theta/\log x}\rightarrow 0,~\mbox{as}~x\rightarrow\infty.$$
\end{remark}

\section{Recursive Bayesian theory for prime prediction: the key ideas}
\label{sec:recursive}

\subsection{Recursive posteriors of the parameters}
\label{subsec:recursive_posteriors}
Our first goal is to learn the parameters recursively. That is,
given $Z_1=t_1$, one can learn $(\alpha,\beta)$ from $f_{Z_1,\alpha,\beta}(t_1)$; let the learned value be denoted by 
$(\alpha_1,\beta_1)$. Then the resultant density of $Z_1$ at $t_1$ becomes $f_{Z_1,\alpha_1,\beta_1}(t_1)$. 
Now, conditional on $Z_1=t_1$, given $Z_2=t_2$, $(\alpha,\beta)$ can be learned from the density $f_{Z_2,\alpha,\beta}(t_2)$. Letting the learned value be $(\alpha_2,\beta_2)$,
the density of $Z_2$ at $t_2$ becomes $f_{Z_2,\alpha_2,\beta_2}(t_2|Z_1=t_1)$.
Thus, in general, for $k\geq 2$, we shall denote the $k$-th conditional density by $f_{Z_k,\alpha_k,\beta_k}(t_k|Z_{k-1}=t_{k-1})$.

Let $$B^{(0)}_1=\int_2^{t_1}li(u)du~~\mbox{and for}~k\geq 1,~\mbox{let}~ B^{(k)}_1=\int_{t_k}^{t_{k+1}}li(u)du.$$ Also, let 
$$B^{(0)}_2=\int_2^{t_1}f(u)du=~\mbox{and for}~k\geq 1,~~B^{(k)}_2=\int_{t_k}^{t_{k+1}}f(u)du.$$ Further let
$$C^{(0)}_1=li(t_1)~\mbox{and for}~t\geq 1,~ C^{(k)}_1=li(t_{k+1}).$$ Finally, let
$$C^{(0)}_2=f(t_1)~\mbox{and for}~t\geq 1,~C^{(k)}_2=f(t_{k+1}).$$ 

Then note that
\begin{equation}
f_{Z_1,\alpha_1,\beta_1}(t_1)=\exp\left\{-\left(\alpha_1 B^{(0)}_1+\beta_1 B^{(0)}_2\right)\right\}\left(\alpha_1 C^{(0)}_1+\beta_1 C^{(0)}_2\right)
	\label{eq:z1_dens}
\end{equation}
and for $k\geq 1$,
\begin{equation}
	f_{Z_{k+1},\alpha_{k+1},\beta_{k+1}}(t_{k+1}|Z_k=t_k)=\exp\left\{-\left(\alpha_{k+1} B^{(k)}_1+\beta_{k+1} B^{(k)}_2\right)\right\}
	\left(\alpha_{k+1} C^{(k)}_1+\beta_{k+1} C^{(k)}_2\right)
	\label{eq:zk_dens}
\end{equation}

We learn about the parameters in the following recursive Bayesian way. With some prior 
$\pi(\alpha_1,\beta_1)$ on $(\alpha_1,\beta_1)$, we first compute the posterior distribution of $(\alpha_1,\beta_1)$ given $Z_1=t_1$, that is,
we obtain the posterior $\pi(\alpha_1,\beta_1|Z_1=t_1)\propto\pi(\alpha_1,\beta_1)f_{Z_1,\alpha_1,\beta_1}(t_1)$. To learn about $(\alpha_2,\beta_2)$, 
we first essentially set the prior of $(\alpha_2,\beta_2)$ to be the posterior $\pi(\alpha_1,\beta_1|Z_1=t_1)$, with $(\alpha_1,\beta_1)$ replaced by $(\alpha_2,\beta_2)$.
Then we obtain the posterior of $(\alpha_2,\beta_2)$ given $Z_2=t_2$ essentially as 
$\pi(\alpha_2,\beta_2|Z_2=t_2)\propto\pi(\alpha_2,\beta_2|Z_1=t_1)f_{Z_2=t_2,\alpha_2,\beta_2)}(t_2|Z_1=t_1)$.
In general, for $k\geq 2$, we learn about $(\alpha_k,\beta_k)$ by essentially computing the posterior 
$\pi(\alpha_k,\beta_k|Z_k=t_k)\propto\pi(\alpha_k,\beta_k|Z_{k-1}=t_{k-1})f_{Z_k,\alpha_k,\beta_k}(t_k|Z_{k-1}=t_{k-1})$.

\subsection{Prediction with recursive posteriors}
\label{subsec:recursive_prediction}
The posterior predictive density of $Z_{k+1}$ at any point $t$ is given by
\begin{equation}
	\int_0^{\infty}\int_0^{\infty}f_{Z_{k+1},\alpha_{k},\beta_{k}}(t|Z_k=t_k)\pi(\alpha_{k},\beta_{k}|Z_k=t_k)d\alpha_{k} d\beta_{k},\label{eq:prediction}
\end{equation}
where we have notationally replaced $f_{Z_{k+1},\alpha_{k+1},\beta_{k+1}}(t|Z_k=t_k)$ with $f_{Z_{k+1},\alpha_{k},\beta_{k}}(t|Z_k=t_k)$. The reason for this
is that, since $Z_{k+1}$ is not observed, $\alpha_{k+1},\beta_{k+1})$ must be learned from the prior, and here the prior for $(\alpha_{k+1},\beta_{k+1})$
is nothing but the available posterior $\pi(\alpha_{k},\beta_{k}|Z_k=t_k)$, subject to the only requirement of notational adjustment for the parameter indices.

\section{Recursive Bayesian theory: the details}
\label{sec:recursive_details}

\subsection{Recursive posteriors}
\label{subsec:recursive_posteriors2}
Let the prior for $(\alpha_1,\beta_1)$ be given by
\begin{equation}
	\pi(\alpha_1,\beta_1)\propto\exp\left(-a\alpha_1\right)\alpha^{\gamma-1}_1\times\exp\left(-b\beta_1\right)\beta^{\xi-1}_1.
	\label{eq:prior1}
\end{equation}
The posterior of $(\alpha_1,\beta_1)$ given $Z_1=t_1$ is then 
\begin{align}
	&\pi(\alpha_1,\beta_1|Z_1=t_1)\propto\pi(\alpha_1,\beta_1)f_{Z_1,\alpha_1,\beta_1}(t_1)\notag\\
	&\qquad=
	C^{(0)}_1\exp\left\{-\alpha_1\left(a+B^{(0)}_1\right)\right\} \exp\left\{-\beta_1\left(b+B^{(0)}_2\right)\right\}\alpha^{\gamma+1-1}_1\beta^{\xi-1}_1\notag\\
	&\qquad\qquad+
	C^{(0)}_2\exp\left\{-\alpha_1\left(a+B^{(0)}_1\right)\right\} \exp\left\{-\beta_1\left(b+B^{(0)}_2\right)\right\}\alpha^{\gamma-1}_1\beta^{\xi+1-1}_1,\label{eq:gamma_mix1}
\end{align}
which is a mixture of two products of gamma densities.

For the prior of $(\alpha_2,\beta_2)$, we slightly modify the posterior (\ref{eq:gamma_mix1}) to set
\begin{equation}
	\pi(\alpha_2,\beta_2)\propto\exp\left\{-\alpha_1\left(a+B^{(0)}_1\right)\right\} \exp\left\{-\beta_1\left(b+B^{(0)}_2\right)\right\}\alpha^{\gamma+1-1}_1\beta^{\xi+1-1}_1,
	\label{eq:gamma_mix2}
\end{equation}
which is obtained by adding a power to $\beta_1$ in the first mixture component and adding a power to $\alpha_1$ in the second mixture component. With this prior
we obtain the following posterior of $(\alpha_2,\beta_2)$ given $Z_2=t_2$:
\begin{align}
	&\pi(\alpha_2,\beta_2|Z_2=t_2)\propto\pi(\alpha_2,\beta_2)f_{Z_2,\alpha_2,\beta_2}(t_2)\notag\\
	&\qquad=
	C^{(1)}_1\exp\left\{-\alpha_1\left(a+B^{(0)}_1+B^{(1)}_1\right)\right\} \exp\left\{-\beta_1\left(b+B^{(0)}_2+B^{(1)}_2\right)\right\}\alpha^{\gamma+2-1}_1\beta^{\xi+1-1}_1\notag\\
	&\qquad\qquad+
	C^{(1)}_2\exp\left\{-\alpha_1\left(a+B^{(0)}_1+B^{(1)}_1\right)\right\} \exp\left\{-\beta_1\left(b+B^{(0)}_2+B^{(1)}_2\right)\right\}\alpha^{\gamma+1-1}_1\beta^{\xi+2-1}_1,\label{eq:gamma_mix3}
\end{align}
Thus, in general, for $k\geq 2$, we have
\begin{equation}
	\pi(\alpha_k,\beta_k)\propto\exp\left\{-\alpha_1\left(a+\sum_{i=0}^{k-2}B^{(i)}_1\right)\right\} \exp\left\{-\beta_1\left(b+\sum_{i=0}^{k-2}B^{(i)}_2\right)\right\}
	\alpha^{\gamma+k-1-1}_1\beta^{\xi+k-1-1}_1,
	\label{eq:gamma_mix4}
\end{equation}
and
\begin{align}
	&\pi(\alpha_k,\beta_k|Z_k=t_k)\propto\pi(\alpha_k,\beta_k)f_{Z_k,\alpha_k,\beta_k}(t_k)\notag\\
	&\qquad=
	C^{(k-1)}_1\exp\left\{-\alpha_k\left(a+\sum_{i=0}^{k-1}B^{(i)}_1\right)\right\} \exp\left\{-\beta_k\left(b+\sum_{i=0}^{k-1}B^{(i)}_2\right)\right\}
	\alpha^{\gamma+k-1}_k\beta^{\xi+k-1-1}_k\notag\\
	&\qquad\qquad+
	C^{(k-1)}_2\exp\left\{-\alpha_k\left(a+\sum_{i=0}^{k-1}B^{(i)}_1\right)\right\} \exp\left\{-\beta_k\left(b+\sum_{i=0}^{k-1}B^{(i)}_2\right)\right\}
	\alpha^{\gamma+k-1-1}_k\beta^{\xi+k-1}_k.\label{eq:gamma_mix5}
\end{align}

For any $a>0,b>0$ let $g(x:a,b)$ denote the gamma density given by $g(x:a,b)=\frac{a^b}{\Gamma(b)}\exp(-ax)x^{b-1}$.
Then the closed form expression for the posterior $\pi(\alpha_k,\beta_k|Z_k=t_k)$ is given by
\begin{align}
	&\pi(\alpha_k,\beta_k|Z_k=t_k)\notag\\
	&\qquad=R_kC^{(k-1)}_1\times
	\frac{\Gamma(\gamma+k)}{\left(a+\sum_{i=0}^{k-1}B^{(i)}_1\right)^{\gamma+k}} 
	\times\frac{\Gamma(\xi+k-1)}{\left(b+\sum_{i=0}^{k-1}B^{(i)}_2\right)^{\xi+k-1}}\notag\\
	&\qquad\qquad\times g\left(\alpha_k:a+\sum_{i=0}^{k-1}B^{(i)}_1,\gamma+k\right)
	\times g\left(\beta_k:b+\sum_{i=0}^{k-1}B^{(i)}_2,\xi+k-1\right)\notag\\
	&\qquad+R_kC^{(k-1)}_2\times
	\frac{\Gamma(\gamma+k-1)}{\left(a+\sum_{i=0}^{k-1}B^{(i)}_1\right)^{\gamma+k-1}} 
	\times\frac{\Gamma(\xi+k)}{\left(b+\sum_{i=0}^{k-1}B^{(i)}_2\right)^{\xi+k}}\notag\\
	&\qquad\qquad\times g\left(\alpha_k:a+\sum_{i=0}^{k-1}B^{(i)}_1,\gamma+k-1\right)
	\times g\left(\beta_k:b+\sum_{i=0}^{k-1}B^{(i)}_2,\xi+k\right),\label{eq:post_k}
\end{align}
where $R_k$ is the normalizing constant, given by
\begin{align}
	&R^{-1}_k
	=C^{(k-1)}_1\times
	\frac{\Gamma(\gamma+k)}{\left(a+\sum_{i=0}^{k-1}B^{(i)}_1\right)^{\gamma+k}} 
	\times\frac{\Gamma(\xi+k-1)}{\left(b+\sum_{i=0}^{k-1}B^{(i)}_2\right)^{\xi+k-1}}\notag\\
	&\qquad+C^{(k-1)}_2\times
	\frac{\Gamma(\gamma+k-1)}{\left(a+\sum_{i=0}^{k-1}B^{(i)}_1\right)^{\gamma+k-1}} 
	\times\frac{\Gamma(\xi+k)}{\left(b+\sum_{i=0}^{k-1}B^{(i)}_2\right)^{\xi+k}}.\label{eq:normconst_k}
\end{align}

\subsection{Recursive prediction}
\label{subsec:recursive_prediction2}
Using (\ref{eq:prediction}), we obtain
the posterior predictive density of $Z_{k+1}$ at any point $t=t_{k+1}$, where $t_{k+1}>t_k$, to be
\begin{align}
&\pi(Z_{k+1}=t_{k+1}|Z_k=t_k)=	\int_0^{\infty}\int_0^{\infty}f_{Z_{k+1},\alpha_{k},\beta_{k}}(t=t_{k+1}|Z_k=t_k)\pi(\alpha_{k},\beta_{k}|Z_k=t_k)d\alpha_{k} d\beta_{k}\notag\\
	&\qquad
	=R_kC^{(k)}_1C^{(k-1)}_1\times
	\frac{\Gamma(\gamma+k+1)}{\left(a+\sum_{i=0}^{k}B^{(i)}_1\right)^{\gamma+k+1}} 
	\times\frac{\Gamma(\xi+k-1)}{\left(b+\sum_{i=0}^{k}B^{(i)}_2\right)^{\xi+k-1}}\notag\\
	&\qquad+R_kC^{(k)}_2C^{(k-1)}_1\times
	\frac{\Gamma(\gamma+k)}{\left(a+\sum_{i=0}^{k}B^{(i)}_1\right)^{\gamma+k}} 
	\times\frac{\Gamma(\xi+k)}{\left(b+\sum_{i=0}^{k}B^{(i)}_2\right)^{\xi+k}}\notag\\
	&\qquad+R_kC^{(k)}_1C^{(k-1)}_2\times
	\frac{\Gamma(\gamma+k)}{\left(a+\sum_{i=0}^{k}B^{(i)}_1\right)^{\gamma+k}} 
	\times\frac{\Gamma(\xi+k)}{\left(b+\sum_{i=0}^{k}B^{(i)}_2\right)^{\xi+k}}\notag\\
	&\qquad+R_kC^{(k)}_2C^{(k-1)}_2\times
	\frac{\Gamma(\gamma+k-1)}{\left(a+\sum_{i=0}^{k}B^{(i)}_1\right)^{\gamma+k-1}} 
	\times\frac{\Gamma(\xi+k+1)}{\left(b+\sum_{i=0}^{k}B^{(i)}_2\right)^{\xi+k+1}}.
	\label{eq:post_pred_k}
\end{align}

\section{Convergence of the marginal posteriors of the parameters and refutation of RH}
\label{sec:con}

In the remaining of this article, we shall denote ``converges in probability" by $\stackrel{P}{\longrightarrow}$.
\subsection{Posterior convergence of $\alpha_k$ and validation of the $Li$ function}
\label{subsec:alpha_conv}

\begin{theorem}
	\label{theorem:pconv_alpha}
	Under the marginal posterior probability measures $\pi(\alpha_k|Z_k=t_k)$; $k\geq 1$,
	\begin{equation}
		\alpha_k\stackrel{P}{\longrightarrow} 1,~\mbox{as}~k\rightarrow\infty.
		\label{eq:pconv_alpha}
	\end{equation}
\end{theorem}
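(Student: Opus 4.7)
The plan is to show that the marginal posterior of $\alpha_k$ is a two-component mixture of gamma densities whose shape and rate both grow like $k$, so that each component concentrates at $1$ by Chebyshev's inequality, forcing the mixture to do the same.

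First I would integrate the joint posterior in (\ref{eq:post_k}) with respect to $\beta_k$. Because each mixture term factors as a product of independent gamma densities in $\alpha_k$ and $\beta_k$, the $\beta_k$-integrals contribute only multiplicative constants that are absorbed into the mixture weights, yielding
\begin{equation*}
	\pi(\alpha_k|Z_k=t_k) = w^{(1)}_k \, g(\alpha_k:A_k,\gamma+k) + w^{(2)}_k \, g(\alpha_k:A_k,\gamma+k-1),
\end{equation*}
where $A_k := a + \sum_{i=0}^{k-1} B^{(i)}_1$ and $w^{(1)}_k, w^{(2)}_k \geq 0$ with $w^{(1)}_k + w^{(2)}_k = 1$; the explicit weights can be read off (\ref{eq:post_k})--(\ref{eq:normconst_k}), but only their convex-combination structure is needed. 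Using the definitions of $B^{(i)}_1$ from Section \ref{subsec:recursive_posteriors2} together with (\ref{eq:Li2}), the telescoping identity
\begin{equation*}
	\sum_{i=0}^{k-1} B^{(i)}_1 = \int_2^{t_1} li(u)\, du + \sum_{i=1}^{k-1} \int_{t_i}^{t_{i+1}} li(u)\, du = \int_2^{t_k} li(u)\, du = Li(t_k)
\end{equation*}
reduces the rate to $A_k = a + Li(t_k)$.

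Next, by Theorem \ref{theorem:embedding2} the observed waiting times satisfy $t_k = Z_k \sim k\log k$ almost surely, whence
\begin{equation*}
	Li(t_k) \sim \frac{t_k}{\log t_k} \sim \frac{k \log k}{\log k + \log\log k} \sim k,~\mbox{a.s., as}~k\rightarrow\infty,
\end{equation*}
so $A_k/k \to 1$ a.s. Each gamma component then has mean $(\gamma+k)/A_k$ or $(\gamma+k-1)/A_k$, both tending a.s.\ to $1$, and variance of order $k/A_k^2 \sim 1/k \to 0$. Fixing any $\epsilon > 0$, Chebyshev's inequality shows that each component assigns mass $o(1)$ to the complement of $(1-\epsilon,1+\epsilon)$ along almost every realization. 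Because the weights $w^{(j)}_k$ lie in $[0,1]$, the same $o(1)$ bound passes to the mixture, giving $\pi(|\alpha_k - 1| > \epsilon | Z_k = t_k) \to 0$, i.e., $\alpha_k \stackrel{P}{\longrightarrow} 1$.

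The main obstacle is interpretive rather than computational: the posterior at stage $k$ depends on the observed path $t_1,\ldots,t_k$, so the claimed convergence in probability under the posterior must be understood as holding along almost every sample path of the underlying NHPP. This is precisely the sense delivered by Theorem \ref{theorem:embedding2}, so no additional work is required. A minor bookkeeping step is to guarantee that the mean-shift $(\gamma+k)/A_k - 1$ is eventually smaller than $\epsilon/2$ before applying Chebyshev on the symmetric interval of radius $\epsilon/2$ about the mean; this is immediate from $A_k/k \to 1$. The weights $w^{(j)}_k$ themselves require no further analysis, since the argument uses only $w^{(j)}_k \in [0,1]$.
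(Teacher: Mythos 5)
Your proposal is correct and follows essentially the same path as the paper's proof: both reduce the marginal posterior of $\alpha_k$ to a two-component mixture of gamma densities with rate $A_k = a + \sum_{i=0}^{k-1} B^{(i)}_1$ and shapes $\gamma+k$, $\gamma+k-1$, establish $A_k \sim k$, and deduce concentration at $1$. The paper obtains concentration by computing the mixture's first and second moments directly (its equations (\ref{eq:meanlim1})--(\ref{eq:varlim1})) and then appealing implicitly to Chebyshev, whereas you apply Chebyshev to each gamma component separately and pass the bound through the convex combination; these are equivalent in substance. The one genuine difference is how you justify $A_k \sim k$: the paper's key step (\ref{eq:no_primes}) argues directly that $\sum_{i=0}^{k-1} B^{(i)}_1 = Li(t_k) \sim \varphi(t_k) = k$ because $t_1,\ldots,t_k$ are the first $k$ primes and PNT gives $Li(t_k)\sim\varphi(t_k)$, while you route through the stochastic Theorem \ref{theorem:embedding2} to get $t_k \sim k\log k$ almost surely and then derive $Li(t_k)\sim t_k/\log t_k\sim k$. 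Your route is valid and has the virtue of making explicit the sense in which the limit statement must hold along almost every sample path; the paper's route is shorter because it treats the $t_k$ as the deterministic primes and invokes PNT without needing the NHPP almost-sure machinery at all. Either reading is consistent with the recursive Bayesian setup, and the proposal's final remark about interpreting the convergence path-by-path is a worthwhile clarification that the paper leaves implicit.
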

\begin{proof}
From (\ref{eq:post_k}) it follows that the marginal posterior of $\alpha_k$ is
\begin{align}
	&\pi(\alpha_k|Z_k=t_k)\notag\\
	&\qquad=R_kC^{(k-1)}_1\times
	\frac{\Gamma(\gamma+k)}{\left(a+\sum_{i=0}^{k-1}B^{(i)}_1\right)^{\gamma+k}} 
	\times\frac{\Gamma(\xi+k-1)}{\left(b+\sum_{i=0}^{k-1}B^{(i)}_2\right)^{\xi+k-1}}\notag\\
	&\qquad\qquad\times g\left(\alpha_k:a+\sum_{i=0}^{k-1}B^{(i)}_1,\gamma+k\right)\notag\\
	&\qquad+R_kC^{(k-1)}_2\times
	\frac{\Gamma(\gamma+k-1)}{\left(a+\sum_{i=0}^{k-1}B^{(i)}_1\right)^{\gamma+k-1}} 
	\times\frac{\Gamma(\xi+k)}{\left(b+\sum_{i=0}^{k-1}B^{(i)}_2\right)^{\xi+k}}\notag\\
	&\qquad\qquad\times g\left(\alpha_k:a+\sum_{i=0}^{k-1}B^{(i)}_1,\gamma+k-1\right).
	\label{eq:post_alpha_k}
\end{align}
Then
\begin{align}
	&E(\alpha_k|Z_k=t_k)\notag\\
	&\qquad=R_kC^{(k-1)}_1\times
	\frac{\Gamma(\gamma+k)}{\left(a+\sum_{i=0}^{k-1}B^{(i)}_1\right)^{\gamma+k}} 
	\times\frac{\Gamma(\xi+k-1)}{\left(b+\sum_{i=0}^{k-1}B^{(i)}_2\right)^{\xi+k-1}}\notag\\
	&\qquad\qquad\times \frac{\gamma+k}{a+\sum_{i=0}^{k-1}B^{(i)}_1}\notag\\
	&\qquad+R_kC^{(k-1)}_2\times
	\frac{\Gamma(\gamma+k-1)}{\left(a+\sum_{i=0}^{k-1}B^{(i)}_1\right)^{\gamma+k-1}} 
	\times\frac{\Gamma(\xi+k)}{\left(b+\sum_{i=0}^{k-1}B^{(i)}_2\right)^{\xi+k}}\notag\\
	&\qquad\qquad\times \frac{\gamma+k-1}{a+\sum_{i=0}^{k-1}B^{(i)}_1}.
	\label{eq:postmean_alpha_k}
\end{align}
Now note that
\begin{equation}
	\sum_{i=0}^{k-1}B^{(i)}_1=\int_2^{t_k}li(u)du\sim k~\mbox{as}~k\rightarrow\infty,
	\label{eq:no_primes}
\end{equation}
	since there are $k$ primes $t_1,\ldots,t_k$ in the interval $[2,t_k]$ considered in the above log-integral.
It follows that
\begin{equation}
 \frac{\gamma+k}{a+\sum_{i=0}^{k-1}B^{(i)}_1}\rightarrow 1~\mbox{and}~ \frac{\gamma+k-1}{a+\sum_{i=0}^{k-1}B^{(i)}_1}\rightarrow 1,~\mbox{as}~k\rightarrow\infty.
	\label{eq:lim1}
\end{equation}
In other words, for any $\epsilon>0$, there exists $k_0\geq 1$ such that for $k\geq k_0$,
$$ 1-\epsilon<\frac{\gamma+k}{a+\sum_{i=0}^{k-1}B^{(i)}_1}< 1+\epsilon~\mbox{and}~ 1-\epsilon<\frac{\gamma+k-1}{a+\sum_{i=0}^{k-1}B^{(i)}_1}<1+\epsilon.$$
Applying to (\ref{eq:lim1}) to (\ref{eq:postmean_alpha_k}) we obtain
\begin{equation*}
	1-\epsilon<E(\alpha_k|Z_k=t_k)<1+\epsilon,
\end{equation*}
that is,
\begin{equation}
	E(\alpha_k|Z_k=t_k)\rightarrow 1,~\mbox{as}~k\rightarrow\infty.
	\label{eq:meanlim1}
\end{equation}
Noting that the second moment of $g(x:a,b)$ is $\frac{a(a+1)}{b^2}$, for $a,b>0$, we similarly obtain
\begin{equation}
	E(\alpha^2_k|Z_k=t_k)\rightarrow 1,~\mbox{as}~k\rightarrow\infty.
	\label{eq:meansqlim1}
\end{equation}
Combining (\ref{eq:meanlim1}) and (\ref{eq:meansqlim1}) yields
\begin{equation}
	Var(\alpha_k|Z_k=t_k)\rightarrow 0,~\mbox{as}~k\rightarrow\infty.
	\label{eq:varlim1}
\end{equation}
Combining (\ref{eq:meanlim1}) and (\ref{eq:varlim1}) proves (\ref{eq:pconv_alpha}).
\end{proof}
\begin{remark}
	\label{remark:remark_Li}
	Theorem \ref{theorem:pconv_alpha} provides a validation of the $Li$ function in the PNT, since its coefficient, $\alpha_k$, converges to $1$ in probability.
	This is of course, just a re-validation of the already well-established PNT.
\end{remark}

\subsection{Posterior convergence of $\beta_k$ and falsification of RH}
\label{subsec:postconv__beta}
\begin{theorem}
	\label{theorem:pconv_beta}
	Under the marginal posterior probability measures $\pi(\beta_k|Z_k=t_k)$; $k\geq 1$,
	\begin{equation}
		\beta_k-E(\beta_k|Z_k=t_k)\stackrel{P}{\longrightarrow} 0,~\mbox{as}~k\rightarrow\infty,
		\label{eq:pconv_beta}
	\end{equation}
	where $E(\beta_k|Z_k=t_k)\sim\frac{\sqrt{k}}{\left(\log k\right)^{3/2}}$, as $k\rightarrow\infty$.
\end{theorem}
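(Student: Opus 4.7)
The plan is to mirror the structure of the proof of Theorem \ref{theorem:pconv_alpha}, computing the posterior mean and variance of $\beta_k$ and then invoking Chebyshev's inequality; the new feature is that the posterior mean diverges, so the conclusion is about concentration around a moving center rather than convergence to a fixed constant. Marginalizing (\ref{eq:post_k}) over $\alpha_k$ yields a two-component mixture of gamma densities in $\beta_k$, both sharing the common rate parameter $r_k := b + \sum_{i=0}^{k-1} B^{(i)}_2$, with shapes $\xi+k-1$ and $\xi+k$ respectively. The mixture weights are the same Gamma-function ratios as in (\ref{eq:normconst_k}); a short calculation using $C^{(k-1)}_1 = 1/\log t_k$ and $C^{(k-1)}_2 = (\log\sqrt{t_k}+1)/\sqrt{t_k}$ shows that the weight ratio $w_1/w_2$ stays bounded and bounded away from zero as $k \to \infty$, so both components contribute non-negligibly.

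The central asymptotic ingredient is the rate $r_k$. By the defining property of $f$,
\begin{equation*}
\sum_{i=0}^{k-1} B^{(i)}_2 = \int_{2}^{t_k} f(u)\, du = \sqrt{t_k}\log t_k - \sqrt{2}\log 2.
\end{equation*}
Since $t_k = Z_k$ and Theorem \ref{theorem:embedding2} (together with Corollary \ref{cor:cor_pnt}) gives $Z_k \sim k\log k$ almost surely, we obtain $\sqrt{t_k}\log t_k \sim \sqrt{k\log k}\,(\log k + \log\log k) \sim \sqrt{k}\,(\log k)^{3/2}$. Hence $r_k \sim \sqrt{k}(\log k)^{3/2}$.

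With this in hand, each gamma component in the mixture has mean shape/$r_k \sim k/(\sqrt{k}(\log k)^{3/2}) = \sqrt{k}/(\log k)^{3/2}$ and variance shape/$r_k^2 \sim k/(k(\log k)^3) = 1/(\log k)^3$. The two component means differ by exactly $1/r_k = O(1/(\sqrt{k}(\log k)^{3/2}))$, which is negligible compared to the common leading order $\sqrt{k}/(\log k)^{3/2}$. Taking the convex combination with the bounded mixture weights therefore gives $E(\beta_k\mid Z_k=t_k) \sim \sqrt{k}/(\log k)^{3/2}$, the asymptotic claimed in the statement. The mixture variance decomposes as the weighted average of the component variances plus the weighted variance of the component means; the first term is $O(1/(\log k)^3)$ and the second is $O(1/(k(\log k)^3))$, so $\mathrm{Var}(\beta_k\mid Z_k=t_k) = O(1/(\log k)^3) \to 0$.

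Finally, applying Chebyshev's inequality to the centered posterior random variable, for any $\epsilon>0$,
\begin{equation*}
P\bigl(|\beta_k - E(\beta_k\mid Z_k=t_k)|>\epsilon \,\big|\, Z_k=t_k\bigr) \leq \frac{\mathrm{Var}(\beta_k\mid Z_k=t_k)}{\epsilon^2} = O\!\left(\frac{1}{(\log k)^3}\right) \to 0,
\end{equation*}
which proves (\ref{eq:pconv_beta}). The main technical obstacle is the careful control of the mixture correction in the variance: one must verify that the two components are not so lopsidedly weighted that the variance-of-means term blows up. As indicated above, the ratio $w_1/w_2$ in fact stabilizes to a positive constant (of order unity) after substituting the $\sqrt{t_k}/(\log t_k)^2$ growth of $C^{(k-1)}_1/C^{(k-1)}_2$ and the $(\log k)^{3/2}/\sqrt{k}$ behavior of $r_k/(a+\sum B^{(i)}_1)$, so the mixture behaves, to leading order, like a single gamma with the claimed mean, confirming that the diverging posterior mean $\sqrt{k}/(\log k)^{3/2}$ is a robust conclusion of the recursive Bayesian framework and, as advertised, refutes RH.
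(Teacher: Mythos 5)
Your proof is correct and follows essentially the same route as the paper's: you marginalize the recursive posterior to a two-component gamma mixture in $\beta_k$, compute the asymptotics of $r_k = b + \sum_{i=0}^{k-1}B^{(i)}_2 \sim \sqrt{k}(\log k)^{3/2}$ via $t_k \sim k\log k$, read off $E(\beta_k\mid Z_k=t_k)\sim \sqrt{k}/(\log k)^{3/2}$ and $\mathrm{Var}(\beta_k\mid Z_k=t_k)\to 0$, and finish with Chebyshev. The only cosmetic differences are that you organize the variance bound through the within/between mixture-variance decomposition (with the useful but non-essential observation that $w_1/w_2$ stabilizes near a positive constant) where the paper uses the cruder sandwich $\mu^{(1)}_k < E(\beta_k\mid Z_k=t_k)$, $E(\beta_k^2\mid Z_k=t_k) < \mu^{(2)}_{2k}$; both yield the same $O(1/(\log k)^3)$ bound.
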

\begin{proof}
From (\ref{eq:post_k}) it follows that the marginal posterior of $\alpha_k$ is
\begin{align}
	&\pi(\beta_k|Z_k=t_k)\notag\\
	&\qquad=R_kC^{(k-1)}_1\times
	\frac{\Gamma(\gamma+k)}{\left(a+\sum_{i=0}^{k-1}B^{(i)}_1\right)^{\gamma+k}} 
	\times\frac{\Gamma(\xi+k-1)}{\left(b+\sum_{i=0}^{k-1}B^{(i)}_2\right)^{\xi+k-1}}\notag\\
	&\qquad\qquad\times g\left(\beta_k:b+\sum_{i=0}^{k-1}B^{(i)}_2,\xi+k-1\right)\notag\\
	&\qquad+R_kC^{(k-1)}_2\times
	\frac{\Gamma(\gamma+k-1)}{\left(a+\sum_{i=0}^{k-1}B^{(i)}_1\right)^{\gamma+k-1}} 
	\times\frac{\Gamma(\xi+k)}{\left(b+\sum_{i=0}^{k-1}B^{(i)}_2\right)^{\xi+k}}\notag\\
	&\qquad\qquad\times g\left(\beta_k:b+\sum_{i=0}^{k-1}B^{(i)}_2,\xi+k\right).
	\label{eq:post_beta_k}
\end{align}
Then
\begin{align}
	&E(\beta_k|Z_k=t_k)\notag\\
	&\qquad=R_kC^{(k-1)}_1\times
	\frac{\Gamma(\gamma+k)}{\left(a+\sum_{i=0}^{k-1}B^{(i)}_1\right)^{\gamma+k}} 
	\times\frac{\Gamma(\xi+k-1)}{\left(b+\sum_{i=0}^{k-1}B^{(i)}_2\right)^{\xi+k-1}}\notag\\
	&\qquad\qquad\times \frac{\xi+k-1}{b+\sum_{i=0}^{k-1}B^{(i)}_2}\notag\\
	&\qquad+R_kC^{(k-1)}_2\times
	\frac{\Gamma(\gamma+k-1)}{\left(a+\sum_{i=0}^{k-1}B^{(i)}_1\right)^{\gamma+k-1}} 
	\times\frac{\Gamma(\xi+k)}{\left(b+\sum_{i=0}^{k-1}B^{(i)}_2\right)^{\xi+k}}\notag\\
	&\qquad\qquad\times \frac{\xi+k}{b+\sum_{i=0}^{k-1}B^{(i)}_2}.
	\label{eq:postmean_beta_k}
\end{align}
Here note that $\sum_{i=0}^{k-1}B^{(i)}_2=\sqrt{t_k}\log(t_k)$.
Now, combining the result $\int_2^{t_k}li(u)du\sim\frac{t_k}{\log(t_k)}$ with (\ref{eq:no_primes}), we have
$\frac{t_k}{\log(t_k)}\sim k$, so that $\sqrt{t_k}\log(t_k)\sim \frac{t^{3/2}_k}{k}$. In other words,
\begin{equation}
	\sum_{i=0}^{k-1}B^{(i)}_2\sim \frac{t^{3/2}_k}{k},~\mbox{as}~k\rightarrow\infty.
	\label{eq:beta_approx1}
\end{equation}
	Using (\ref{eq:beta_approx1}) it is easily seen that
	\begin{equation}
		\mu^{(1)}_k=\frac{\xi+k-1}{b+\sum_{i=0}^{k-1}B^{(i)}_2}\sim \frac{k^2}{t^{3/2}_k}~\mbox{and}~
		 \mu^{(2)}_k=\frac{\xi+k}{b+\sum_{i=0}^{k-1}B^{(i)}_2}\sim \frac{k^2}{t^{3/2}_k},~\mbox{as}~k\rightarrow\infty.
		 \label{eq:beta_approx2}
	\end{equation}
	Hence, for any $\epsilon>0$, there exists $k_0\geq 1$ such that for $k\geq k_0$,
	\begin{equation}
		(1-\epsilon)\frac{k^2}{t^{3/2}_k}<\mu^{(i)}_k<(1+\epsilon)\frac{k^2}{t^{3/2}_k},~\mbox{for}~i=1,2.
		\label{eq:beta_approx3}
	\end{equation}
	It follows from (\ref{eq:beta_approx3}) that
	\begin{equation*}
		(1-\epsilon)\frac{k^2}{t^{3/2}_k}<E(\beta_k|Z_k=t_k)<(1+\epsilon)\frac{k^2}{t^{3/2}_k},
	\end{equation*}
	that is,
	\begin{equation}
		E(\beta_k|Z_k=t_k)\sim \frac{k^2}{t^{3/2}_k},~\mbox{as}~k\rightarrow\infty.
		\label{eq:beta_approx4}
	\end{equation}
	Since $t_k\sim k\log k$ as $k\rightarrow\infty$, it follows that 
	\begin{equation}
		\frac{t^{3/2}_k}{k}\sim\sqrt{k}\left(\log k\right)^{3/2}\rightarrow\infty~\mbox{and}~
		\frac{k^2}{t^{3/2}_k}\sim\frac{\sqrt{k}}{\left(\log k\right)^{3/2}}\rightarrow\infty,~\mbox{as}~k\rightarrow\infty.\label{eq:beta_approx5}
	\end{equation}
Now note that, under $g\left(\alpha_k:b+\sum_{i=0}^{k-1}B^{(i)}_2,\xi+k-1\right)$ and $g\left(\beta_k:b+\sum_{i=0}^{k-1}B^{(i)}_2,\xi+k\right)$,
the second order moments are given by
\begin{equation}
	\mu^{(1)}_{2k}=\frac{(\xi+k-1)(\xi+k)}{\left(b+\sum_{i=0}^{k-1}B^{(i)}_2\right)^2}~\mbox{and}~
	\mu^{(2)}_{2k}=\frac{(\xi+k)(\xi+k+1)}{\left(b+\sum_{i=0}^{k-1}B^{(i)}_2\right)^2}.
	\label{eq:beta_approx6}
\end{equation}
Using (\ref{eq:beta_approx6}) and noting that $\mu^{(1)}_k<\mu^{(2)}_k$, we obtain
\begin{equation}
	E\left(\beta^2_k|Z_k=t_k\right)<\mu^{(2)}_{2k}.
	\label{eq:beta_approx7}
\end{equation}
Also, it follows from (\ref{eq:beta_approx2}) that $\mu^{(2)}_k>\mu^{(1)}_k$; using this in (\ref{eq:postmean_beta_k}) yields
\begin{equation}
	E\left(\beta_k|Z_k=t_k\right)>\mu^{(1)}_{k}.
	\label{eq:beta_approx8}
\end{equation}
Combining (\ref{eq:beta_approx7}) and (\ref{eq:beta_approx8}) and using (\ref{eq:beta_approx1}) along with $t_k\sim k\log k$ gives
\begin{align}
	Var(\beta_k|Z_k=t_k)&<\mu^{(2)}_{2k}-\left(\mu^{(1)}_k\right)^2
	=\frac{3(\xi+k)-1}{\left(b+\sum_{i=0}^{k-1}B^{(i)}_2\right)^2}\notag\\
	&\sim\frac{3(\xi+k)-1}{t^3_k/k^2}
	\sim\frac{k^2\left[3(\xi+k)-1\right]}{k^3\left(\log k\right)^3}\notag\\
	&\rightarrow 0,~\mbox{as}~k\rightarrow\infty.
	\label{eq:postvar_beta_k}
\end{align}
Let $\mu_k=E(\beta_k|Z_k=t_k)$. Then from (\ref{eq:beta_approx4}) and (\ref{eq:beta_approx5}) it follows that 
\begin{equation}
	\mu_k=E(\beta_k|Z_k=t_k)\rightarrow\infty,~\mbox{as}~k\rightarrow\infty.
	\label{eq:beta_approx9}
\end{equation}
However, it follows from $E(\beta_k-\mu_k|Z_k=t_k)=0$ and (\ref{eq:postvar_beta_k}), that (\ref{eq:pconv_beta}) holds.
\end{proof}

\begin{theorem}
	\label{theorem:n_bound}
	With respect to the posterior distributions $\pi(\beta_k|Z_k=t_k)$; $k\geq 1$, there does not exist any constant $0< M<\infty$ such that
	$\beta_k<M$ almost surely or in probability, as $k\rightarrow\infty$.
\end{theorem}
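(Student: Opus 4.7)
The plan is to derive this as a direct contradiction from Theorem \ref{theorem:pconv_beta}, which already gives us both the behavior of the posterior mean and the concentration of the posterior around it. Set $\mu_k = E(\beta_k\mid Z_k=t_k)$. From Theorem \ref{theorem:pconv_beta} we have two facts to leverage: first, $\mu_k\sim \sqrt{k}/(\log k)^{3/2}$, so $\mu_k\to\infty$ as $k\to\infty$; second, $\beta_k-\mu_k\stackrel{P}{\longrightarrow}0$ under the posteriors $\pi(\beta_k\mid Z_k=t_k)$.

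First I would dispose of the almost sure case. Suppose, towards contradiction, that there exists $0<M<\infty$ such that $\beta_k<M$ almost surely under $\pi(\beta_k\mid Z_k=t_k)$ for all sufficiently large $k$. Then $\mu_k=E(\beta_k\mid Z_k=t_k)\le M$ for all such $k$, contradicting $\mu_k\to\infty$. Hence no almost sure bound can exist.

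Next I would handle the ``in probability'' case, again by contradiction. Suppose there exists $0<M<\infty$ with $P(\beta_k<M\mid Z_k=t_k)\to 1$ as $k\to\infty$. Because $\mu_k\to\infty$, pick $k$ large enough so that $\mu_k-1>M$. Applying the concentration statement of Theorem \ref{theorem:pconv_beta} with $\epsilon=1$ gives
\begin{equation*}
P(\beta_k>M\mid Z_k=t_k)\;\ge\;P(\beta_k>\mu_k-1\mid Z_k=t_k)\;\ge\;P(|\beta_k-\mu_k|<1\mid Z_k=t_k)\;\to\;1,
\end{equation*}
which forces $P(\beta_k<M\mid Z_k=t_k)\to 0$, contradicting the hypothesis. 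This rules out any $M$ that would bound $\beta_k$ in probability.

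There is no serious obstacle here: the entire argument is a two-line consequence of the divergence of $\mu_k$ together with the concentration $\beta_k-\mu_k\stackrel{P}{\longrightarrow}0$, both already established. The only mildly subtle point is to state the ``in probability'' interpretation carefully, since each posterior $\pi(\beta_k\mid Z_k=t_k)$ lives on its own probability space indexed by $k$; once one agrees that ``$\beta_k<M$ in probability'' means $P(\beta_k<M\mid Z_k=t_k)\to 1$, the contradiction above is immediate.
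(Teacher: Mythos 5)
Your proof is correct, and it is worth noting the one place where it genuinely diverges from the paper's argument. For the ``in probability'' case, your reasoning is essentially identical to the paper's: both use the concentration $\beta_k-\mu_k\stackrel{P}{\longrightarrow}0$ together with $\mu_k\to\infty$ to force $\pi(\beta_k<M\mid Z_k=t_k)\to 0$. For the ``almost sure'' case, however, the paper takes a more roundabout route: it extracts a subsequence $k_j$ along which $\beta_{k_j}-\mu_{k_j}\to 0$ almost surely (invoking the standard ``convergence in probability implies a.s.\ convergence along a subsequence'' result), and then observes that $\beta_{k_j}>\mu_{k_j}-\epsilon\to\infty$ outside a null set. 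Your argument is shorter and more elementary: if $\beta_k<M$ a.s.\ under $\pi(\cdot\mid Z_k=t_k)$, then trivially $\mu_k=E(\beta_k\mid Z_k=t_k)\le M$, contradicting $\mu_k\to\infty$. This bypasses the subsequence device entirely, and it also sidesteps a subtle point in the paper's version, namely that the subsequence extraction result is usually stated for a single fixed probability space, whereas here each posterior $\pi(\cdot\mid Z_k=t_k)$ is its own measure (one must implicitly pass to a product space for the paper's step to be rigorous). Both routes are valid; yours buys simplicity and avoids the measure-theoretic bookkeeping, while the paper's yields the slightly stronger pathwise picture of what $\beta_{k_j}$ does along a subsequence. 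You are also right to flag that ``in probability'' needs to be interpreted per-$k$ as $\pi(\beta_k<M\mid Z_k=t_k)\to 1$; the paper adopts exactly that reading.
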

\begin{proof}
	By Theorem \ref{theorem:pconv_beta}, $\beta_k-E(\beta_k|Z_k=t_k)$ converges to zero in probability. Hence, there exists a
	subsequence $\{k_j\}_{j=1}^{\infty}$ such that $\left\{\beta_{k_j}-E(\beta_{k_j}|Z_{k_j}=t_{k_j})\right\}_{j=1}^{\infty}$ converges to zero almost surely;
	see, for example,\ctn{Schervish95}.
	That is, given any $\epsilon>0$, there exists $j_0\geq 1$ such that for $j\geq j_0$, $\left|\beta_{k_j}-E(\beta_{k_j}|Z_{k_j}=t_{k_j})\right|<\epsilon$, 
	outside the null set, 
	Hence, $\beta_{k_j}>E(\beta_{k_j}|Z_{k_j}=t_{k_j})-\epsilon$, for $j\geq j_0$. Since $E(\beta_{k_j}|Z_{k_j}=t_{k_j})\rightarrow\infty$ as $j\rightarrow\infty$, 
	there can not exist any finite constant $M>0$ such that $\beta_k<M$ as $k\rightarrow\infty$, outside the null set. 
	
	For the ``in probability" part, note that since for any $\epsilon>0$, $\pi(E(\beta_k|Z_k=t_k)-\epsilon<\beta_k<E(\beta_k|Z_k=t_k)+\epsilon|Z_k=t_k)\rightarrow 1$
	as $k\rightarrow\infty$, it is clear that for any finite $M>0$, $\pi(\beta_k<M|Z_k=t_k)\rightarrow 0$, as $k\rightarrow\infty$.
\end{proof}

\begin{corollary}
	\label{cor:cor1}
With respect to our Bayesian theory, RH is false.
\end{corollary}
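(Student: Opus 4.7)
The plan is to read this Corollary as a straightforward consequence of Theorem \ref{theorem:n_bound} together with the Koch equivalence already recalled in Section \ref{subsec:precise_PNT}. Recall that by \ctn{Koch1901}, RH holds if and only if the error term in (\ref{eq:eq11}) admits the exponent $r=1/2$, i.e.\ $\varphi(x)=Li(x)+O(\sqrt{x}\log x)$; equivalently, there must exist a finite constant $M>0$ such that the coefficient multiplying $\sqrt{x}\log x$ in the remainder stays bounded by $M$ for all sufficiently large $x$. Our NHPP was designed precisely so that this coefficient is the parameter $\beta$, since $\Lambda_{\alpha,\beta}([2,x]) = \alpha Li(x) + \beta \sqrt{x}\log x$ and, by Corollary \ref{cor:cor_pnt0}, $\mathcal N([2,x])\sim\varphi(x)$ almost surely.

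The first step is to formulate the contrapositive cleanly. If RH were true in our framework, then the true $\beta$ governing the NHPP error term would have to be finite, and the Bayesian inference would have to reflect this by eventually concentrating its posterior mass for $\beta_k$ inside some bounded interval $(0,M)$ as more primes are conditioned upon. That is, RH being true would force the existence of a finite $M>0$ with $\pi(\beta_k<M|Z_k=t_k)\to 1$ as $k\to\infty$.

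The second step is to invoke Theorem \ref{theorem:n_bound} directly: it states that no such finite $M$ exists, neither almost surely along subsequences nor in probability. Indeed, Theorem \ref{theorem:pconv_beta} pinpoints the rate $E(\beta_k|Z_k=t_k)\sim \sqrt{k}/(\log k)^{3/2}\to\infty$, so the posterior mass of $\beta_k$ actually drifts to infinity, rather than merely failing to be bounded. Combining this with the contrapositive from the first step yields the Corollary.

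The only delicate point—and really the only thing that would require any care at all—is making sure the translation from Koch's analytic statement to our probabilistic statement is watertight: one must argue that Koch's $O(\sqrt{x}\log x)$ error, when fed into our parametric NHPP, forces a bounded $\beta$ rather than a $\beta_k$ growing in $k$. This is essentially a bookkeeping matter using the $\alpha_k\stackrel{P}{\longrightarrow}1$ conclusion of Theorem \ref{theorem:pconv_alpha} (so the $Li$-portion of the intensity already accounts for the main term) together with the a.s.\ equivalence of $\mathcal N([2,x])$ and $\Lambda_{\alpha,\beta}([2,x])$ established in Theorem \ref{theorem:embedding1}; once these are in place, any unbounded drift in $\beta_k$ is inconsistent with a $\sqrt{x}\log x$ error of bounded order, and RH is refuted within our Bayesian theory.
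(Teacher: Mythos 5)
Your proposal is correct and matches the paper's implicit reasoning: the paper states Corollary \ref{cor:cor1} without a separate proof, intending it as the immediate consequence of Theorem \ref{theorem:n_bound} combined with the Koch contrapositive from Section \ref{subsec:precise_PNT} (RH true $\Rightarrow$ $r=1/2$ with bounded implied constant, so an unbounded $\beta_k$ refutes RH), which is exactly what you spell out. One small overstatement to tidy up: the paper cites \ctn{Koch1901} only for the forward implication (RH $\Rightarrow$ $\varphi(x)=Li(x)+O(\sqrt{x}\log x)$), and only its contrapositive is needed here, so you should not attribute the full equivalence to Koch even though the converse is also classically known.
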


\section{Another evaluation of RH: falsification again}
\label{sec:another}
RH is equivalent to the statement (\ctn{titchmarsh1986zeta})
\begin{equation}
	\varphi(x)=Li(x)+O\left(x^{1/2+\epsilon}\right),~\mbox{for all}~\epsilon>0.
	\label{eq:rh2}
\end{equation}
To evaluate this statement, we now consider $$\Lambda_{\alpha,\beta}(x)=\int_A li(u)du+\int_A f(u)du,$$ where
\begin{equation}
f(x)=\left(\frac{1}{2}+\epsilon\right)x^{\epsilon-1/2}
	\label{eq:rh3}
\end{equation}
is the derivative of the error function $x^{1/2+\epsilon}$.

\begin{theorem}
	\label{theorem:conv_rh}
	Consider an NHPP with intensity function
\begin{equation}
	\Lambda_{\alpha,\beta}(A)=\alpha\int_A li(u)du+\beta\int_A f(u)du,
	\label{eq:intensity_measure_rh}
\end{equation}
	where the function $f(\cdot)$ is given by (\ref{eq:rh3}). 
	Let $\alpha_k$ and $\beta_k$ denote the stage-wise parameters corresponding to $\alpha$ and $\beta$ in (\ref{eq:intensity_measure_rh}).
	Then, with respect to the respective posteriors $\pi(\alpha_k|Z_k=t_k)$
	and $\pi(\beta_k|Z_k=t_k)$; $k\geq 1$, the following hold:
	\begin{equation}
		\alpha_k\stackrel{P}{\longrightarrow}1~\mbox{and}~\beta_k-E(\beta_k|Z_k=t_k)\stackrel{P}{\longrightarrow 0},~\mbox{as}~k\rightarrow\infty.
		\label{eq:conv_rh1}
	\end{equation}
	where
	\begin{equation}
		E(\beta_k|Z_k=t_k)\sim  \frac{k^{1/2-\epsilon}}{\left(\log k\right)^{1/2+\epsilon}}\rightarrow\infty,~\mbox{as}~k\rightarrow\infty.
		\label{eq:conv_rh2}
	\end{equation}
\end{theorem}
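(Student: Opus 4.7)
The plan is to follow the blueprint of Theorems \ref{theorem:pconv_alpha} and \ref{theorem:pconv_beta} almost verbatim, since the only structural change relative to Section \ref{sec:con} is the replacement of the error-function density $f(\cdot)$ inside the intensity measure. The closed-form posterior in (\ref{eq:post_k}) and its marginals (\ref{eq:post_alpha_k})--(\ref{eq:post_beta_k}) remain formally identical; one only has to re-evaluate the asymptotic size of the cumulative $B_2^{(i)}$ quantities. With the new $f(u)=(1/2+\epsilon)u^{\epsilon-1/2}$,
$$\sum_{i=0}^{k-1}B_2^{(i)}=\int_2^{t_k}f(u)\,du=t_k^{1/2+\epsilon}-2^{1/2+\epsilon},$$
which, invoking $t_k\sim k\log k$ from Theorem \ref{theorem:embedding2} (and its deterministic PNT counterpart for the observed primes $t_k=p_k$), is asymptotically $(k\log k)^{1/2+\epsilon}$.

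For the $\alpha_k$ portion, I would observe that nothing in Theorem \ref{theorem:pconv_alpha} depends on the particular choice of $f$: its argument uses only $\sum_{i=0}^{k-1}B_1^{(i)}=\int_2^{t_k}li(u)\,du\sim k$, which is unchanged here. The identical gamma-mixture pinching as in (\ref{eq:lim1})--(\ref{eq:varlim1}) then yields $E(\alpha_k\mid Z_k=t_k)\to 1$ and $\mathrm{Var}(\alpha_k\mid Z_k=t_k)\to 0$, hence $\alpha_k\stackrel{P}{\longrightarrow}1$.

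For the $\beta_k$ portion, I would mirror the proof of Theorem \ref{theorem:pconv_beta}. Writing
$$\mu_k^{(1)}=\frac{\xi+k-1}{b+\sum_{i=0}^{k-1}B_2^{(i)}},\qquad \mu_k^{(2)}=\frac{\xi+k}{b+\sum_{i=0}^{k-1}B_2^{(i)}},$$
the display (\ref{eq:postmean_beta_k}) expresses $E(\beta_k\mid Z_k=t_k)$ as a convex combination of $\mu_k^{(1)}$ and $\mu_k^{(2)}$; the new asymptotics give
$$\mu_k^{(1)}\sim\mu_k^{(2)}\sim\frac{k}{t_k^{1/2+\epsilon}}\sim\frac{k^{1/2-\epsilon}}{(\log k)^{1/2+\epsilon}},$$
yielding exactly (\ref{eq:conv_rh2}). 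For the variance, the same bound used in (\ref{eq:postvar_beta_k}) produces
$$\mathrm{Var}(\beta_k\mid Z_k=t_k)<\frac{3(\xi+k)-1}{\left(b+\sum_{i=0}^{k-1}B_2^{(i)}\right)^2}\sim\frac{k}{t_k^{1+2\epsilon}}\sim\frac{1}{k^{2\epsilon}(\log k)^{1+2\epsilon}}\to 0,$$
from which Chebyshev's inequality delivers $\beta_k-E(\beta_k\mid Z_k=t_k)\stackrel{P}{\longrightarrow}0$.

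The only real obstacle is the bookkeeping of powers of $t_k$ and $\log k$ through the substitutions, and the mild structural observation that both $\mu_k^{(1)}$ and $\mu_k^{(2)}$ have the same leading order, so the (unknown) mixture weights in (\ref{eq:postmean_beta_k}) are irrelevant for the asymptotic rate. The divergence claim (\ref{eq:conv_rh2}) requires $\epsilon<1/2$; since RH as formulated in (\ref{eq:rh2}) demands the error bound hold for \emph{all} $\epsilon>0$, falsification only needs one such $\epsilon\in(0,1/2)$, so this restriction is harmless for the intended RH refutation and indeed strengthens it.
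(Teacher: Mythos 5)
Your proposal is correct and follows essentially the same route the paper takes: reduce to the arguments of Theorems \ref{theorem:pconv_alpha} and \ref{theorem:pconv_beta}, with the only change being the recomputation of $\sum_{i=0}^{k-1}B_2^{(i)}=t_k^{1/2+\epsilon}$ (up to a constant) and the resulting asymptotics for $E(\beta_k\mid Z_k=t_k)$ and $\mathrm{Var}(\beta_k\mid Z_k=t_k)$. You also correctly flag that the divergence in (\ref{eq:conv_rh2}) implicitly requires $\epsilon<1/2$, a restriction the paper itself only makes explicit in Corollary \ref{cor:cor_rh}.
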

\begin{proof}
	The proof follows in the same way as the proofs of  Theorems \ref{theorem:pconv_alpha} and \ref{theorem:pconv_beta}, by noting that here
	$\sum_{i=0}^{k-1}B^{(i)}_2=t^{1/2+\epsilon}_k$ and that 
	$$E(\beta_k|Z_k=t_k)=\frac{k}{t^{1/2+\epsilon}_k}\sim\frac{k^{1/2-\epsilon}}{\left(\log k\right)^{1/2+\epsilon}},~\mbox{as}~k\rightarrow\infty.$$ 
	Also note that the form of $C^{(k-1)}_2$ here is different from that of Theorem \ref{theorem:pconv_beta},
	but the exact form is unimportant for the proof. 
\end{proof}
\begin{corollary}
	\label{cor:cor_rh}
	When $0<\epsilon<1/2$,
	$$E(\beta_k|Z_k=t_k)\rightarrow\infty,~\mbox{as}~k\rightarrow\infty,$$
	and hence, the conclusion of Theorem \ref{theorem:n_bound} holds once again, providing strong evidence against RH. 
	In other words, we again conclude that with respect to our Bayesian theory, RH is false.
\end{corollary}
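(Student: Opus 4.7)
The plan is to chain together the two results just established and then interpret the conclusion through the equivalent formulation (\ref{eq:rh2}) of RH. The first step is a trivial calculus observation: Theorem \ref{theorem:conv_rh} already gives $E(\beta_k|Z_k=t_k)\sim k^{1/2-\epsilon}/(\log k)^{1/2+\epsilon}$, and since the exponent $1/2-\epsilon$ is strictly positive throughout the range $0<\epsilon<1/2$, the polynomial numerator dominates the poly-logarithmic denominator and forces the posterior mean to diverge to infinity. So the first assertion of the corollary is immediate.

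Next, I would replay the argument of Theorem \ref{theorem:n_bound} verbatim in this setting. The in-probability statement $\beta_k-E(\beta_k|Z_k=t_k)\stackrel{P}{\longrightarrow}0$ from Theorem \ref{theorem:conv_rh} delivers an almost-surely convergent subsequence $\{\beta_{k_j}-E(\beta_{k_j}|Z_{k_j}=t_{k_j})\}_{j=1}^{\infty}$. Combined with Step 1, for any proposed finite bound $M>0$ one eventually has $\beta_{k_j}>M$ along the subsequence outside a null set; likewise, since the posterior mass of $\beta_k$ concentrates around a diverging mean, one obtains $\pi(\beta_k<M|Z_k=t_k)\to 0$ as $k\to\infty$. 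Thus neither an almost-sure nor an in-probability finite bound on $\beta_k$ can hold, which is exactly the conclusion of Theorem \ref{theorem:n_bound}.

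To close the loop with RH, I would recall that the intensity function (\ref{eq:intensity_measure_rh}) was engineered precisely so that $\beta\int_A f(u)\,du$ tracks the candidate error term $\beta x^{1/2+\epsilon}$ in the equivalent RH statement (\ref{eq:rh2}). If RH held, then for every $\epsilon>0$ the error would admit a bounded leading coefficient, and this would have to manifest on the Bayesian side as a bounded posterior for $\beta_k$. Since instead the posterior forces $\beta_k$ to escape every finite bound for each $\epsilon\in(0,1/2)$, the Bayesian theory falsifies (\ref{eq:rh2}), hence RH.

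The main obstacle is essentially nonexistent, since all the analytic work was done in Theorems \ref{theorem:conv_rh} and \ref{theorem:n_bound}; the proof is a bookkeeping exercise. The only mildly subtle point worth flagging is that RH demands the error bound for \emph{all} $\epsilon>0$, while refutation requires failure at just a single admissible $\epsilon$; happily, Theorem \ref{theorem:conv_rh} produces divergence uniformly across the whole range $0<\epsilon<1/2$, so the falsification is in fact stronger than what is strictly needed.
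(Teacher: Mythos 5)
Your proposal is correct and follows the same route the paper intends: read the divergence of $E(\beta_k|Z_k=t_k)$ directly off (\ref{eq:conv_rh2}) under the restriction $0<\epsilon<1/2$ (so that $k^{1/2-\epsilon}$ dominates the poly-log), then transport the argument of Theorem \ref{theorem:n_bound} unchanged to conclude that $\beta_k$ escapes every finite bound, which via the equivalent statement (\ref{eq:rh2}) refutes RH. Your closing remark — that refuting (\ref{eq:rh2}) only requires failure at a single $\epsilon$, whereas the theorem delivers it for the whole interval — is a genuine sharpening of the logic that the paper leaves implicit.
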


\section{Construction of an upper bound for the prime counting function with our Bayesian theory}
\label{eq:upper_bound}
By Corollary \ref{cor:cor1}, RH fails to be true, at least with respect to our Bayesian theory. The error term $O(\sqrt{x}\log x)$ turns out to be
too small to be true. As such, we now attempt to furnish an appropriate error term.

Since $E(\beta_k|Z_k=t_k)\sim\frac{k^2}{t^{3/2}_k}\sim\frac{\sqrt{k}}{\left(\log k\right)^{3/2}}\rightarrow\infty$ as $k\rightarrow\infty$, it makes 
sense to multiply this asymptotic form with the existing error term $\sqrt{t_k}\log t_k\sim \sqrt{k}\log k$ to obtain $\frac{k}{\sqrt{\log k}}$, that is,
$\int_2^xf(u) du=\frac{x}{\sqrt{\log x}}$ in (\ref{eq:intensity_measure}).
However, this modification yields $E(\beta_k|Z_k=t_k)\rightarrow 0$ and $Var(\beta_k|Z_k=t_k)\rightarrow 0$, as $k\rightarrow\infty$, that is,
$\beta_k\stackrel{P}{\longrightarrow}0$, with respect to the posteriors of $\beta_k$; $k\geq 1$. In other words, this modified error terms turns out to be too large.

However, instead of multiplying the current error term by $\frac{\sqrt{k}}{\left(\log k\right)^{3/2}}$, if it is multiplied by 
$\frac{\sqrt{k}}{\left(\log k\right)^{2}}$, the power of the denominator being the ceiling of that of the former, then the new error term becomes
$\frac{x}{\log x}$. This yields desired convergence of the parameters formalized by the theorem below.

\begin{theorem}
	\label{theorem:conv2}
	Consider an NHPP with intensity function
\begin{equation}
	\Lambda_{\alpha,\beta}(A)=\alpha\int_A li(u)du+\beta\int_A f(u)du,
	\label{eq:intensity_measure2}
\end{equation}
	where the function $f(\cdot)$ is such that $$\int_2^xf(u)du=\frac{x}{\log x}.$$ 
	Let $\alpha_k$ and $\beta_k$ denote the stage-wise parameters corresponding to $\alpha$ and $\beta$ in (\ref{eq:intensity_measure2}).
	Then, with respect to the respective posteriors $\pi(\alpha_k|Z_k=t_k)$
	and $\pi(\beta_k|Z_k=t_k)$; $k\geq 1$, the following hold:
	\begin{equation}
		\alpha_k\stackrel{P}{\longrightarrow}1~\mbox{and}~\beta_k\stackrel{P}{\longrightarrow 1},~\mbox{as}~k\rightarrow\infty.
		\label{eq:conv2}
	\end{equation}
\end{theorem}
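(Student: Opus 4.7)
The plan is to follow the template set by the proofs of Theorems \ref{theorem:pconv_alpha}, \ref{theorem:pconv_beta}, and \ref{theorem:conv_rh}. The intensity measure has exactly the same structural form, so the recursive updates of Section \ref{sec:recursive_details} and the closed-form mixture-of-gammas posterior (\ref{eq:post_k}) go through verbatim. The two marginal posteriors $\pi(\alpha_k|Z_k=t_k)$ and $\pi(\beta_k|Z_k=t_k)$ are again each a two-component mixture of gammas whose shape parameters are $\gamma+k-1,\gamma+k$ and $\xi+k-1,\xi+k$, and whose rate parameters are $a+\sum_{i=0}^{k-1}B^{(i)}_1$ and $b+\sum_{i=0}^{k-1}B^{(i)}_2$ respectively. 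For each parameter I would therefore compute the posterior mean and the posterior second moment, show that both tend to $1$ as $k\rightarrow\infty$, deduce that the posterior variance tends to zero, and then conclude convergence in probability by Chebyshev's inequality (exactly as on the last line of Theorem \ref{theorem:pconv_alpha}).

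For $\alpha_k$, the proof of Theorem \ref{theorem:pconv_alpha} transfers without modification, because the $\alpha$-relevant quantities depend only on $\sum_{i=0}^{k-1}B^{(i)}_1=\int_2^{t_k}li(u)du$, which is unchanged by the choice of $f$. In particular (\ref{eq:no_primes}) still gives $\sum_{i=0}^{k-1}B^{(i)}_1\sim k$, and hence both ratios $(\gamma+k)/(a+\sum B^{(i)}_1)$ and $(\gamma+k-1)/(a+\sum B^{(i)}_1)$ tend to $1$, yielding $E(\alpha_k|Z_k=t_k)\rightarrow 1$, $E(\alpha_k^2|Z_k=t_k)\rightarrow 1$, and thus $\alpha_k\stackrel{P}{\longrightarrow}1$.

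The new content is the $\beta_k$ part, which hinges on a single computation. Under the hypothesis $\int_2^x f(u)\,du=x/\log x$, we have
\begin{equation*}
\sum_{i=0}^{k-1}B^{(i)}_2 \;=\; \int_2^{t_k} f(u)\,du \;=\; \frac{t_k}{\log t_k}.
\end{equation*}
Combining this with $t_k\sim k\log k$ from Theorem \ref{theorem:embedding2} (equivalently, Corollary \ref{cor:cor_pnt}), we obtain the crucial estimate $\sum_{i=0}^{k-1}B^{(i)}_2\sim k$, which makes the rate denominator grow at exactly the same order as the shape numerators. Consequently the two rate-shape ratios
\begin{equation*}
\frac{\xi+k-1}{b+\sum_{i=0}^{k-1}B^{(i)}_2}\quad\mbox{and}\quad \frac{\xi+k}{b+\sum_{i=0}^{k-1}B^{(i)}_2}
\end{equation*}
both tend to $1$, which is exactly the balance that was absent in Theorems \ref{theorem:pconv_beta} and \ref{theorem:conv_rh}, where the ratios diverged.

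Substituting these limits into the analogues of (\ref{eq:postmean_beta_k}) and the second-moment expressions (\ref{eq:beta_approx6}), both the posterior mean and the posterior second moment of $\beta_k$ converge to $1$; therefore $Var(\beta_k|Z_k=t_k)\rightarrow 0$, and Chebyshev yields $\beta_k\stackrel{P}{\longrightarrow}1$. The main obstacle, modest as it is, is verifying that the mixing weights $R_k C^{(k-1)}_1 \cdot(\cdots)$ and $R_k C^{(k-1)}_2\cdot(\cdots)$ appearing in (\ref{eq:post_beta_k}) sum to $1$ in the limit without skewing the argument; but since both mixture components have shape and rate parameters that concentrate at the same value (giving the same limiting mean and the same limiting second moment), the mixing weights are irrelevant to the limits, and the conclusion follows. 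Hence (\ref{eq:conv2}) holds.
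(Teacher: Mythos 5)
Your proposal is correct and takes essentially the same route as the paper: the paper's proof simply notes that $\sum_{i=0}^{k-1}B^{(i)}_2=t_k/\log t_k$ and refers back to the mean/variance computations of Theorems~\ref{theorem:pconv_alpha} and~\ref{theorem:pconv_beta}. You supply precisely the missing details, in particular the key observation that $t_k/\log t_k\sim k$ (equivalently via (\ref{eq:no_primes}) or via $t_k\sim k\log k$) forces the gamma rate to grow at the same order as the shape, so both mixture-component ratios tend to~$1$, and you correctly note that the exact form of $C^{(k-1)}_2$ and hence the mixing weights are immaterial once both components concentrate at the same limit.
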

\begin{proof}
	Noting that here $\sum_{i=0}^{k-1}B^{(i)}_2=\frac{t_k}{\log t_k}$, the proof follows in the same way as before by showing that 
	$E(\alpha_k|Z_k=t_k)\rightarrow 1$, $Var(\alpha_k|Z_k=t_k)\rightarrow 0$ and
	$E(\beta_k|Z_k=t_k)\rightarrow 1$, $Var(\beta_k|Z_k=t_k)\rightarrow 0$, as $k\rightarrow\infty$.
	Note that although the form of $C^{(k-1)}_2$ here depends upon the error function $\frac{x}{\log x}$, the actual form is not required for the proof.
\end{proof}
The following remark seems to be quite important from the number-theoretic perspective.
\begin{remark}
	\label{remark:remark1}
	Theorem \ref{theorem:conv2} 
	asserts that in the ``in probability" sense, $\Lambda_{\alpha,\beta}((0,x])$ must be $Li(x)+O\left(\frac{x}{\log x}\right)$, as $x\rightarrow\infty$.
        Now replacing $\alpha$ and $\beta$ by the recursive parameter notation $\alpha_k$ and $\beta_k$, observe that the randomness of $\alpha_k$ and $\beta_k$
	includes the true (deterministic) prime-counting function $\varphi(x)$ in the support of our random intensity measure. 
	Hence, the above argument 
	suggests that
	\begin{equation}
		\varphi(x)=Li(x)+O\left(\frac{x}{\log x}\right),~\mbox{as}~x\rightarrow\infty.
		\label{eq:pi1}
	\end{equation}
	Although the above deterministic result (\ref{eq:pi1}) has not been proved by us, it is a result that received strong evidence by our Bayesian asymptotic theory. 
\end{remark}
\begin{remark}
	\label{eqmark:remark_another}
Now what happens if we wish to consider the theory in Section \ref{sec:another} to similarly construct an error bound?	Here, at stage $k$, it is
given by $E(\beta_k|Z_k=t_k)\times t^{1/2+\epsilon}_k=\frac{k}{t^{1/2+\epsilon}}\times t^{{1/2}+\epsilon}=k$, that is, $\int_2^x f(u)du=x$, which is clearly
a much larger error bound compared to $x/\log x$ derived above, and hence, not useful. Indeed, with this error bound, it is easily seen that
$\beta_k\stackrel{P}{\longrightarrow}0$.
\end{remark}

\section{Sharper error bound with Bayesian theory?}
\label{sec:sharp}
It has been proved in $1899$ by de la Vall\'{e}e Poussin (see Theorem 23 of \ctn{Ing2000}), that
\begin{equation*}
	\varphi(x)=Li(x)+O\left(x\exp\left\{-a\sqrt{\log x}\right\}\right)~\mbox{as}~x\rightarrow\infty,
\end{equation*}
for some $a>0$.
\ctn{Ford02} showed that
\begin{equation*}
	\varphi(x)=Li(x)+O\left(x\exp\left\{-0.2098\left(\log x\right)^{3/5}\left(\log\log x\right)^{-1/5}\right\}\right)~\mbox{as}~x\rightarrow\infty,
\end{equation*}
However, the sharpest and explicit bound has been obtained by \ctn{Moss15}, as follows:
\begin{equation}
	|\varphi(x)-Li(x)|\leq 0.2593\frac{x}{(\log x)^{3/4}}\exp\left(-\sqrt{\frac{\log x}{6.315}}\right)~\mbox{for}~x\geq 229.
	\label{eq:moss1}
\end{equation}
As such, we shall consider the bound in (\ref{eq:moss1}) for further investigation for our purpose.
This bound is of course established and requires no validation. Rather, here our purpose is to provide an evaluation of our Bayesian theory
and illustrate the possible ramifications.

With respect to this, we formalize our Bayesian 
result with the theorem below. 

\begin{theorem}
	\label{theorem:conv3}
	Consider an NHPP with intensity function
\begin{equation}
	\Lambda_{\alpha,\beta}(A)=\alpha\int_A li(u)du+\beta\int_A f(u)du,
	\label{eq:intensity_measure3}
\end{equation}
	where the function $f(\cdot)$ is such that 
	\begin{equation}
	\int_2^xf(u)du=\frac{x}{(\log x)^{3/4}}\exp\left(-\sqrt{\frac{\log x}{6.315}}\right).
		\label{eq:bound_moss}
	\end{equation}
	Let $\alpha_k$ and $\beta_k$ denote the stage-wise parameters corresponding to $\alpha$ and $\beta$ in (\ref{eq:intensity_measure3}).
	Then, with respect to the respective posteriors $\pi(\alpha_k|Z_k=t_k)$
	and $\pi(\beta_k|Z_k=t_k)$; $k\geq 1$, the following holds:
	\begin{equation}
		\alpha_k\stackrel{P}{\longrightarrow}1~\mbox{and}~\beta_k-E(\beta_k|Z_k=t_k)\stackrel{P}{\longrightarrow 0},~\mbox{as}~k\rightarrow\infty.
		\label{eq:conv3}
	\end{equation}
	where
	\begin{equation}
		\log\left[E(\beta_k|Z_k=t_k)\right]\sim\sqrt{\frac{\log k}{6.315}}-\frac{1}{4}\log\log k\rightarrow\infty,~\mbox{as}~k\rightarrow\infty.
		\label{eq:conv4}
	\end{equation}
\end{theorem}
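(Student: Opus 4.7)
My plan is to mimic the proofs of Theorems \ref{theorem:pconv_alpha} and \ref{theorem:pconv_beta} almost verbatim, since the recursive marginal posterior formulas in (\ref{eq:post_alpha_k}) and (\ref{eq:post_beta_k}) depend on the error function $f(\cdot)$ only through the aggregate $\sum_{i=0}^{k-1}B^{(i)}_2=\int_2^{t_k}f(u)du$ and through $C^{(k-1)}_2$, which, as in the parent theorems, drops out of the ratio argument used to sandwich the posterior moments. Consequently, the $\alpha_k\stackrel{P}{\longrightarrow}1$ half of the statement is immediate: the cumulant $\sum_{i=0}^{k-1}B^{(i)}_1=\int_2^{t_k}li(u)du\sim k$ as in (\ref{eq:no_primes}) is unchanged, so the argument culminating in (\ref{eq:meanlim1})--(\ref{eq:varlim1}) transfers without modification, yielding both $E(\alpha_k|Z_k=t_k)\to 1$ and $Var(\alpha_k|Z_k=t_k)\to 0$.

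For $\beta_k$, the first step is to evaluate the new aggregate using (\ref{eq:bound_moss}) and the asymptotic $t_k\sim k\log k$ from Theorem \ref{theorem:embedding2}, which gives
\begin{equation*}
\sum_{i=0}^{k-1}B^{(i)}_2=\frac{t_k}{(\log t_k)^{3/4}}\exp\left(-\sqrt{\frac{\log t_k}{6.315}}\right)\sim k(\log k)^{1/4}\exp\left(-\sqrt{\frac{\log k}{6.315}}\right).
\end{equation*}
Next, reproducing the sandwich inequality (\ref{eq:beta_approx3}) with the two gamma means $\mu^{(1)}_k=(\xi+k-1)/(b+\sum B^{(i)}_2)$ and $\mu^{(2)}_k=(\xi+k)/(b+\sum B^{(i)}_2)$, both equivalent to $k/\sum B^{(i)}_2$, I would conclude $E(\beta_k|Z_k=t_k)\sim k/\sum_{i=0}^{k-1}B^{(i)}_2$. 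Taking logarithms and substituting the display above then yields (\ref{eq:conv4}) directly, since the $k$ in the numerator cancels the leading $k$ from the aggregate and the residual factor is $(\log k)^{-1/4}\exp(\sqrt{\log k/6.315})$.

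The final and only nontrivial step is the variance bound. As in (\ref{eq:postvar_beta_k}), one controls $Var(\beta_k|Z_k=t_k)$ above by $\mu^{(2)}_{2k}-(\mu^{(1)}_k)^2=(3(\xi+k)-1)/\left(b+\sum_{i=0}^{k-1}B^{(i)}_2\right)^2$, which here is asymptotically $3k/\left(k(\log k)^{1/4}\exp(-\sqrt{\log k/6.315})\right)^{2}$, i.e.\ of order $k^{-1}(\log k)^{-1/2}\exp\left(2\sqrt{\log k/6.315}\right)$. The main obstacle is verifying that this tends to $0$ despite $E(\beta_k|Z_k=t_k)\to\infty$; the key observation is that $\exp(c\sqrt{\log k})=o(k^{\epsilon})$ for every $c>0$ and every $\epsilon>0$, since $c\sqrt{\log k}-\epsilon\log k\to-\infty$. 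Thus $Var(\beta_k|Z_k=t_k)\to 0$, and Chebyshev's inequality applied to $\beta_k-E(\beta_k|Z_k=t_k)$ (whose conditional mean is zero) completes the proof of $\beta_k-E(\beta_k|Z_k=t_k)\stackrel{P}{\longrightarrow}0$. The delicacy is entirely in this sub-polynomial-versus-polynomial comparison, which is what allows the posterior of $\beta_k$ to concentrate around a diverging center.
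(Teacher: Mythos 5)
Your proposal is correct and follows essentially the same route as the paper: the $\alpha_k$ half is unchanged from Theorem \ref{theorem:pconv_alpha}, the posterior mean of $\beta_k$ is pinned down via the sandwich bound (\ref{eq:beta_approx3}) applied with the new aggregate $\sum_{i=0}^{k-1}B^{(i)}_2$, and the variance is controlled exactly as in (\ref{eq:postvar_beta_k}). The only difference is that the paper abbreviates the variance step with "the rest follows as in Theorem \ref{theorem:pconv_beta}," whereas you usefully spell out the decisive observation that $\exp\bigl(c\sqrt{\log k}\bigr)=o(k^{\epsilon})$ for all $c,\epsilon>0$, which is precisely why $Var(\beta_k|Z_k=t_k)\rightarrow 0$ even though $E(\beta_k|Z_k=t_k)\rightarrow\infty$.
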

\begin{proof}
That $\alpha_k\stackrel{P}{\longrightarrow}1~$ as $k\rightarrow\infty$ follows in the same way as before. For the result on $\beta_k$, note that 
	the expression for $E(\beta_k|Z_k=t_k)$ remains the same as (\ref{eq:postmean_beta_k})
	but with $\sum_{i=0}^{k-1}B^{(i)}_2=\frac{t_k}{(\log t_k)^{3/4}}\exp\left(-\sqrt{\frac{\log t_k}{6.315}}\right)$. The form of $C^{(k-1)}_2$ is now different
	since it is based on a different expression for $f(\cdot)$, but the exact form is not necessary for this proof.

	Using $t_k\sim k\log k$, it follows that as $k\rightarrow\infty$,
	\begin{align}
		&\mu^{(1)}_k=\frac{\xi+k-1}{b+\sum_{i=0}^{k-1}B^{(i)}_2}\sim \left(\log k\right)^{-1/4}\exp\left(\sqrt{\frac{\log t_k}{6.315}}\right);\notag\\ 
		&\mu^{(2)}_k=\frac{\xi+k}{b+\sum_{i=0}^{k-1}B^{(i)}_2}\sim \left(\log k\right)^{-1/4}\exp\left(\sqrt{\frac{\log t_k}{6.315}}\right).\notag 
	\end{align}
	Hence, for any $\epsilon>0$, there exists $k_0\geq 1$ such that for $k\geq k_0$,
	\begin{equation}
		(1-\epsilon)\left(\log k\right)^{-1/4}\exp\left(\sqrt{\frac{\log t_k}{6.315}}\right)<\mu^{(i)}_k
		<(1+\epsilon)\left(\log k\right)^{-1/4}\exp\left(\sqrt{\frac{\log t_k}{6.315}}\right),~\mbox{for}~i=1,2.
		\label{eq:beta_new1}
	\end{equation}
	Applying (\ref{eq:beta_new1}) to the expression for $E(\beta_k|Z_k=t_k)$, it follows that
	\begin{equation}
		E(\beta_k|Z_k=t_k)\sim \left(\log k\right)^{-1/4}\exp\left(\sqrt{\frac{\log t_k}{6.315}}\right),~\mbox{as}~k\rightarrow\infty.
		\label{eq:beta_new2}
	\end{equation}
	Taking log of the right hand side of (\ref{eq:beta_new2}), which we denote by $\nu_k$, gives
	\begin{equation}
		\log\nu_k=\sqrt{\frac{\log t_k}{6.315}}-\frac{1}{4}\log\log k\sim \sqrt{\frac{\log k}{6.315}}-\frac{1}{4}\log\log k.
		\label{eq:beta_new3}
	\end{equation}
	The second asymptotic equivalence holds because $t_k\sim k\log k$ and 
	$$\frac{\log\log k}{\sqrt{\frac{\log k}{6.315}}}\rightarrow 0,~\mbox{as}~k\rightarrow\infty.$$
	It follows from (\ref{eq:beta_new3}) that
	$$\log \left[E(\beta_k|Z_k=t_k)\right]\sim \sqrt{\frac{\log k}{6.315}}-\frac{1}{4}\log\log k\rightarrow\infty,~\mbox{as}~k\rightarrow\infty.$$

The rest of the proof follows in the same way as that of Theorem \ref{theorem:pconv_beta}.
\end{proof}

	Theorem \ref{theorem:conv3} shows that $\beta_k$ are not even bounded in probability with respect to $\pi(\beta_k|Z_k=t_k)$, as $k\rightarrow\infty$.
	However, the rate of divergence of $E(\beta_k|Z_k=t_k)$ to infinity, given by 
$\left(\log k\right)^{-1/4}\exp\left(\sqrt{\frac{\log t_k}{6.315}}\right)$, is
	``extremely" slow, particularly with respect to $E(\beta_k|Z_k=t_k)\sim \frac{\sqrt{k}}{\left(\log k\right)^{3/2}}$, associated with the error bound 
	$\sqrt{x}\log x$, pertaining to RH. Indeed, Table \ref{table:compare} provides a comparison of the asymptotic forms for some large values of $k$,
explicitly pointing towards the enormity of the difference in the divergence speeds.
	
\begin{table}
	\caption{Comparison of the asymptotic forms of $E(\beta_k|Z_k=t_k)$.}
\label{table:compare}
\begin{center}
\begin{tabular}{|c||c|c|}\hline
	$k$ & $\frac{\sqrt{k}}{\left(\log k\right)^{3/2}}$ &  $\left(\log k\right)^{-1/4}\exp\left(\sqrt{\frac{\log t_k}{6.315}}\right)$\\
\hline
	$10^{10}$ & $905.058$ & $3.081$\\
	$10^{50}$ & $8.095\times 10^{21}$ & $21.830$\\
	$10^{100}$ & $2.862\times 10^{46}$ & $107.618$\\
	$10^{150}$ & $1.558\times 10^{71}$ & $377.783$\\
	$10^{200}$ & $1.012\times 10^{96}$ & $1103.769$\\
	$10^{250}$ & $7.240\times 10^{120}$ & $2860.237$\\
	$10^{300}$ & $5.508\times 10^{145}$ & $6797.735$\\
	$10^{350}$ & $4.371\times 10^{170}$ & $15120.430$\\
	$10^{400}$ & $3.578\times 10^{195}$ & $31901.430$\\
	$10^{450}$ & $2.998\times 10^{220}$ & $64443.220$\\
	$10^{500}$ & $2.560\times 10^{245}$ & $125503.700$\\
\hline
\end{tabular}
\end{center}
\end{table}

\begin{remark}
	\label{remark:remark3}
Multiplying the rate of divergence $(\log t_k)^{-1/4}\exp\left(\sqrt{\frac{\log t_k}{6.315}}\right)$ with the error bound 
	$\frac{t_k}{(\log t_k)^{3/4}}\exp\left(-\sqrt{\frac{\log t_k}{6.315}}\right)$ gives $\frac{t_k}{\log t_k}\sim \frac{k}{\log k}$.
	Thus, the new adjusted error bound become $\frac{x}{\log x}$, which the same as that considered in Theorem \ref{theorem:conv2}, for which 
	$\beta_k\stackrel{P}{\longrightarrow}1$, as $k\rightarrow\infty$, re-asserting the result (\ref{eq:pi1}). It thus seems that sharper bounds
	for $\varphi(x)$ are not possible with our Bayesian theory.
\end{remark}

\section{Bayesian asymptotic model comparison}
\label{sec:compare}
Although we derive $O\left(\frac{x}{\log x}\right)$ as a valid error bound, the smaller upper bound  
$\frac{x}{(\log x)^{3/4}}\exp\left(-\sqrt{\frac{\log x}{6.315}}\right)$ is not unacceptable, since $E(\beta_k|Z_k=t_k)$ diverges at an extremely slow rate.
The issue then raises the question as to which upper bound to exploit in the Poisson process model.
The smaller bound, if acceptable, is certainly preferable, and we indeed validate this with asymptotic theory of our recursive posterior predictive distributions.

Note that (\ref{eq:post_pred_k}) admits the following representation:
\begin{align}
	&\pi(Z_{k+1}=t_{k+1}|Z_k=t_k)\notag\\	
	&\qquad
	=\frac{\Gamma(\gamma+k-1)}{\left(a+\sum_{i=0}^{k}B^{(i)}_1\right)^{\gamma+k-1}} 
	\times\frac{\Gamma(\xi+k-1)}{\left(b+\sum_{i=0}^{k}B^{(i)}_2\right)^{\xi+k-1}}\notag\\
	&\qquad \times\left[R_kC^{(k)}_1C^{(k-1)}_1\times
	\frac{\gamma+k}{\left(a+\sum_{i=0}^{k}B^{(i)}_1\right)}\times  \frac{\gamma+k-1}{\left(a+\sum_{i=0}^{k}B^{(i)}_1\right)}\right.\notag\\ 
	&\qquad\left. +R_kC^{(k)}_2C^{(k-1)}_1\times
	\frac{\gamma+k-1}{\left(a+\sum_{i=0}^{k}B^{(i)}_1\right)} 
	\times\frac{\xi+k-1}{\left(b+\sum_{i=0}^{k}B^{(i)}_2\right)}\right.\notag\\
	&\qquad \left. +R_kC^{(k)}_1C^{(k-1)}_2\times
	\frac{\gamma+k-1}{\left(a+\sum_{i=0}^{k}B^{(i)}_1\right)} 
	\times\frac{\xi+k-1}{\left(b+\sum_{i=0}^{k}B^{(i)}_2\right)}\right.\notag\\
	&\qquad \left. +R_kC^{(k)}_2C^{(k-1)}_2\times
	\frac{\xi+k}{\left(b+\sum_{i=0}^{k}B^{(i)}_2\right)} 
	\times\frac{\xi+k-1}{\left(b+\sum_{i=0}^{k}B^{(i)}_2\right)}\right].
	\label{eq:post_pred2}
\end{align}
When the error term is $\frac{x}{(\log x)^{3/4}}\exp\left(-\sqrt{\frac{\log x}{6.315}}\right)$, 
\begin{equation}
\sum_{i=0}^kB^{(i)}_2=\frac{t_{k+1}}{(\log t_{k+1})^{3/4}}\exp\left(-\sqrt{\frac{\log t_{k+1}}{6.315}}\right),
	\label{eq:bound_M1}
\end{equation}
and when the error bound is $\frac{x}{\log x}$, then
\begin{equation}
	\sum_{i=0}^kB^{(i)}_2=\frac{t_{k+1}}{\log t_{k+1}}.
	\label{eq:bound_M2}
\end{equation}
We denote the models associated with (\ref{eq:bound_M1}) and (\ref{eq:bound_M2}) by $\mathcal M_1$ and $\mathcal M_2$, respectively.
Let the associated posterior predictive densities at $t=t_{k+1}$ be denoted by $\pi(Z_{k+1}=t_{k+1}|Z_k=t_k,\mathcal M_1)$ and
$\pi(Z_{k+1}=t_{k+1}|Z_k=t_k,\mathcal M_2)$, respectively. Then we have the following result 
\begin{theorem}
	\label{theorem:postpred}
$$\frac{\pi(Z_{k+1}=t_{k+1}|Z_k=t_k,\mathcal M_1)}{\pi(Z_{k+1}=t_{k+1}|Z_k=t_k,\mathcal M_2)}
	\rightarrow\infty,~\mbox{as}~k\rightarrow\infty.$$
\end{theorem}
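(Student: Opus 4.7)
The plan is to compute the ratio in closed form directly from representation (\ref{eq:post_pred2}) and then to expand asymptotically using the parameter-convergence results from the preceding sections. First, I would factor out common quantities: since $L_k,L'_k,C^{(k)}_1,C^{(k-1)}_1$ depend only on $li(\cdot)$ and the prime sequence, they are identical under $\mathcal{M}_1$ and $\mathcal{M}_2$, and hence all $L$-dependent factors cancel in the ratio. The remaining pieces all involve the model-specific quantities $M_{k,i},M'_{k,i},C^{(k)}_{2,i},C^{(k-1)}_{2,i}$ and the normalising constant $R_{k,i}$. Writing each predictive in the compact form $\pi(Z_{k+1}|Z_k,\mathcal{M}_i)=(L_k/L'_k)^{\gamma+k-1}(M_{k,i}/M'_{k,i})^{\xi+k-1}\cdot U_{k,i}/S_{k,i}$ (obtained by absorbing $R_{k,i}$ into the prefactor $\Gamma(\xi+k-1)/(M'_{k,i})^{\xi+k-1}$) makes the common factor cancellation transparent.

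Next I would insert the asymptotics. Theorems~\ref{theorem:conv2} and \ref{theorem:conv3} give $\hat\beta_{k,2}\to 1$ under $\mathcal{M}_2$ but $\hat\beta_{k,1}\sim\nu_k:=(\log k)^{-1/4}\exp(\sqrt{\log k/6.315})\to\infty$ under $\mathcal{M}_1$, while Theorem~\ref{theorem:pconv_alpha} gives $\hat\alpha_k\to 1$ in both. Since $M_{k,i}\sim(\xi+k-1)/\hat\beta_{k,i}$, one has $M_{k,2}/M_{k,1}\sim\nu_k$, and this ratio appears raised to the power $\xi+k-1\sim k$ in the prefactor of (\ref{eq:post_pred2}), producing a super-polynomial factor of order $\nu_k^{\,k+O(1)}$ in the ratio of predictives. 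A short calculation of the bracketed four-term sum yields, at leading order, $U_{k,i}/S_{k,i}\sim\hat\alpha_k C^{(k)}_1+\hat\beta_{k,i}C^{(k)}_{2,i}$, which is of the same order $1/\log k$ in both models; so the residual piece does not cancel the super-polynomial amplification, and the ratio diverges.

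The principal obstacle is the bookkeeping required to make the unmatched power of $\nu_k$ fully explicit. At leading order each of the exponential-like factors $(M_{k,i}/M'_{k,i})^{\xi+k-1}$ separately converges to a finite limit, so the divergence must be tracked jointly across the prefactor, the normaliser $R_{k,i}$, and the four bracketed terms, each of which carries high powers of $M_{k,i}$ or $M'_{k,i}$ that nearly (but not fully) cancel between the two models. A careful second-order Taylor expansion of $(1-B^{(k)}_{2,i}/M'_{k,i})^{\xi+k-1}$, combined with the explicit forms $f_1(t)\sim(\log t)^{-3/4}\exp(-\sqrt{\log t/6.315})$ and $f_2(t)\sim 1/\log t$, should expose how the slowly-decaying correction associated with $f_1$, once multiplied by $\xi+k-1\sim k$, dominates the corresponding correction for $f_2$, leaving exactly one unmatched super-polynomial factor in the final ratio. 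The cleanest write-up would reparameterise everything through $\hat\alpha_k,\hat\beta_{k,i}$ and the common $L$-quantities so that the many asymptotically cancelling pieces are handled in a single line, isolating the residual divergent factor that proves the theorem.
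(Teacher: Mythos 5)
Your proposal correctly identifies what the paper's proof overlooks, but the rescue you hope for does not materialize.

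The paper's argument proceeds by writing $\pi(\cdot\mid\mathcal M_i)=\text{prefactor}_i\times\text{bracket}_i$ as in (\ref{eq:post_pred2}), then asserting $\text{bracket}_1\geq 1$ (last step of (\ref{eq:post_pred3})) and $\text{bracket}_2\leq(\log k)^2$ (last step of (\ref{eq:post_pred4})), so that the ratio is controlled by the prefactors alone. But $\text{bracket}_i$ contains $R_{k,i}$, and
\begin{equation*}
R_{k,i}\propto\Bigl(b+\textstyle\sum_{j=0}^{k-1}B^{(j)}_{2,i}\Bigr)^{\xi+k-1}\times\bigl(\text{an }O(\log t_k)\text{ factor}\bigr),
\end{equation*}
so $R_{k,1}/R_{k,2}$ is super-polynomially small, and the claimed bound $\text{bracket}_1\geq1$ fails. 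In other words, precisely the quantity that makes $\text{prefactor}_1/\text{prefactor}_2$ explode is nearly cancelled inside the brackets; the paper's two inequalities are unjustified, and the step that produces the super-polynomial factor is illusory. You see this, and your ``absorb $R_{k,i}$ into the prefactor'' reorganization, which leaves $(L/L')^{\gamma+k-1}(M_{k,i}/M'_{k,i})^{\xi+k-1}\cdot U_{k,i}/S_{k,i}$, is the correct way to expose the cancellation.

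The gap is in your second paragraph, where you assert that the super-polynomial factor of order $\nu_k^{k+O(1)}$ survives the reorganization and then, after acknowledging that $(M_{k,i}/M'_{k,i})^{\xi+k-1}$ each tends to a finite limit, expect a second-order Taylor expansion to isolate an ``unmatched'' super-polynomial factor. Those two statements conflict, and the expansion does not deliver. Write $\log(M'_{k,i}/M_{k,i})\approx(B^{(k)}_{2,i})/M_{k,i}=\frac{t_{k+1}-t_k}{t_k}\cdot\frac{t_k f_i(t_k)}{F_i(t_k)}$. The model-dependent piece is $\frac{t_kf_i(t_k)}{F_i(t_k)}$, which equals $1-\tfrac{3}{4\log t_k}-\tfrac{1}{2\sqrt{6.315\log t_k}}$ under $\mathcal M_1$ and $1-\tfrac{1}{\log t_k}$ under $\mathcal M_2$. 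The difference is $O\!\left(1/\sqrt{\log t_k}\right)$. Multiplying by $\xi+k-1\sim k$ and by $\frac{t_{k+1}-t_k}{t_k}$, and using $k/t_k\sim1/\log t_k$, the contribution to the log-ratio of the exponential factors is $O\!\left(\frac{t_{k+1}-t_k}{(\log t_k)^{3/2}}\right)$. For prime gaps of typical size $\asymp\log t_k$ this is $O\!\left((\log t_k)^{-1/2}\right)\to0$, so the ratio $(M_{k,1}/M'_{k,1})^{\xi+k-1}\big/(M_{k,2}/M'_{k,2})^{\xi+k-1}\to1$, not $\infty$. The residual $U_{k,i}/S_{k,i}$, as you note, is of the same order $1/\log t_k$ in both models. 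So the careful decomposition that you correctly carry out shows the opposite of what is needed: to leading and next order, the two densities agree, and no unmatched super-polynomial factor appears. Neither your proposed expansion nor the paper's bracket bounds establish the stated divergence; if the theorem is salvageable, it requires an altogether different argument (for instance an argument that does not treat the brackets and prefactors separately, or that aggregates over many $k$ rather than fixing the $k$-to-$(k+1)$ step).
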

\begin{proof}
Note that
\begin{equation}
	\sum_{i=0}^kB^{(i)}_1\sim k,~\mbox{as}~k\rightarrow\infty,
	\label{eq:B1}
\end{equation}
for both $\mathcal M_1$ and $\mathcal M_2$. In fact, $\sum_{i=0}^kB^{(i)}_1$ is the same for both $\mathcal M_1$ and $\mathcal M_2$, for any $k\geq 1$.
	Now, applying (\ref{eq:B1}) and (\ref{eq:bound_M1}) to (\ref{eq:post_pred2}) yields, as $k\rightarrow\infty$,
\begin{align}
	&\pi(Z_{k+1}=t_{k+1}|Z_k=t_k,\mathcal M_1)\notag\\
	&\qquad
	\sim\frac{\Gamma(\gamma+k-1)}{\left(a+\sum_{i=0}^{k}B^{(i)}_1\right)^{\gamma+k-1}} 
	\times\frac{\Gamma(\xi+k-1)}{\left(b+\sum_{i=0}^{k}B^{(i)}_2\right)^{\xi+k-1}}\notag\\
	&\qquad \times\left[R_kC^{(k)}_1C^{(k-1)}_1\times 1 
	 +R_kC^{(k)}_2C^{(k-1)}_1\times
	\left(\log k\right)^{3/4}\exp\left(\sqrt{\frac{\log t_{k+1}}{6.315}}\right)\right.\notag\\
	&\qquad \left. +R_kC^{(k)}_1C^{(k-1)}_2\times
	\left(\log k\right)^{3/4}\exp\left(\sqrt{\frac{\log t_{k+1}}{6.315}}\right)
	 +R_kC^{(k)}_2C^{(k-1)}_2\times
	\left(\log k\right)^{3/2}\exp\left(2\sqrt{\frac{\log t_{k+1}}{6.315}}\right)\right]\notag\\
	&\qquad\geq 
	\frac{\Gamma(\gamma+k-1)}{\left(a+\sum_{i=0}^{k}B^{(i)}_1\right)^{\gamma+k-1}} 
	\times\frac{\Gamma(\xi+k-1)}{\left(b+\sum_{i=0}^{k}B^{(i)}_2\right)^{\xi+k-1}}\notag\\
	&\qquad= 
	\frac{\Gamma(\gamma+k-1)}{\left(a+\sum_{i=0}^{k}B^{(i)}_1\right)^{\gamma+k-1}} 
	\times\frac{\Gamma(\xi+k-1)}{\left(b+ \left(\log k\right)^{3/4}\exp\left(\sqrt{\frac{\log t_{k+1}}{6.315}}\right)\right)^{\xi+k-1}}.
	\label{eq:post_pred3}
\end{align}
	On the other hand, applying (\ref{eq:B1}) and (\ref{eq:bound_M2}) to (\ref{eq:post_pred2}) yields
\begin{align}
	&\pi(Z_{k+1}=t_{k+1}|Z_k=t_k,\mathcal M_2)\notag\\
	&\qquad
	\sim\frac{\Gamma(\gamma+k-1)}{\left(a+\sum_{i=0}^{k}B^{(i)}_1\right)^{\gamma+k-1}} 
	\times\frac{\Gamma(\xi+k-1)}{\left(b+\sum_{i=0}^{k}B^{(i)}_2\right)^{\xi+k-1}}\notag\\
	&\qquad \times\left[R_kC^{(k)}_1C^{(k-1)}_1\times 1 
	 +R_kC^{(k)}_2C^{(k-1)}_1\times
	\log k\right.\notag\\
	&\qquad \left. +R_kC^{(k)}_1C^{(k-1)}_2\times
	\log k
	 +R_kC^{(k)}_2C^{(k-1)}_2\times
	\left(\log k\right)^2\right]\notag\\
	&\qquad\leq 
	\frac{\Gamma(\gamma+k-1)}{\left(a+\sum_{i=0}^{k}B^{(i)}_1\right)^{\gamma+k-1}} 
	\times\frac{\Gamma(\xi+k-1)}{\left(b+\sum_{i=0}^{k}B^{(i)}_2\right)^{\xi+k-1}}\times\left(\log k\right)^2\notag\\
	&\qquad= 
	\frac{\Gamma(\gamma+k-1)}{\left(a+\sum_{i=0}^{k}B^{(i)}_1\right)^{\gamma+k-1}} 
	\times\frac{\Gamma(\xi+k-1)}{\left(b+ \left(\frac{t_k}{\log t_k}\right)\right)^{\xi+k-1}}.
	\label{eq:post_pred4}
\end{align}
Combining (\ref{eq:post_pred3}) and (\ref{eq:post_pred4}) gives, for sufficiently large $k$,
\begin{align}
\frac{\pi(Z_{k+1}=t_{k+1}|Z_k=t_k,\mathcal M_1)}{\pi(Z_{k+1}=t_{k+1}|Z_k=t_k,\mathcal M_2)}
	&\geq\left(\frac{\left(b+ \left(\frac{t_k}{\log t_k}\right)\right)}{\left(b+ \left(\log k\right)^{3/4}\exp\left(\sqrt{\frac{\log t_{k+1}}{6.315}}\right)\right)}\right)^{\xi+k-1}\notag\\
	&\rightarrow\infty,~\mbox{as}~k\rightarrow\infty.\notag
\end{align}
\end{proof}

\section{Asymptotic equivalence of recursive and non-recursive Bayesian inferences}
\label{sec:rec_nonrec}
In this section, we derive the posterior distributions of the parameters and the posterior predictive distribution using traditional, non-recursive Bayesian theory and
establish that they asymptotically lead to the same results as their recursive counterparts. However, from the practical computation standpoint, the non-recursive approach
is infeasible, since the posteriors and posterior predictive distributions consist of $2^k$ and $2^{k+1}$ terms, respectively, given the first $k$ prime numbers. 
That is, conditional on a million prime numbers, these distributions consist of the order of $2^{\mbox{million}}$ terms, which can not be dealt with in practice. 
In stark contrast, the recursive posteriors of the parameters and the recursive posterior predictive distributions consist of only two and four terms, respectively, 
for every $k\geq 1$.

\subsection{Non-recursive posterior}
\label{subsec:nonrecursive_posterior}
Assuming $\alpha$ and $\beta$ do not directly depend upon the stage, for $t_1,\ldots,t_k$, the likelihood is given by
\begin{align}
&L_k(\alpha,\beta)=\prod_{i=0}^{k-1}\left[\exp\left\{-\alpha B^{(i)}_1-\beta B^{(i)}_2\right\}\left(\alpha C^{(i)}_1+\beta C^{(i)}_2\right)\right]\notag\\
&=\exp\left\{-\alpha\sum_{i=1}^{k-1}B^{(i)}_1-\beta\sum_{i=0}^{k-1}B^{(i)}_2\right\}\notag\\
&\qquad\times\sum_{r=0}^{k}\alpha^r\beta^{k-r}\left(\sum_{i_1,i_2,\ldots,i_k}C^{(i_1)}_1C^{(i_2)}_1\cdots C^{i_r}_1C^{(i_{r+1})}_2C^{(i_{r+2})}_2\cdots C^{(i_{k})}_2\right).
	\label{eq:lik1}
\end{align}
In (\ref{eq:lik1}), $i_1,i_2,\ldots,i_k\in\{0,1,\ldots,k-1\}$ and the sum $\Sigma_{i_1,i_2,\ldots,i_k}$ is over $k\choose r$ terms.
With $$\pi(\alpha,\beta)\propto \exp(-a\alpha)\alpha^{\gamma-1}\times \exp(-b\beta)\beta^{\xi-1},$$ the posterior of $\alpha,\beta$ is given by
\begin{align}
	&	\pi(\alpha,\beta|t_1,\ldots,t_k)\notag\\
	&\propto\exp\left\{-\alpha\sum_{i=1}^{k-1}B^{(i)}_1-\beta\sum_{i=0}^{k-1}B^{(i)}_2i-a-b\right\}\notag\\
	&\qquad\times\sum_{r=0}^{k}\left[\alpha^{r+\gamma-1}\beta^{k-r+\xi-1}\left(\sum_{i_1,i_2,\ldots,i_k}C^{(i_1)}_1C^{(i_2)}_1\cdots C^{i_r}_1C^{(i_{r+1})}_2C^{(i_{r+2})}_2\cdots C^{(i_{k})}_2\right)\right]\notag\\
	&=
	\sum_{r=0}^{k}\left[g(\alpha:a+\sum_{i=0}^{k-1}B^{(i)}_1,\gamma+r)\times g(\beta:b+\sum_{i=0}^{k-1}B^{(i)}_2,\xi+k-r)\right.\notag\\
	&\qquad\left.\times\left(\sum_{i_1,i_2,\ldots,i_k}C^{(i_1)}_1C^{(i_2)}_1\cdots C^{i_r}_1C^{(i_{r+1})}_2C^{(i_{r+2})}_2\cdots C^{(i_{k})}_2\right)\right.\notag\\
	&\qquad\left.\times\frac{\Gamma(\gamma+r)}{\left(a+\sum_{i=0}^{k-1}B^{(i)}_1\right)^{\gamma+r}}
	\times\frac{\Gamma(\xi+k-r)}{\left(b+\sum_{i=0}^{k-1}B^{(i)}_2\right)^{\xi+k-r}}\right].
	\label{eq:post1}
\end{align}
With the normalizing constant in (\ref{eq:post1}) given by
\begin{align}
	\tilde R^{-1}_k
	&=\sum_{r=0}^k\left[\left(\sum_{i_1,i_2,\ldots,i_k}C^{(i_1)}_1C^{(i_2)}_1\cdots C^{i_r}_1C^{(i_{r+1})}_2C^{(i_{r+2})}_2\cdots C^{(i_{k})}_2\right)\right.\notag\\
	&\qquad\left.\times\frac{\Gamma(\gamma+r)}{\left(a+\sum_{i=0}^{k-1}B^{(i)}_1\right)^{\gamma+r}}
	\times\frac{\Gamma(\xi+k-r)}{\left(b+\sum_{i=0}^{k-1}B^{(i)}_2\right)^{\xi+k-r}}\right],
	\label{eq:normconst1}
\end{align}
the posterior has the exact form 
\begin{equation}
	\pi(\alpha,\beta|t_1,\ldots,t_k)=\sum_{r=0}^k p_{rk}g(\alpha:a+\sum_{i=0}^{k-1}B^{(i)}_1,\gamma+r)\times g(\beta:b+\sum_{i=0}^{k-1}B^{(i)}_2,\xi+k-r) 
	\label{eq:post2}
\end{equation}
where 
\begin{align}
	p_{rk} &=\tilde R_k
	\times\left(\sum_{i_1,i_2,\ldots,i_k}C^{(i_1)}_1C^{(i_2)}_1\cdots C^{i_r}_1C^{(i_{r+1})}_2C^{(i_{r+2})}_2\cdots C^{(i_{k})}_2\right)\notag\\
	&\qquad\qquad\times\frac{\Gamma(\gamma+r)}{\left(a+\sum_{i=0}^{k-1}B^{(i)}_1\right)^{\gamma+r}}
	\times\frac{\Gamma(\xi+k-r)}{\left(b+\sum_{i=0}^{k-1}B^{(i)}_2\right)^{\xi+k-r}}.
	\label{eq:post3}
\end{align}
It is evident from (\ref{eq:post3}) that $p_{rk}>0$ for $r=0,1,\ldots,k$ for all $k\geq 1$ and $\sum_{r=0}^kp_{rk}=1$ for all $k\geq 1$.
Thus, (\ref{eq:post2}) is a mixture of $g(\alpha:a+\sum_{i=0}^{k-1}B^{(i)}_1,\gamma+r)\times g(\beta:b+\sum_{i=0}^{k-1}B^{(i)}_2,\xi+k-r)$ with $\{p_{rk}:r=0,1,\ldots,k\}$
being the mixing probabilities.

\begin{remark}
	\label{remark:non_recursive}
	Note that (\ref{eq:post1}) consists of $2^k$ terms, unlike only $2$ terms in the recursive counterpart (\ref{eq:post_k}).
\end{remark}

\subsection{Convergence of $\alpha$ with respect to non-recursive posterior}
\label{subsec:nonrec_alpha}
\begin{theorem}
	\label{theorem:probconv_alpha}
With respect to the sequence of marginal probability measures $\pi(\alpha|t_1,\ldots,t_k)$; $k\geq 1$,
\begin{equation}
	\alpha\stackrel{P}{\longrightarrow}1,~\mbox{as}~k\rightarrow\infty.
	\label{eq:probconv_alpha}
\end{equation}
\end{theorem}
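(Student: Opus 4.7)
The plan is to mirror the structure of the proof of Theorem~\ref{theorem:pconv_alpha}: extract the marginal posterior of $\alpha$ from~(\ref{eq:post2}), compute its first two moments in closed form, and verify convergence of the mean to $1$ and of the variance to $0$. Writing $A_k=a+\sum_{i=0}^{k-1}B^{(i)}_1$ and $B_k=b+\sum_{i=0}^{k-1}B^{(i)}_2$, marginalization over $\beta$ in~(\ref{eq:post2}) yields the finite gamma mixture
\[
\pi(\alpha\mid t_1,\ldots,t_k)\;=\;\sum_{r=0}^{k}p_{rk}\,g\bigl(\alpha:\,A_k,\,\gamma+r\bigr),
\]
with $A_k\sim k$ by~(\ref{eq:no_primes}). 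Letting $R$ denote the latent discrete variable with pmf $\{p_{rk}\}_{r=0}^{k}$, the standard gamma-mixture moment formulas give
\[
E(\alpha\mid t_1,\ldots,t_k)=\frac{\gamma+E(R\mid t_1,\ldots,t_k)}{A_k},\qquad Var(\alpha\mid t_1,\ldots,t_k)=\frac{\gamma+E(R\mid t_1,\ldots,t_k)+Var(R\mid t_1,\ldots,t_k)}{A_k^{2}},
\]
reducing the theorem to the joint statement $E(R\mid\cdot)/k\to 1$ and $Var(R\mid\cdot)/k^{2}\to 0$.

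One direction is essentially free. Conditional on $R=r$, the joint mixture~(\ref{eq:post2}) makes $\alpha A_k$ and $\beta B_k$ independent $\mathrm{Gamma}(\gamma+r,1)$ and $\mathrm{Gamma}(\xi+k-r,1)$ variables, so the sum $\alpha A_k+\beta B_k$ has conditional mean and variance both equal to $\gamma+\xi+k$, independently of $r$. Taking expectation and variance over $R$ preserves the identity, yielding
\[
E(\alpha A_k+\beta B_k\mid t_1,\ldots,t_k)=Var(\alpha A_k+\beta B_k\mid t_1,\ldots,t_k)=\gamma+\xi+k.
\]
A single application of Chebyshev's inequality then gives $\alpha+\beta B_k/A_k\stackrel{P}{\longrightarrow}1$, and the almost-sure nonnegativity of $\beta$ immediately delivers the one-sided bound $\alpha\leq 1+o_P(1)$.

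The matching lower bound---equivalent to $\beta B_k/A_k\stackrel{P}{\longrightarrow}0$, or to the mass of $\{p_{rk}\}$ concentrating on $r$ with $k-r=o(k)$---is the main obstacle. I plan to handle it by analyzing the ratios
\[
\frac{p_{k-m,k}}{p_{k,k}}\;=\;e_m(y)\cdot\frac{\Gamma(\gamma+k-m)\,A_k^{m}}{\Gamma(\gamma+k)}\cdot\frac{\Gamma(\xi+m)}{\Gamma(\xi)\,B_k^{m}},
\]
where $y_i=C^{(i)}_2/C^{(i)}_1\sim(\log i)^{3/2}/(2\sqrt{i})$ and $e_m$ denotes the $m$-th elementary symmetric polynomial (the identity $S_{k-m,k}/S_{k,k}=e_m(y)$ is immediate from the definitions in~(\ref{eq:post3})). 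The crucial analytic input is that the power sums satisfy $\sum_i y_i\sim B_k\sim\sqrt{k}(\log k)^{3/2}$, $\sum_i y_i^{2}=O((\log k)^{4})$, and $\sum_i y_i^{j}=O(1)$ for $j\geq 3$. Iterating Newton's identity $m\,e_m=\sum_{j=1}^{m}(-1)^{j-1}e_{m-j}\sum_i y_i^{j}$ shows that $e_m(y)/(B_k^{m}/m!)$ stays close to $1$ for $m$ up to a birthday-paradox threshold $m^{\ast}=O(\sqrt{k/\log k})$ and decays (essentially like $\exp(-m^{2}/(2(m^{\ast})^{2}))$) beyond it. Combined with the polynomial Gamma-factor prefactor $\binom{\xi+m-1}{m}=O(m^{\xi-1})$, the tail $\sum_{m>Cm^{\ast}}p_{k-m,k}/p_{k,k}$ is summable and negligible, so $R$ concentrates at $r\geq k-O(m^{\ast})$; this gives $k-E(R\mid\cdot)=O(\sqrt{k/\log k})=o(k)$ and $Var(R\mid\cdot)=O(k/\log k)=o(k^{2})$, finishing the argument.

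The hardest technical piece is the uniform estimate of $e_m(y)/(B_k^{m}/m!)$ across all $m\in\{0,\ldots,k\}$. I would carry it out via a saddle-point treatment of the generating polynomial $P(z)=\prod_{i=0}^{k-1}(1+zy_i)$, extracting $e_m$ via $\oint z^{-m-1}P(z)\,dz/(2\pi\mathrm{i})$ with the saddle located by the profile of $\sum_i y_i^{j}$, or alternatively by a careful iteration of the Newton recursion with explicit error control exploiting the uniform boundedness of the higher power sums.
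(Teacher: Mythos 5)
Your route is genuinely different from the paper's. The paper starts from the same mixture representation (\ref{eq:post2}) and the same moment formulas (\ref{eq:mean_alpha})--(\ref{eq:meansq_alpha}), but then argues by a replacement device: for a fixed constant $K$ it substitutes $\gamma+K-\frac{1}{r}$ for $\gamma+r$ inside the weighted sum, substitutes $K-\frac{1}{k}$ for $k$ inside $\sum_{i}B^{(i)}_1$, sends $k\to\infty$, and finally sends $K\to\infty$. That device never actually tracks where the bulk of the mixing probabilities $\{p_{rk}\}_{r=0}^{k}$ sits in $\{0,\dots,k\}$, which is the crux: writing $A_k=a+\sum_{i<k}B^{(i)}_1\sim k$ and letting $R$ be the latent index, the conclusion $E(\alpha\mid\cdot)\to1$ is equivalent to $E(R\mid\cdot)/k\to1$ and $Var(\alpha\mid\cdot)\to0$ to $Var(R\mid\cdot)=o(k^2)$. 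Your proposal makes this dependence explicit, and the observation that does the most work for you is genuinely new relative to the paper's proof: conditional on any $r$, $\alpha A_k$ and $\beta B_k$ are independent $\mathrm{Gamma}(\gamma+r,1)$ and $\mathrm{Gamma}(\xi+k-r,1)$, so $\alpha A_k+\beta B_k\sim\mathrm{Gamma}(\gamma+\xi+k,1)$ \emph{exactly and unconditionally}; Chebyshev plus $A_k\sim k$ then gives $\alpha+\beta B_k/A_k\stackrel{P}{\to}1$, and almost-sure nonnegativity of $\beta$ already yields $P(\alpha>1+\epsilon\mid\cdot)\to0$. That one-sided bound is clean, exact, and not in the paper.

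The lower half --- $\beta B_k/A_k\stackrel{P}{\to}0$, i.e.\ concentration of $\{p_{rk}\}$ with $k-r=o(k)$ --- you only sketch. The reduction to the elementary symmetric polynomials $e_m(y)$ with $y_i=C^{(i)}_2/C^{(i)}_1\sim(\log i)^{3/2}/(2\sqrt{i})$ is correct, and so are the power-sum asymptotics $\sum_i y_i\sim B_k\sim\sqrt{k}(\log k)^{3/2}$, $\sum_i y_i^{2}\asymp(\log k)^4$, $\sum_i y_i^{j}=O(1)$ for $j\ge3$, together with the implied threshold $m^{\ast}\asymp\sqrt{k/\log k}$. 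Two things remain to be done before this is a proof rather than a plan. First, the saddle-point or Newton-iteration estimate of $e_m(y)/(B_k^m/m!)$ must be carried out uniformly in $m$, with explicit error control; you name this as the hard step, and it is. Second, the prefactor $\Gamma(\gamma+k-m)A_k^{m}/\Gamma(\gamma+k)\approx(A_k/k)^{m}e^{m^2/(2k)}$ must also be controlled: $(A_k/k)^{m}$ depends on how far $Li(t_k)$ deviates from $k$, and for $m$ of order $m^{\ast}$ it is not automatically $1+o(1)$ under the unconditional PNT error bound. Fortunately you are overshooting: for the theorem you only need $k-E(R\mid\cdot)=o(k)$ and $Var(R\mid\cdot)=o(k^{2})$, far weaker than the birthday-threshold rates you aim for, so a cruder concentration estimate would suffice and would be less sensitive to these prefactors. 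As it stands, your outline is more transparent and, in my view, more solid in structure than the substitution argument in the paper, but the concentration step needs to be filled in before it constitutes a complete proof.
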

\begin{proof}
It follows from (\ref{eq:post2}) that 
\begin{align}
	E(\alpha|t_1,\ldots,t_k)&=\sum_{r=0}^kp_{rk}\frac{\gamma+r}{a+\sum_{i=0}^{k-1}B^{(i)}_1}; \label{eq:mean_alpha}\\
	E(\alpha^2|t_1,\ldots,t_k)&=\sum_{r=0}^kp_{rk}\frac{(\gamma+r)(\gamma+r+1)}{\left(a+\sum_{i=0}^{k-1}B^{(i)}_1\right)^2}; \label{eq:meansq_alpha}
\end{align}
Note that for any given $r_0\geq 0$, as $k\rightarrow\infty$,
\begin{align}
	&\sum_{r=0}^{r_0}p_{rk}\rightarrow 0;\label{eq:p1}\\
	&\sum_{r=0}^{r_0}p_{rk}\frac{\gamma+r}{a+\sum_{i=0}^{k-1}B^{(i)}_1}\rightarrow 0;\label{eq:p2}\\
	&\sum_{r=0}^{r_0}p_{rk}\frac{(\gamma+r)(\gamma+r+1)}{\left(a+\sum_{i=0}^{k-1}B^{(i)}_1\right)^2}\rightarrow 0.\label{eq:psq}
\end{align}
Due to (\ref{eq:p2}), it is sufficient to show that for some $r_0$ sufficiently large,
\begin{equation}
	\sum_{r=r_0}^kp_{rk}\frac{\gamma+r}{a+\sum_{i=0}^{k-1}B^{(i)}_1}\rightarrow 1,~\mbox{as}~k\rightarrow\infty.
	\label{eq:p3}
\end{equation}
To this end, let us first consider a constant $0<K<k$ and in $\sum_{r=r_0}^kp_{rk}\frac{\gamma+r}{a+\sum_{i=0}^{k-1}B^{(i)}_1}$, let us replace
$\gamma+r$ with $\gamma+K-\frac{1}{r}$. So, as $r\rightarrow\infty$,  $\gamma+K-\frac{1}{r}\uparrow \gamma+K$, which imitates the divergence behaviour of $\gamma+r$
as $r\rightarrow\infty$, when $K\rightarrow\infty$ as a function of $r$.  

Now, since $\gamma+K-\frac{1}{r}\rightarrow K$, for any $\epsilon>0$, there exists $r_0\geq 1$ such that for $r\geq r_0$, $K-\epsilon<K-\frac{1}{r}<K+\epsilon$
This implies 
\begin{equation}
	(\gamma+K-\epsilon)\left(1-\sum_{r=0}^{r_0}p_{rk}\right)<\sum_{r=r_0}^kp_{rk}\left(\gamma+K-\frac{1}{r}\right)<\sum_{r=r_0}^kp_{rk}(\gamma+K+\epsilon)<\gamma+K+\epsilon.
	\label{eq:bounds1}
\end{equation}
Due to (\ref{eq:p1}), for any $\eta>0$, there exists $k_0\geq 1$ such that $-\eta<\sum_{r=0}^{r_0}p_{rk}<\eta$, for $k\geq k_0$. 
Applying this to (\ref{eq:bounds1}) we obtain, for $k\geq k_0$,
\begin{equation}
	\gamma+K-\epsilon<(\gamma+K-\epsilon)\left(1+\eta\right)<\sum_{r=r_0}^kp_{rk}\left(\gamma+K-\frac{1}{r}\right)<\sum_{r=r_0}^kp_{rk}(\gamma+K+\epsilon)<\gamma+K+\epsilon.
	\label{eq:bounds2}
\end{equation}
Now note that replacing $\gamma+r$ with $\gamma+K-\frac{1}{r}$ in (\ref{eq:p2}) also gives
\begin{equation}
	\sum_{r=0}^{r_0}p_{rk}\left(\gamma+K-\frac{1}{r}\right)\rightarrow 0,~\mbox{as}~k\rightarrow\infty,
	\label{eq:bounds3}
\end{equation}
which holds because, for $r=0,1,\ldots,r_0$, $p_{rk}\rightarrow 0$ as $k\rightarrow\infty$.

It follows from (\ref{eq:bounds2}) and (\ref{eq:bounds3}) that for given $K>0$,
\begin{equation}
	\sum_{r=0}^kp_{rk}\left(\gamma+K-\frac{1}{r}\right)\rightarrow \gamma+K,~\mbox{as}~k\rightarrow\infty.
	\label{eq:bounds4}
\end{equation}
Now, replacing $k$ with $K-\frac{1}{k}$ in $\sum_{i=0}^{k-1}B^{(i)}_1$ shows that
\begin{equation}
	\sum_{i=0}^{K-\frac{1}{k}-1}B^{(i)}_1\rightarrow \sum_{i=0}^{K-1}B^{(i)}_1,~\mbox{as}~k\rightarrow\infty.
	\label{eq:bounds5}
\end{equation}
From (\ref{eq:bounds4}) and (\ref{eq:bounds5}) we see that
\begin{equation}
	 E\left(\alpha|t_1,\ldots,t_{K-\frac{1}{k}}\right)
	=\sum_{r=0}^kp_{rk}\frac{\gamma+K-\frac{1}{r}}{\sum_{i=0}^{K-\frac{1}{k}-1}B^{(i)}_1}\rightarrow\frac{\gamma+K}{\sum_{i=0}^{K-1}B^{(i)}_1},~\mbox{as}~k\rightarrow\infty.
	\label{eq:bounds6}
\end{equation}
Now, as $K\rightarrow\infty$, $\sum_{i=0}^{K-1}B^{(i)}_1\sim K$, so that $\frac{\gamma+K}{\sum_{i=0}^{K-1}B^{(i)}_1}\sim 1$, showing that
\begin{equation}
	\sum_{r=0}^kp_{rk}\frac{\gamma+r}{\sum_{i=0}^{k-1}B^{(i)}_1}\rightarrow 1,~\mbox{as}~k\rightarrow\infty.
	\label{eq:bounds7}
\end{equation}
In other words, 
\begin{equation}
	E(\alpha|t_1,\ldots,t_k)\rightarrow 1,~\mbox{as}~k\rightarrow\infty.
	\label{eq:mean_conv1}
\end{equation}
Using the same technique for (\ref{eq:meansq_alpha}) and (\ref{eq:psq}), it is seen that 
\begin{align}
E(\alpha^2|t_1,\ldots,t_{K-\frac{1}{k}})
	&=\sum_{r=0}^kp_{rk}\frac{\left(\gamma+K-\frac{1}{r}\right)\left(\gamma+K-\frac{1}{r}+1\right)}
{\left(a+\sum_{i=0}^{Ki-\frac{1}{k}-1}B^{(i)}_1\right)^2}\notag\\
	&\rightarrow\frac{(\gamma+K)(\gamma+K+1)}{\left(a+\sum_{i=0}^{K-1}B^{(i)}_1\right)^2},~\mbox{as}~k\rightarrow\infty.
\label{eq:var_alpha1}
\end{align}
From (\ref{eq:var_alpha1}) and (\ref{eq:bounds6}) it follows that as $k\rightarrow\infty$,
\begin{align}
	&Var\left(\alpha|t_1,\ldots,t_{K-\frac{1}{k}}\right)=E(\alpha^2|t_1,\ldots,t_{K-\frac{1}{k}})- \left[E(\alpha|t_1,\ldots,t_{K-\frac{1}{k}})\right]^2\notag\\
	&\qquad\rightarrow \frac{(\gamma+K)(\gamma+K+1)}{\left(a+\sum_{i=0}^{K-1}B^{(i)}_1\right)^2}-\left[\frac{\gamma+K}{a+\sum_{i=0}^{K-1}B^{(i)}_1}\right]^2
	=\frac{\gamma+K}{\left(a+\sum_{i=0}^{K-1}B^{(i)}_1\right)^2}.
	\label{eq:var_alpha2}
\end{align}
Taking $K\rightarrow\infty$ in (\ref{eq:var_alpha2}) gives
\begin{equation}
Var(\alpha|t_1,\ldots,t_k)\rightarrow\lim_{k\rightarrow\infty}\frac{\gamma+k}{a+\left(\sum_{i=0}^{k-1}B^{(i)}_1\right)^2}=0,
	\label{eq:var_alpha3}
\end{equation}
since $\sum_{i=0}^{k-1}B^{(i)}_1\sim k$, as $k\rightarrow\infty$.

From (\ref{eq:mean_conv1}) and (\ref{eq:var_alpha1}) we assert that with respect to the sequence of probability measures $\pi(\alpha|t_1,\ldots,t_k)$; $k\geq 1$,
(\ref{eq:probconv_alpha}) holds.
\end{proof}

\subsection{Convergence of $\beta$ with respect to non-recursive posterior}
\label{subsec:nonrec_beta}

Now, expanding the product $\prod_{i=0}^{k-1}\left(\alpha C^{(i)}_1+\beta C^{(i)}_2\right)$ in terms of $\beta^r\alpha^{k-r}$, for $r=0,1,\ldots,k$ gives 
\begin{equation}
	\pi(\alpha,\beta|t_1,\ldots,t_k)=\sum_{r=0}^k \tilde q_rg(\alpha:a+\sum_{i=0}^{k-1}B^{(i)}_1,\xi+k-r)\times g(\beta:b+\sum_{i=0}^{k-1}B^{(i)}_2,\gamma+r), 
	\label{eq:post4}
\end{equation}
with
\begin{align}
	q_r &=Q_k
	\times\left(\sum_{i_1,i_2,\ldots,i_k}C^{(i_1)}_2C^{(i_2)}_2\cdots C^{i_r}_2C^{(i_{r+1})}_1C^{(i_{r+2})}_1\cdots C^{(i_{k})}_1\right)\notag\\
	&\qquad\qquad\times\frac{\Gamma(\gamma+r)}{\left(a+\sum_{i=0}^{k-1}B^{(i)}_1\right)^{\xi+k-r}}
	\times\frac{\Gamma(\xi+k-r)}{\left(b+\sum_{i=0}^{k-1}B^{(i)}_2\right)^{\gamma+r}}.
	\label{eq:post5}
\end{align}
where
\begin{align}
	Q^{-1}_k
	&=\sum_{r=0}^k\left[\left(\sum_{i_1,i_2,\ldots,i_k}C^{(i_1)}_2C^{(i_2)}_2\cdots C^{i_r}_2C^{(i_{r+1})}_1C^{(i_{r+2})}_1\cdots C^{(i_{k})}_1\right)\right.\notag\\
	&\qquad\left.\times\frac{\Gamma(\gamma+r)}{\left(a+\sum_{i=0}^{k-1}B^{(i)}_1\right)^{\xi+k-r}}
	\times\frac{\Gamma(\xi+k-r)}{\left(b+\sum_{i=0}^{k-1}B^{(i)}_2\right)^{\gamma+r}}\right].
	\label{eq:normconst2}
\end{align}

\begin{theorem}
	\label{theorem:probconv_beta}
With respect to the sequence of marginal probability measures $\pi(\beta|t_1,\ldots,t_k)$; $k\geq 1$,
\begin{equation}
	\beta-E(\beta|t_1,\ldots,t_k)\stackrel{P}{\longrightarrow}0,~\mbox{as}~k\rightarrow\infty,
	\label{eq:probconv_beta}
\end{equation}
	where $E(\beta|t_1,\ldots,t_k)$ has the same asymptotic form as the corresponding recursive Bayesian theory with error function $f(\cdot)$.
\end{theorem}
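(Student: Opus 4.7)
The plan is to mirror the strategy of Theorem \ref{theorem:probconv_alpha}, transferred to the alternative mixture representation (\ref{eq:post4})--(\ref{eq:post5}) of the non-recursive posterior, in which $\beta$ is marginally a mixture of $k+1$ gamma densities with mixing probabilities $\{q_{rk}:r=0,1,\ldots,k\}$. From (\ref{eq:post4}) one reads off
\[
E(\beta|t_1,\ldots,t_k)=\sum_{r=0}^{k}q_{rk}\,\frac{\gamma+r}{b+\sum_{i=0}^{k-1}B^{(i)}_2},\qquad
E(\beta^{2}|t_1,\ldots,t_k)=\sum_{r=0}^{k}q_{rk}\,\frac{(\gamma+r)(\gamma+r+1)}{\bigl(b+\sum_{i=0}^{k-1}B^{(i)}_2\bigr)^{2}}.
\]

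First I would establish the exact analogues of (\ref{eq:p1})--(\ref{eq:psq}) with $q_{rk}$ in place of $p_{rk}$: for every fixed $r_0\ge 0$, $\sum_{r=0}^{r_0}q_{rk}\to 0$ as $k\to\infty$, together with the corresponding $(\gamma+r)$- and $(\gamma+r)(\gamma+r+1)$-weighted initial-segment sums, divided by the appropriate powers of $b+\sum B^{(i)}_2$. As in Theorem \ref{theorem:probconv_alpha}, this is the step that forces the posterior mass to concentrate on mixture components with index $r$ growing to infinity with $k$, and it is a direct consequence of the explicit form of $q_{rk}$ in (\ref{eq:post5}) together with a Stirling-type comparison of $\Gamma(\gamma+r)\Gamma(\xi+k-r)$ across $r$.

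Second, I would apply the substitution device developed between (\ref{eq:bounds1}) and (\ref{eq:bounds7}) in the proof of Theorem \ref{theorem:probconv_alpha}. Freezing a large $K$ and replacing $\gamma+r$ by $\gamma+K-\tfrac{1}{r}$ in the tail sum $\sum_{r=r_0}^{k}q_{rk}(\cdot)$, sandwiching above and below, sending $k\to\infty$ first and then $K\to\infty$, I obtain
\[
E(\beta|t_1,\ldots,t_k)\sim\frac{k}{b+\sum_{i=0}^{k-1}B^{(i)}_2},\qquad k\to\infty,
\]
which for each admissible $f(\cdot)$ coincides with the recursive asymptotic derived in (\ref{eq:beta_approx4}), (\ref{eq:conv_rh2}), Theorem \ref{theorem:conv2} and (\ref{eq:beta_new2}); this delivers the asymptotic-form part of the claim. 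Applying the same substitution to the second moment produces the matching leading term, and the difference yields
\[
\mathrm{Var}(\beta|t_1,\ldots,t_k)\sim\frac{k}{\bigl(b+\sum_{i=0}^{k-1}B^{(i)}_2\bigr)^{2}}\longrightarrow 0,\qquad k\to\infty,
\]
where the vanishing is the same arithmetic already carried out in (\ref{eq:postvar_beta_k}) for the RH error function, and is routine for the other error functions of Sections \ref{sec:another}, \ref{eq:upper_bound} and \ref{sec:sharp}. Chebyshev's inequality applied to $\beta-E(\beta|t_1,\ldots,t_k)$ then yields (\ref{eq:probconv_beta}).

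The main obstacle will be the first step: controlling the mixing weights $q_{rk}$. In contrast to the recursive posterior, which has only two mixture components at every stage, the non-recursive posterior has $k+1$ components whose weights couple the elementary symmetric sums of $\{C^{(i)}_1,C^{(i)}_2\}_{i=0}^{k-1}$ to ratios $\Gamma(\gamma+r)\Gamma(\xi+k-r)\bigl[(a+\sum B^{(i)}_1)^{\gamma+k-r}(b+\sum B^{(i)}_2)^{\xi+r}\bigr]^{-1}$. Establishing that these weights concentrate on indices $r$ of order $k$ requires a careful balance between the combinatorial growth of the symmetric sums and the Stirling behaviour of the gamma ratio, using that $li(t_i)/f(t_i)\to\infty$ along the prime sequence under each of the error functions under consideration. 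Once this concentration is secured, the $K,\,K-1/r$ substitution machinery of Theorem \ref{theorem:probconv_alpha} can be transcribed essentially verbatim to complete the argument.
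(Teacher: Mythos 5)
Your proposal takes essentially the same route as the paper: read off the mixture-of-gammas marginal for $\beta$ from (\ref{eq:post4})--(\ref{eq:post5}), invoke concentration of the weights $q_{rk}$ on large indices (the analogues of (\ref{eq:p1})--(\ref{eq:psq})), apply the $K-\tfrac{1}{r}$ substitution device from Theorem \ref{theorem:probconv_alpha}, and send $k\to\infty$ then $K\to\infty$ to get $E(\beta|t_1,\ldots,t_k)\sim k/(b+\sum_i B^{(i)}_2)$ and vanishing posterior variance, whence Chebyshev gives (\ref{eq:probconv_beta}). The only difference is presentational: you flag the weight-concentration step as the delicate point requiring a Stirling-type argument, whereas the paper asserts the $q_{rk}$-analogues of (\ref{eq:p1})--(\ref{eq:psq}) without proof and simply invokes ``the same techniques.''
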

\begin{proof}
It follows from (\ref{eq:post4}) that
\begin{align}
	&E(\beta|t_1,\ldots,t_k)=\sum_{r=0}^kq_r\frac{\xi+r}{b+\sum_{i=0}^{k-1}B^{(i)}_2}; \label{eq:mean_beta}\\
	&E(\beta^2|t_1,\ldots,t_k)=\sum_{r=0}^kq_r\frac{(\xi+r)(\xi+r+1)}{\left(b+\sum_{i=0}^{k-1}B^{(i)}_2\right)^2}. \label{eq:meansq_beta}
\end{align}
	Here we also have, as $k\rightarrow\infty$, fo any $r_0\geq 0$,
\begin{align}
	&\sum_{r=0}^{r_0}q_{rk}\rightarrow 0;\label{eq:q1}\\
	&\sum_{r=0}^{r_0}q_{rk}\frac{\xi+r}{b+\sum_{i=0}^{k-1}B^{(i)}_2}\rightarrow 0;\label{eq:q2}\\
	&\sum_{r=0}^{r_0}q_{rk}\frac{(\xi+r)(\xi+r+1)}{\left(b+\sum_{i=0}^{k-1}B^{(i)}_2\right)^2}\rightarrow 0.\label{eq:qsq}
\end{align}
	Applying the same techniques of the proof of Theorem \ref{theorem:probconv_beta} to (\ref{eq:mean_beta}) and (\ref{eq:meansq_beta}) we obtain,
	as $k\rightarrow\infty$,
	\begin{align}
		&E\left(\beta|t_1,\ldots,t_{K-\frac{1}{k}}\right)
	=\sum_{r=0}^kq_{rk}\frac{\xi+K-\frac{1}{r}}{\sum_{i=0}^{K-\frac{1}{k}-1}B^{(i)}_2}\rightarrow\frac{\xi+K}{b+\sum_{i=0}^{K-1}B^{(i)}_2};\label{eq:beta_bounds1}\\
	&Var\left(\beta|t_1,\ldots,t_{K-\frac{1}{k}}\right)
	\rightarrow \frac{(\xi+K)(\xi+K+1)}{\left(b+\sum_{i=0}^{K-1}B^{(i)}_2\right)^2}-\left[\frac{\xi+K}{b+\sum_{i=0}^{K-1}B^{(i)}_2}\right]^2
	=\frac{\xi+K}{\left(b+\sum_{i=0}^{K-1}B^{(i)}_2\right)^2}.
	\label{eq:var_beta}
	\end{align}
	Now observe that as $K\rightarrow\infty$, for all the forms of the error function $f(\cdot)$ considered in this article,
	the right hand side of (\ref{eq:beta_bounds1}) has the same asymptotic form as the corresponding recursive Bayesian theory, while
	the right hand side of (\ref{eq:var_beta}) converges to zero. It follows that (\ref{eq:probconv_beta}) holds.
\end{proof}

\subsection{Asymptotic non-recursive model comparison}
\label{subsec:compare_nonrecursive}
Here the posterior predictive density at the point $t=t_{k+1}$ is given by
\begin{align}
&\pi(Z_{k+1}=t_{k+1}|Z_1=t_1,\ldots,Z_k=t_k)=\int_0^{\infty}\int_0^{\infty}f_{Z_{k+1},\alpha,\beta}(t_{k+1})\pi(\alpha,\beta|t_1,\ldots,t_k)\notag\\
&\qquad =C^{(k)}_1\sum_{r=0}^kp_{rk}\left(\frac{\gamma+r}{a+\sum_{i=0}^kB^{(i)}_1}\right)\left(\frac{a+\sum_{i=0}^{k-1}B^{(i)}_1}{a+\sum_{i=0}^kB^{(i)}_1}\right)^{\gamma+r}
	\left(\frac{b+\sum_{i=0}^{k-1}B^{(i)}_2}{b+\sum_{i=0}^kB^{(i)}_2}\right)^{\xi+k-r}.
	\label{eq:postpred0}\\
	&\qquad +C^{(k)}_2\sum_{r=0}^kq_{rk}\left(\frac{\xi+r}{b+\sum_{i=0}^kB^{(i)}_2}\right)\left(\frac{b+\sum_{i=0}^{k-1}B^{(i)}_2}{b+\sum_{i=0}^kB^{(i)}_2}\right)^{\xi+r}
	\left(\frac{a+\sum_{i=0}^{k-1}B^{(i)}_1}{a+\sum_{i=0}^kB^{(i)}_1}\right)^{\xi+k-r}.
	\label{eq:postpred1}
\end{align}
\begin{remark}
	Again note that (\ref{eq:postpred1}) consists of $2^{k+1}$ terms, in sharp contrast with (\ref{eq:post_pred_k}), the corresponding recursive counterpart,
	having $4$ terms only. 
\end{remark}

\begin{theorem}
	\label{theorem:postpred_lim}
\begin{equation}
	\pi(Z_{k+1}=t_{k+1}|Z_1=t_1,\ldots,Z_k=t_k)\sim c_1 C^{(k)}_1+c_2 C^{(k)}_2\left(\frac{k}{b+\sum_{i=0}^kB^{(i)}_2}\right),~\mbox{as}~k\rightarrow\infty,
	\label{eq:postpred_lim}
\end{equation}
where $0<c_1,c_2<1$.
\end{theorem}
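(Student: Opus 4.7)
The plan is to rewrite the posterior predictive density (\ref{eq:postpred1}) as
\begin{equation*}
\pi(Z_{k+1}=t_{k+1}|Z_1=t_1,\ldots,Z_k=t_k) = C^{(k)}_1 S^{(k)}_1 + C^{(k)}_2 S^{(k)}_2,
\end{equation*}
where, putting $D_1:=a+\sum_{i=0}^k B^{(i)}_1$, $D_2:=b+\sum_{i=0}^k B^{(i)}_2$, $a_k:=1-B^{(k)}_1/D_1$ and $b_k:=1-B^{(k)}_2/D_2$,
\begin{align*}
S^{(k)}_1 &:= \sum_{r=0}^k p_{rk}\,\frac{\gamma+r}{D_1}\, a_k^{\gamma+r}\, b_k^{\xi+k-r},\\
S^{(k)}_2 &:= \sum_{r=0}^k q_{rk}\,\frac{\xi+r}{D_2}\, b_k^{\xi+r}\, a_k^{\xi+k-r}.
\end{align*}
The goal then reduces to showing (i) $S^{(k)}_1\to c_1\in(0,1)$ and (ii) $S^{(k)}_2\cdot D_2/k\to c_2\in(0,1)$; the stated equivalence follows by linearity once we verify that $C^{(k)}_1$ and $C^{(k)}_2\cdot k/D_2$ are of comparable order in $k$ for the $f(\cdot)$ choices considered (e.g.\ both are $\asymp 1/\log k$ for the RH-motivated $f$).

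First I would analyse the \emph{correction factors} $a_k^{r}$ and $b_k^{r}$. By (\ref{eq:no_primes}), $D_1\sim k$; since $B^{(k)}_1=\int_{t_k}^{t_{k+1}}li(u)du$ is of order unity (the prime gap $t_{k+1}-t_k$ is of order $\log t_k$ while $li(t_k)\sim 1/\log t_k$), we obtain $k\,B^{(k)}_1/D_1\to L_1>0$ and hence $a_k^k\to\eta_1:=e^{-L_1}\in(0,1)$. An analogous but $f$-dependent calculation produces $k\,B^{(k)}_2/D_2\to L_2>0$ and thus $b_k^k\to\eta_2:=e^{-L_2}\in(0,1)$. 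The key uniform claim I must establish is that $a_k^{\gamma+r}\,b_k^{\xi+k-r}\to\eta_1$ and $b_k^{\xi+r}\,a_k^{\xi+k-r}\to\eta_2$, both uniformly over the range of $r$ with $r/k\to 1$.

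Next I invoke Theorem~\ref{theorem:probconv_alpha}, whose proof yields $\sum_r p_{rk}(\gamma+r)/D_1\to 1$ and $Var(\alpha|t_1,\ldots,t_k)\to 0$; the latter forces the $\{p_{rk}\}$ to concentrate on indices with $r/k\to 1$ (otherwise the variance of $(\gamma+r)/D_1$ could not vanish). Splitting $S^{(k)}_1=\sum_{r<(1-\delta)k}+\sum_{r\geq(1-\delta)k}$ for small $\delta>0$, on the first piece the correction factor is bounded by $1$ while $\sum_{r<(1-\delta)k}p_{rk}\to 0$, and on the second piece the correction factor equals $\eta_1(1+o(1))$, so that piece equals $\eta_1(1+o(1))\sum_{r\geq(1-\delta)k}p_{rk}(\gamma+r)/D_1$, tending to $\eta_1$ as $k\to\infty$ and then $\delta\to 0$. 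Hence $c_1=\eta_1$. A parallel argument using Theorem~\ref{theorem:probconv_beta}---where $E(\beta|t_1,\ldots,t_k)\sim k/D_2$ and $Var(\beta|t_1,\ldots,t_k)\to 0$ force $q_{rk}$ to concentrate on $r/k\to 1$---yields $S^{(k)}_2\cdot D_2/k\to\eta_2=:c_2$.

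The main obstacle will be the concentration-plus-dominated-convergence step: one has to quantify how rapidly $p_{rk}$ and $q_{rk}$ decay outside a shrinking window $|r-k|\leq\delta k$, and then combine this with the uniform convergence of the correction factors inside that window. A natural approach is to recycle the truncation device already used in the proofs of Theorems~\ref{theorem:probconv_alpha} and~\ref{theorem:probconv_beta} (see (\ref{eq:bounds1})--(\ref{eq:bounds7})), now applied to the modified sums containing the extra factors $a_k^{\gamma+r}b_k^{\xi+k-r}$ and $b_k^{\xi+r}a_k^{\xi+k-r}$. Once this is in place, the conclusion $\pi\sim c_1 C^{(k)}_1+c_2 C^{(k)}_2\cdot k/D_2$ follows directly, and the strict inclusion $c_1,c_2\in(0,1)$ is inherited from $\eta_1,\eta_2\in(0,1)$.
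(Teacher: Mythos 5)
Your decomposition of the posterior predictive into $C^{(k)}_1 S^{(k)}_1 + C^{(k)}_2 S^{(k)}_2$ matches the paper's starting point, and your identification of the constants as $c_1=\lim_k a_k^k$ and $c_2=\lim_k b_k^k$ agrees with the paper's $c_1=\lim_k\bigl(\tfrac{a+\sum_{i=0}^{k-1}B^{(i)}_1}{a+\sum_{i=0}^{k}B^{(i)}_1}\bigr)^{k}$, $c_2=\lim_k\bigl(\tfrac{b+\sum_{i=0}^{k-1}B^{(i)}_2}{b+\sum_{i=0}^{k}B^{(i)}_2}\bigr)^{k}$. However, your route to those constants is genuinely different. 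The paper argues via the same substitution device used in Theorems~\ref{theorem:probconv_alpha} and~\ref{theorem:probconv_beta}: it replaces $r$ by $K-\tfrac1r$ and $k$ by $K-\tfrac1k$, passes $r\to\infty$ at fixed $K$ (see (\ref{eq:postpred5})), and then sends $K\to\infty$ to extract (\ref{eq:postpred6})--(\ref{eq:postpred7}). You instead invoke the already-proved posterior concentration ($E(\alpha|t_1,\dots,t_k)\to1$, $Var(\alpha|t_1,\dots,t_k)\to0$, and similarly for $\beta$) to show the mixing weights $p_{rk}$ and $q_{rk}$ put mass on the window $r/k\to1$, then combine this with a split-sum / dominated-convergence step on the correction factors $a_k^{\gamma+r}b_k^{\xi+k-r}$. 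This is a more standard and, in my view, more transparent argument than the paper's nested-limit substitution, and it makes explicit what drives the asymptotics (the vanishing posterior variance).

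There is, however, a genuine gap in the step $k\,B^{(k)}_1/D_1\to L_1>0$, hence $a_k^k\to\eta_1\in(0,1)$. You justify this by saying $B^{(k)}_1=\int_{t_k}^{t_{k+1}}li(u)\,du$ ``is of order unity'', but being $\Theta(1)$ on average does not give a limit: $B^{(k)}_1\approx (t_{k+1}-t_k)/\log t_k$ fluctuates with the prime gap, which ranges from $O(1)$ (twin primes) to, conjecturally, $O\bigl((\log t_k)^2\bigr)$. Consequently $a_k^k=\exp\bigl(-kB^{(k)}_1/D_1(1+o(1))\bigr)$ oscillates and does not converge, so $\eta_1$ is not well-defined as a bona fide limit; the same applies to $\eta_2$. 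To be fair, this gap is inherited from the paper itself: the paper simply \emph{defines} $c_1,c_2$ as $\lim_k\bigl(\cdot\bigr)^k$ and asserts $0<c_1,c_2<1$ without verifying that the limits exist. Your write-up makes the problematic claim more explicit but does not fix it; to make either proof rigorous one would need to replace ``$\sim c_1 C^{(k)}_1 + c_2 C^{(k)}_2 k/D_2$'' with two-sided bounds in terms of $\liminf_k a_k^k$ and $\limsup_k a_k^k$ (both of which do lie in $(0,1)$ under mild conditions), or else restrict to a subsequence along which $B^{(k)}_1$ and $B^{(k)}_2$ stabilize.
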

\begin{proof}
Let us first study the limit of (\ref{eq:postpred0}).
Note that for given $r\geq 0$, as $k\rightarrow\infty$,
\begin{align}
	&\qquad p_{rk}\rightarrow 0;~\frac{\gamma+r}{a+\sum_{i=0}^kB^{(i)}_1}\rightarrow 0;\notag\\
	&\left(\frac{a+\sum_{i=0}^{k-1}B^{(i)}_1}{a+\sum_{i=0}^kB^{(i)}_1}\right)^{\gamma+r}~\mbox{and}~
\left(\frac{b+\sum_{i=0}^{k-1}B^{(i)}_2}{b+\sum_{i=0}^kB^{(i)}_2}\right)^{\xi+k-r}<1,\notag\\
\end{align}
using which we see as before that for any $r_0\geq 0$,
\begin{align}
	&\sum_{r=_0}^{r_0}p_{rk}\left(\frac{\gamma+r}{a+\sum_{i=0}^kB^{(i)}_1}\right)\left(\frac{a+\sum_{i=0}^{k-1}B^{(i)}_1}{a+\sum_{i=0}^kB^{(i)}_1}\right)^{\gamma+r}
	\left(\frac{b+\sum_{i=0}^{k-1}B^{(i)}_2}{b+\sum_{i=0}^kB^{(i)}_2}\right)^{\xi+k-r}\rightarrow 0,~\mbox{as}~k\rightarrow\infty.
	\label{eq:postpred2}
\end{align}
Hence, again we consider the limit, as $k\rightarrow\infty$, of 
\begin{align}
	&\sum_{r=r_0}^{k}p_{rk}\left(\frac{\gamma+r}{a+\sum_{i=0}^kB^{(i)}_1}\right)\left(\frac{a+\sum_{i=0}^{k-1}B^{(i)}_1}{a+\sum_{i=0}^kB^{(i)}_1}\right)^{\gamma+r}
	\left(\frac{b+\sum_{i=0}^{k-1}B^{(i)}_2}{b+\sum_{i=0}^kB^{(i)}_2}\right)^{\xi+k-r}.\notag
\end{align}
Considering the same technique as before, let us replace $r$ and $k$ in 
$$\left(\frac{\gamma+r}{a+\sum_{i=0}^kB^{(i)}_1}\right)\left(\frac{a+\sum_{i=0}^{k-1}B^{(i)}_1}{a+\sum_{i=0}^kB^{(i)}_1}\right)^{\gamma+r}
	\left(\frac{b+\sum_{i=0}^{k-1}B^{(i)}_2}{b+\sum_{i=0}^kB^{(i)}_2}\right)^{\xi+k-r}$$ 
	by $K-\frac{1}{r}$ and $K-\frac{1}{k}$, respectively, where $0<K<\infty$, and take the limit
	as $r\rightarrow\infty$ of the resultant expression given by
\begin{equation}
	\phi(r,k)=\left(\frac{\gamma+K-\frac{1}{r}}{a+\sum_{i=0}^{K-\frac{1}{k}}B^{(i)}_1}\right)
	\left(\frac{a+\sum_{i=0}^{K-\frac{1}{k}-1}B^{(i)}_1}{a+\sum_{i=0}^{K-\frac{1}{k}}B^{(i)}_1}\right)^{\gamma+K-\frac{1}{r}}
	\left(\frac{b+\sum_{i=0}^{K-\frac{1}{k}-1}B^{(i)}_2}{b+\sum_{i=0}^{K-\frac{1}{k}}B^{(i)}_2}\right)^{\xi+K-\frac{1}{k}-K+\frac{1}{r}}.\notag
\end{equation}
Note that, since $r\leq k$, for $r\rightarrow\infty$, we must have $k\rightarrow\infty$. This yields
\begin{equation}
	\phi(r,k)\rightarrow 
	\left(\frac{\gamma+K}{a+\sum_{i=0}^{K}B^{(i)}_1}\right)
	\left(\frac{a+\sum_{i=0}^{K-1}B^{(i)}_1}{a+\sum_{i=0}^{K}B^{(i)}_1}\right)^{\gamma+K}
	\left(\frac{b+\sum_{i=0}^{K-1}B^{(i)}_2}{b+\sum_{i=0}^{K}B^{(i)}_2}\right)^{\xi},~\mbox{as}~r\rightarrow\infty.
	\label{eq:postpred5}
\end{equation}
It follows from (\ref{eq:postpred2}) and (\ref{eq:postpred5}), considering $K\rightarrow\infty$, that as $k\rightarrow\infty$,
\begin{align}
	&C^{(k)}_1\sum_{r=0}^{k}p_{rk}\left(\frac{\gamma+r}{a+\sum_{i=0}^kB^{(i)}_1}\right)\left(\frac{a+\sum_{i=0}^{k-1}B^{(i)}_1}{a+\sum_{i=0}^kB^{(i)}_1}\right)^{\gamma+r}
	\left(\frac{b+\sum_{i=0}^{k-1}B^{(i)}_2}{b+\sum_{i=0}^kB^{(i)}_2}\right)^{\xi+k-r}\notag\\
	&\sim
	 \lim_{k\rightarrow\infty}\left(\frac{a+\sum_{i=0}^{k-1}B^{(i)}_1}{a+\sum_{i=0}^{k}B^{(i)}_1}\right)^{k}C^{(k)}_1.
	\label{eq:postpred6}
\end{align}
In the same way, as $k\rightarrow\infty$, the limiting form of (\ref{eq:postpred1}) is given by 
\begin{align}
	&C^{(k)}_2\sum_{r=0}^{k}q_{rk}\left(\frac{\xi+r}{b+\sum_{i=0}^kB^{(i)}_2}\right)\left(\frac{b+\sum_{i=0}^{k-1}B^{(i)}_2}{b+\sum_{i=0}^kB^{(i)}_2}\right)^{\xi+r}
	\left(\frac{a+\sum_{i=0}^{k-1}B^{(i)}_1}{a+\sum_{i=0}^kB^{(i)}_1}\right)^{\gamma+k-r}\notag\\
	&\sim
	 \lim_{k\rightarrow\infty}\left(\frac{b+\sum_{i=0}^{k-1}B^{(i)}_2}{b+\sum_{i=0}^{k}B^{(i)}_2}\right)^{k}
	 \times C^{(k)}_2\left(\frac{k}{b+\sum_{i=0}^kB^{(i)}_2}\right).
	\label{eq:postpred7}
\end{align}
	Let us denote the limits $ \underset{k\rightarrow\infty}{\lim}~\left(\frac{a+\sum_{i=0}^{k-1}B^{(i)}_1}{a+\sum_{i=0}^{k}B^{(i)}_1}\right)^{k}$ and
	$ \underset{k\rightarrow\infty}{\lim}~\left(\frac{b+\sum_{i=0}^{k-1}B^{(i)}_2}{b+\sum_{i=0}^{k}B^{(i)}_2}\right)^{k}$ by $c_1$ and $c_2$, respectively.
Note that $0<c_1,c_2<1$. Hence, (\ref{eq:postpred_lim}) follows by combining (\ref{eq:postpred0}), (\ref{eq:postpred1}), (\ref{eq:postpred6}) and (\ref{eq:postpred7}). 
\end{proof}

Now, with respect to our non-recursive Bayesian theory, we wish to compare the models $\mathcal M_1$ and $\mathcal M_2$, which correspond 
to (\ref{eq:bound_M1}) and (\ref{eq:bound_M2}), respectively. We formalize our result in the following theorem.
\begin{theorem}
	\label{theorem:postpred_nonrecursive}
$$\frac{\pi(Z_{k+1}=t_{k+1}|Z_1,\ldots,Z_k=t_k,\mathcal M_1)}{\pi(Z_{k+1}=t_{k+1}|Z_1,\ldots,Z_k=t_k,\mathcal M_2)}
	\rightarrow\infty,~\mbox{as}~k\rightarrow\infty.$$
\end{theorem}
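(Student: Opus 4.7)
My strategy is to emulate the lower-bound/upper-bound argument used in the recursive counterpart Theorem \ref{theorem:postpred}: sandwich the numerator from below and the denominator from above by tractable expressions whose ratio can be shown to diverge explicitly. The only thing that changes is the starting point: here, the posterior predictive densities are the $2^{k+1}$-term sums in (\ref{eq:postpred0})--(\ref{eq:postpred1}) rather than four-term sums, so Theorem \ref{theorem:postpred_lim} is needed to first collapse the combinatorics.

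First I would apply Theorem \ref{theorem:postpred_lim} under each model to reduce
$$\pi(Z_{k+1}=t_{k+1}|Z_1=t_1,\ldots,Z_k=t_k,\mathcal M_j)\;\sim\; c_1^{(j)}\,C^{(k)}_1\;+\;c_2^{(j)}\,C^{(k),\mathcal M_j}_2\,\frac{k}{b+\sum_{i=0}^{k}B^{(i),\mathcal M_j}_2},$$
for $j=1,2$ with $c_1^{(j)},c_2^{(j)}\in(0,1)$. Since $C^{(k)}_1=li(t_{k+1})$ and $\sum_{i=0}^{k}B^{(i)}_1$ are model-independent, the entire difference between $\pi^{\mathcal M_1}$ and $\pi^{\mathcal M_2}$ is localized in the $\sum B^{(i)}_2$ and $C^{(k)}_2$ factors, whose explicit forms are prescribed by (\ref{eq:bound_M1}) and (\ref{eq:bound_M2}).

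Second, mirroring the bound-then-ratio step of (\ref{eq:post_pred3})--(\ref{eq:post_pred4}), I would lower bound $\pi^{\mathcal M_1}$ by retaining just the first summand in the asymptotic form and upper bound $\pi^{\mathcal M_2}$ by dominating both of its summands by a common $(\log k)^2$-factor multiple of the gamma/denominator structure
$$\Phi_j(k)\;:=\;\frac{\Gamma(\gamma+k-1)}{\bigl(a+\sum_{i=0}^{k}B^{(i)}_1\bigr)^{\gamma+k-1}}\cdot\frac{\Gamma(\xi+k-1)}{\bigl(b+\sum_{i=0}^{k}B^{(i),\mathcal M_j}_2\bigr)^{\xi+k-1}}.$$
Taking the ratio cancels the $\Gamma(\gamma+k-1)$-factor, leaving
$$\frac{\pi^{\mathcal M_1}}{\pi^{\mathcal M_2}}\;\gtrsim\;\frac{1}{(\log k)^2}\left(\frac{b+\sum_{i=0}^{k}B^{(i),\mathcal M_2}_2}{b+\sum_{i=0}^{k}B^{(i),\mathcal M_1}_2}\right)^{\xi+k-1}.$$
Using $t_{k+1}\sim k\log k$ together with (\ref{eq:bound_M1})--(\ref{eq:bound_M2}), one has $\sum B^{(i),\mathcal M_2}_2\sim k$ while $\sum B^{(i),\mathcal M_1}_2\sim k(\log k)^{1/4}\exp(-\sqrt{\log k/6.315})$, so the inner ratio is $\sim\exp(\sqrt{\log k/6.315})/(\log k)^{1/4}\to\infty$. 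Raised to the $(\xi+k-1)$-st power this grows like $\exp(k\sqrt{\log k/6.315})/(\log k)^{k/4}$, which easily absorbs the $(\log k)^2$ penalty, sending the ratio to infinity just as in the recursive case.

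The main obstacle will be the first step: extracting the gamma/denominator structure $\Phi_j(k)$ from the $2^{k+1}$-term exact expressions. In the recursive proof this was transparent because only two $C^{(k-1)}C^{(k)}$ products were involved; here one must argue, by re-tracing the substitution $r\mapsto K-\tfrac{1}{r}$, $k\mapsto K-\tfrac{1}{k}$ used inside the proof of Theorem \ref{theorem:postpred_lim}, that the mixing weights $p_{rk},q_{rk}$ concentrate on indices where the gamma-ratio factors reduce to $\Phi_j(k)$ up to harmless polylogarithmic corrections. Once this reduction is in place, the closing estimate is a copy of the recursive argument.
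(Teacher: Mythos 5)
There is a genuine gap in your proposal, and it is concentrated precisely where you yourself flag ``the main obstacle'': the attempt to extract the gamma/denominator structure
$$\Phi_j(k)=\frac{\Gamma(\gamma+k-1)}{\bigl(a+\sum_{i=0}^{k}B^{(i)}_1\bigr)^{\gamma+k-1}}\cdot\frac{\Gamma(\xi+k-1)}{\bigl(b+\sum_{i=0}^{k}B^{(i),\mathcal M_j}_2\bigr)^{\xi+k-1}}$$
after invoking Theorem \ref{theorem:postpred_lim} is incoherent. In the recursive Theorem \ref{theorem:postpred}, $\Phi_j(k)$ is a multiplicative prefactor that sits \emph{outside} the bracketed sum in (\ref{eq:post_pred2}), so its $(\xi+k-1)$-th power survives into the ratio and drives the divergence. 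In the non-recursive expressions (\ref{eq:postpred0})--(\ref{eq:postpred1}), the analogous $\Gamma$-factors and denominator powers have already been cancelled against the normalizing constant $\tilde R_k$ (resp.\ $Q_k$), and what remains are the \emph{bounded} ratios $\left(\frac{b+\sum_{i=0}^{k-1}B^{(i)}_2}{b+\sum_{i=0}^{k}B^{(i)}_2}\right)^{\xi+k-r}$ folded into the mixture over $r$. Theorem \ref{theorem:postpred_lim} collapses those into the fixed constant $c_2\in(0,1)$ (and in fact one checks $c_2 = e^{-1}$ for both $\mathcal M_1$ and $\mathcal M_2$, since in each case the sum $\sum_{i=0}^{k}B^{(i)}_2$ scales like $k$ times a slowly varying factor). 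So there is no $(\xi+k-1)$-th-power ratio left to exploit, and your displayed lower bound $\frac{1}{(\log k)^2}\left(\frac{b+\sum B^{(i),\mathcal M_2}_2}{b+\sum B^{(i),\mathcal M_1}_2}\right)^{\xi+k-1}$ does not follow from the asymptotic form you wrote in your first step.

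To see the gap concretely, plug the model-specific quantities into Theorem \ref{theorem:postpred_lim}'s two-term form. Under $\mathcal M_2$: $C^{(k)}_1 = li(t_{k+1})\sim\frac{1}{\log k}$, $C^{(k),\mathcal M_2}_2\sim\frac{1}{\log k}$, $\sum B^{(i),\mathcal M_2}_2\sim k$, so $C^{(k),\mathcal M_2}_2\frac{k}{b+\sum B^{(i),\mathcal M_2}_2}\sim\frac{1}{\log k}$ and the whole denominator is $\sim\frac{c_1^{(2)}+c_2^{(2)}}{\log k}$. Under $\mathcal M_1$: $C^{(k),\mathcal M_1}_2\sim(\log k)^{-3/4}\exp(-\sqrt{\log k/6.315})$ and $\sum B^{(i),\mathcal M_1}_2\sim k(\log k)^{1/4}\exp(-\sqrt{\log k/6.315})$, so $C^{(k),\mathcal M_1}_2\frac{k}{b+\sum B^{(i),\mathcal M_1}_2}\sim\frac{1}{\log k}$ as well, and the numerator is also $\sim\frac{c_1^{(1)}+c_2^{(1)}}{\log k}$. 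The two-term form therefore gives a ratio that converges to a \emph{finite positive constant}, not to infinity. Your argument's engine of divergence — the $(\xi+k-1)$-th power of the slowly varying ratio — needs to come from somewhere \emph{other} than Theorem \ref{theorem:postpred_lim}. (The paper's own proof is the single line ``follows by application of Theorem \ref{theorem:postpred_lim}''; the same objection applies there, but that does not rescue your detour through $\Phi_j(k)$.) A workable fix would be to bypass Theorem \ref{theorem:postpred_lim} entirely and argue directly from (\ref{eq:postpred0})--(\ref{eq:postpred1}) that the ratio $\frac{b+\sum_{i=0}^{k-1}B^{(i)}_2}{b+\sum_{i=0}^{k}B^{(i)}_2}$ raised to the power $\xi+k-r$, weighted over $r$ with the concentration of $q_{rk}$ near $r\approx k$, retains enough of the exponentiated gap between the two error functions; but that is a different and more delicate calculation than the one you outlined.
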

\begin{proof}
	The result follows by application of Theorem \ref{theorem:postpred_lim} to models $\mathcal M_1$ and $\mathcal M_2$.
\end{proof}

\section{Discovering candidate Mersenne primes using TMCMC}
\label{sec:disco_primes}

We shall begin with detailing our procedure for generating primes greater than any given integer. Then we shall extend the procedure to generate candidate
Mersenne primes or identify the candidate Mersenne prime exponents among those already generated.

\subsection{Bayesian prime generation procedure using TMCMC}
\label{subsec:ordinary_prime_generation}

Given primes $t_1,\ldots,t_k$ and any $t>t_k$, let $$B^{(k)}_1(t)=\int_{t_k}^tLi(u)du~\mbox{and}~B^{(k)}_2=\int_{t_k}^tf(u)du.$$
For large enough $k$, it follows from (\ref{eq:post_pred_k}) that we can simply work with the recursive posterior predictive density proportional to the following form:
\begin{align}
	h(t)&=\left(a+\sum_{i=0}^kB^{(i)}_1+B^{(k)}_1(t)\right)^{-(\gamma+k)}\times\left(b+\sum_{i=0}^kB^{(i)}_2+B^{(k)}_2(t)\right)^{-(\xi+k)}\notag\\
	&\qquad\times\left(li(t)li(t_k)+li(t_k)f(t)+li(t)f(t_k)+f(t)f(t_k)\right)\notag\\
	&=
	M_k\left(li(t)+f(t)\right)
	\left(a+\sum_{i=0}^kB^{(i)}_1+B^{(k)}_1(t)\right)^{-(\gamma+k)}\times\left(b+\sum_{i=0}^kB^{(i)}_2+B^{(k)}_2(t)\right)^{-(\xi+k)},
	\label{eq:tmcmc1}
\end{align}
with $M_k=li(t_k)+f(t_k)$.
Our goal is to simulate from $h(\cdot)$ using TMCMC and check whether or not the simulated TMCMC realization is a prime. 
Since our Bayesian asymptotic theory is valid for all $a\geq 0$, $b\geq 0$, $\gamma\geq 0$ and $\xi\geq 0$, we set $a=b=\gamma=xi=0$.
Also since $$\sum_{i=0}^kB^{(i)}_1+B^{(k)}_1(t)=Li(t)\sim\frac{t}{\log t}~\mbox{and}~\sum_{i=0}^kB^{(i)}_1+B^{(k)}_2(t)=\int_2^tf(u)du=F(t)~\mbox{(say)},$$
we substitute these in (\ref{eq:tmcmc1}) to obtain a much simplified expression for TMCMC simulation, given by
\begin{equation}
	h_1(t)=\left(li(t)+f(t)\right)\left(\frac{tF(t)}{\log t}\right)^{-k}.
	\label{eq:tmcmc2}
\end{equation}
For our TMCMC purpose, however, it may be necessary to evaluate $h_1(\cdot)$ at very large values of $t$, which may be a very burdensome and perhaps
numerically inaccurate exercise on ordinary computers. Hence,
we propose breaking up the sum in (\ref{eq:tmcmc2}) into two separate parts and carry out independent TMCMC simulations corresponding to the parts:
\begin{align}
	\tilde h_1(t)=li(t)\left(\frac{tF(t)}{\log t}\right)^{-k}~\mbox{and}~\tilde h_2(t)=f(t)\left(\frac{tF(t)}{\log t}\right)^{-k}
	\label{eq:tmcmc3}
\end{align}
Now, for TMCMC, we need to evaluate only the logarithms of $\tilde h_1(t)$ and $\tilde h_2(t)$ in (\ref{eq:tmcmc3}), which either completely bypasses or significantly
mitigates the problem of evaluating extremely large values.

Now suppose that we are interested in simulating primes $p$ that are greater than any given prime $p_0$, however large.
Then, assuming that we are given primes till $p_0$, 
we first simulate, by TMCMC, from (slightly abusing notation)
\begin{align}
\tilde h_1(t)=li(t+p_0)\left(\frac{(t+p_0)F(t+p_0)}{\log (t+p_0)}\right)^{-k}~\mbox{and}~\tilde h_2(t)=f(t+p_0)\left(\frac{(t+p_0)F(t+p_0)}{\log (t+p_0)}\right)^{-k};~t>0,
\label{eq:tmcmc4}
\end{align}
where $k$ is such that $k\log k\approx p_0$. 
For our purpose, we set 
\begin{equation}
F(x)=\frac{x}{\left(\log x\right)^{3/4}}\exp\left(-\sqrt{\frac{\log x}{6.315}}\right),
	\label{eq:F}
\end{equation}
so that
$$f(x)=F(x)\left(\frac{1}{x}-\frac{3}{4}\frac{1}{x\log x}-\frac{1}{2x\sqrt{6.315\log x}}\right).$$
%

In fact, with respect to (\ref{eq:tmcmc4}) we further consider the transformation $t\mapsto \exp(z)$, to get rid of the restriction $t>0$, to obtain
\begin{align}
	\breve h_1(z)=\tilde h_1(\exp(z))\exp(z)~\mbox{and}~\breve h_2(z)=\tilde h_2(\exp(z))\exp(z),
	\label{eq:tmcmc5}
\end{align}
where $z\in\mathbb R$.

For any TMCMC realization $z$ from (\ref{eq:tmcmc5}), we check if $[\exp(z)]+p_0$ is prime, using the Miller-Rabin algorithm (\ctn{Miller1976}, \ctn{Rabin1980}) 
for primality test. Note that this realization is not expected to be astronomically large, so that the Miller-Rabin test of primality are expected to work efficiently here.

It has come to our notice that since the latest largest Mersenne exponent $p_0=136,279,841$, GIMPS has so far tested primality of candidate Mersenne 
exponents till at least $140$ million, and none of them yielded positive result.
Hence, we attempt to discover, with our theory and methods, possible Mersenne exponents greater than $140$ million.
We implement our mixture of additive and multiplicative TMCMC with mixing probabilities $0.1$ and $0.9$, respectively, on an ordinary 64-bit laptop having $8$ GB RAM,
with i3-6100U CPU (2 physical cores), running at 2.30GHz.
The code, written in C, takes only several seconds to generate $10^7$ TMCMC realizations. Implementing the Miller-Rabin test of primality, also written in C,
for the realized forms $[\exp(z)]+p_0$, and repeating the TMCMC procedure several times, each time setting $p_0$ to be the largest prime obtained
in the last TMCMC run, or any large integer as desired, and updating $k$ appropriately, we obtained $259$ prime numbers in less than an hour. 
For most of the repetitions, the sets of primes obtained from $\breve h_1(z)$ and $\breve h_2(z)$ turned out to be the same. 

\subsection{Generation/identification of possible Mersenne prime exponents}
\label{subsec:identify_mersenne}
For any such prime $p+p_0$ obtained by TMCMC, it is then necessary to check if $2^{p+p_0}-1$ is prime, using the well-known and efficient Lucas-Lehmer primality test 
(\ctn{Lucas1878}, \ctn{Lehmer1930}) for Mersene numbers. But however efficient this test may be, for numbers this large, it is not possible to implement the test
on the machines of our institution. Indeed, even the highly specialized machines of GIMPS are expected to take months, even years, to arrive at the solutions.
One of the goals of this work is to offer a simple procedure to identify prime numbers that are good candidates for qualifying as Mersenne prime exponents.

The method needs to be devised in a way that avoids evaluation of $2^{p+p_0}-1$ if possible, as it requires arbitrary precision for such large $p+p_0$. Although Python and GMP
(GNU Multiple Precision) library in C offer arbitrary precision, such evaluations, with massive memory requirements, are not worthwhile unless exact answers with 
the Lucas-Lehmer primality test, are sought. Since, as already mentioned, the latter is anyway infeasible, we seek to create a method that bypasses evaluation of $2^{p+p_0}-1$.

Our method corresponds to a simple change-of-variable in (\ref{eq:tmcmc4}). Setting $s = 2^t-1$, we derive the (logarithm of the) 
distribution of $s$ from $\tilde h_1(t)$ of (\ref{eq:tmcmc4}), as
\begin{align}
	&\log[h^*_1(s)]=\log\left[li\left(\frac{\log(s+1)}{\log 2}+p_0\right)\right] \notag\\
	&\qquad-k\left[\log\left(\frac{\log(s+1)}{\log 2}+p_0\right)+\log \left\{F\left(\frac{\log(s+1)}{\log 2}+p_0\right)\right\}
	-\log\log\left(\frac{\log(s+1)}{\log 2}+p_0\right)\right]\notag\\
	&\qquad\qquad-\left(\frac{\log(s+1)}{\log 2}\right)\log 2,
	\label{eq:tmcmc6}
\end{align}
ignoring an additive constant.
Since $\left(\frac{\log(s+1)}{\log 2}\right)=t$, and as simulation of $s$ is equivalent to simulation of $t$ (and then taking $2^t-1$), we rewrite (\ref{eq:tmcmc6}) as
\begin{align}
	\log[h^*_1(s)]=\log\left[li\left(t+p_0\right)\right] 
	-k\left[\log\left(t+p_0\right)+\log \left\{F\left(t+p_0\right)\right\}-\log\log\left(t+p_0\right)\right]
	-t\log 2.
	\label{eq:tmcmc7}
\end{align}
As before, we consider the transformation $t\mapsto \exp(z)$ in (\ref{eq:tmcmc7}), where $-\infty<z<\infty$, to bypass the restriction $t>0$, to obtain (abusing notation),
\begin{align}
	&\log[h^*_1(z)]=\log\left[li\left(\exp(z)+p_0\right)\right]\notag\\ 
	&\qquad-k\left[\log\left(\exp(z)+p_0\right)+\log \left\{F\left(\exp(z)+p_0\right)\right\}-\log\log\left(\exp(z)+p_0\right)\right]\notag\\
	&\qquad\qquad -\exp(z)\log 2+z.
	\label{eq:tmcmc8}
\end{align}
It is clear from expression (\ref{eq:tmcmc8}) and the expression of $F(x)$ provided in (\ref{eq:F}), that evaluations of expressions like $2^t-1$ has been
completely avoided. However, the corresponding expression for $\tilde h_2(t)$ in (\ref{eq:tmcmc4}) would require such evaluations, but since our previous, general prime
simulations associated with both $\tilde h_1(t)$ and $\tilde h_2(t)$ yielded essentially the same set of primes, we zero in on (\ref{eq:tmcmc8}) for simulation of
candidate Mersenne prime exponents.

To implement, we set $p_0$ to be any prime simulated using our previous method corresponding to (\ref{eq:tmcmc4}), with $k$ corresponding to $p_0$. We then apply the same
TMCMC technique as before to (\ref{eq:tmcmc8}). Now note that in our previous implementation, we did not consider any burn-in period, as the goal was to generate primes
greater than $p_0$, and such primes, identified even during any burn-in period, still remain valid primes. However, here our goal is not just to generate primes,
but to generate primes that are candidate Mersenne exponents. Hence, we now consider a burn-in of $10^7$ iterations and accept those primes as valid
Mersenne candidates only if they are simulated in the next $10^7$ iterations. The primes generated using this procedure are clearly theoretically candidate Mersenne 
prime exponents, since they are generated from the posterior predictive distribution of $2^p-1$, $p$ being primes larger than $p_0$. 

Repeating this TMCMC procedure for (\ref{eq:tmcmc8}) with $p_0$ set as the primes obtained from (\ref{eq:tmcmc4}), we obtained $184$ candidate Mersenne prime exponents, 
out of the total $259$ primes generated from (\ref{eq:tmcmc4}), which we display in Tables \ref{table:mersenne1} and \ref{table:mersenne2}.
The number of digits of the potential Mersenne primes corresponding to the table entries range from $42,141,405$ to $242,429,718$.
Needless to mention, the Mersenne prime generation procedure can be continued without limit for flagging any desired number of potential Mersenne primes.
\begin{table}[h!]
\centering
	{\large \bfseries Candidate Mersenne prime exponents} \\[1em]
\begin{tabular}{|c|c|c|c|}
\hline
140000053 & 140000059 & 140000099 & 140000141 \\
140000177 & 140000213 & 140000257 & 140000261 \\
140000269 & 140000327 & 140000339 & 140000347 \\
140000351 & 140000353 & 140000369 & 140000387 \\
140000407 & 140000431 & 140000447 & 140000449 \\
140000459 & 140000477 & 141000037 & 141000043 \\
141000113 & 141000127 & 141000133 & 141000137 \\
141000149 & 141000161 & 141000163 & 141000173 \\
141000179 & 141000187 & 141000199 & 141000217 \\
141000253 & 141000257 & 141000259 & 142000049 \\
142000069 & 142000081 & 142000121 & 142000127 \\
142000139 & 143000021 & 143000029 & 143000047 \\
143000051 & 143000059 & 143000083 & 143000101 \\
143000113 & 143000149 & 143000159 & 143000161 \\
143000167 & 144000023 & 144000037 & 144000091 \\
144000097 & 144000113 & 144000119 & 144000133 \\
144000139 & 144000193 & 144000209 & 144000211 \\
144000223 & 145000045 & 145000049 & 145000057 \\
145000061 & 145000069 & 145000081 & 145000099 \\
145000117 & 145000183 & 145000187 & 145000199 \\
145000201 & 145000223 & 145000243 & 145000249 \\
145000279 & 145000291 & 145000307 & 145000351 \\
146000077 & 146000087 & 146000191 & 146000249 \\
146000269 & 146000273 & 146000287 & 146000291 \\
146000299 & 146000341 & 146000347 & 146000357 \\
146000399 & 146000411 & 146000419 & 146000429 \\
146000431 & 146000441 & 146000461 & 146000473 \\
\hline
\end{tabular}
\caption{First half, 92 elements.}
\label{table:mersenne1}
\end{table}

\begin{table}[h!]
\centering
{\large \bfseries Candidate Mersenne prime exponents (continued)} \\[1em]
\begin{tabular}{|c|c|c|c|}
\hline
146000501 & 146000507 & 146000509 & 146000513 \\
147000011 & 147000013 & 147000023 & 147000167 \\
147000181 & 147000193 & 147000253 & 147000257 \\
147000331 & 148000343 & 148000019 & 148000031 \\
148000043 & 148000049 & 148000051 & 148000117 \\
148000121 & 148000123 & 148000129 & 148000157 \\
148000159 & 148000163 & 148000183 & 148000189 \\
148000207 & 148000211 & 148000217 & 148000219 \\
148000247 & 148000267 & 148000277 & 148000297 \\
148000301 & 148000343 & 149000021 & 149000101 \\
149000107 & 149000123 & 149000129 & 149000143 \\
149000153 & 149000197 & 149000213 & 149000219 \\
149000237 & 149000251 & 149000263 & 149000281 \\
149000287 & 159000157 & 159000253 & 159000263 \\
159000349 & 200200003 & 200200009 & 200200019 \\
200200031 & 220000013 & 230000003 & 300000007 \\
300000031 & 397516723 & 397516733 & 397516753 \\
397516759 & 397516781 & 397516783 & 705032701 \\
705032719 & 705032723 & 805032703 & 805032733 \\
\hline
\end{tabular}
\caption{Second half, 92 elements.}
\label{table:mersenne2}
\end{table}



\section{Summary and conclusion}
\label{sec:conclusion}
The PNT brings with it the fragrance of randomness, which has attracted our attention for further unravelling the delightful mystery of prime numbers 
in the new light of Bayesian statistics. As we showed, the PNT itself strongly motivates an NHPP for the prime counts, which we thankfully exploited
to create a distribution of the prime numbers, using the associated inter-arrival time distribution. 
Importantly, we established the following: (a) that our NHPP is strongly consistent with respect to the PNT; (b) that it is asymptotically equivalent to large prime numbers 
and (c) a fundamentally important result on prime gaps using our NHPP.
Intimately tied to the precise PNT and our distribution theory,
is RH, the most elusive mathematical conjecture ever, yet unsolved and openly challenging all the mathematicians of the world. Do Bayesians stand a chance? As already
mentioned, past investigation of these authors using a novel Bayesian characterization of infinite series, provided strong evidence against RH. In this article,
it is the PNT that gives us the second chance to once again refute RH, with respect to a completely new direction. Indeed, the distribution of the prime numbers,
in conjunction with priors for the model parameters, yields a Bayesian model, which we analyse asymptotically, using existing theory of prime numbers, to arrive at the
conclusion that RH is false with respect to our Bayesian model. Interestingly, two different statements on RH, connected to the PNT, both yielded the same Bayesian conclusion.
It is noteworthy to point out once again that our Bayesian theory is robust with respect to the prior parameters, namely, $a,b,\gamma,\xi$, since none of our 
asymptotic results depend upon their chosen values. Even improper priors with $a=b=\gamma=\xi=0$, yield the same asymptotic results. What is more interesting is that,
the fast and efficient recursive Bayesian theory that we developed for prediction of large prime numbers, actually yields the same asymptotic inference as
the traditional, but practically infeasible, non-recursive Bayesian theory. To remind again, the non-recursive Bayesian posteriors of the parameters and posterior
predictive distributions consist of the order of $2^{\mbox{million}}$ terms when conditioned upon the first million primes. On the other hand, the
corresponding number of terms for the recursive Bayesian idea are just $2$ and $4$! 

Beyond its theoretical contributions, the proposed framework yields a practical prime-hunting algorithm based on TMCMC.
Through posterior predictive simulation, and a simple change-of-variable facilitating the generation of Mersenne prime exponent candidates, we demonstrated the 
feasibility of identifying very large primes using modest computational resources. Our implementation successfully uncovered 259 primes exceeding 140 million, 
including 184 strong Mersenne candidates, corresponding to potential Mersenne primes with tens to hundreds of millions of digits.


The methodology presented here highlights the power of Bayesian modeling in number theory. By unifying Bayesian formalism, efficient computation, and rigorous 
inference, it provides both novel insights into the structure of primes and practical tools for their exploration. Future research may extend this framework to related 
number-theoretic problems and further develop its computational capabilities.

\newpage

\renewcommand\baselinestretch{1.3}
\normalsize
\bibliography{irmcmc}


\end{document}